\def\ps@pprintTitle{%
	\let\@oddhead\@empty
	\let\@evenhead\@empty
	\let\@oddfoot\@empty
	\let\@evenfoot\@oddfoot
}
\newtheorem{theorem}{Theorem}[section]
\theoremstyle{definition}
\Crefname{equation}{Eq.}{Eqns.}
\newcommand{\revise}[1]{{#1}}
    \newcommand{\ed}[1]{\textcolor{red}{\scriptsize #1}}
    \newcommand{\ed}[1]{}    
\newtheorem{remark}{Remark}
\newtheorem{example}{Example}
\pgfplotsset{compat=newest}
\newcommand{\bx}[0]{\mathbf{x}}
\newcommand{\bB}[0]{\mathbf{B}}
\newcommand{\RR}[0]{\mathbb{R}}
\begin{document}

\begin{abstract}
Deep neural networks (DNN) have been used to model nonlinear relations between physical quantities. Those DNNs are embedded in physical systems described by partial differential equations (PDE) and trained by minimizing a loss function that measures the discrepancy between predictions and observations in some chosen norm. This loss function often includes the PDE constraints as a penalty term when only sparse observations are available. As a result, the PDE is only satisfied approximately by the solution. However, the penalty term typically slows down the convergence of the optimizer for stiff problems. We present a new approach that trains the embedded DNNs while numerically satisfying the PDE constraints. We develop an algorithm that enables differentiating both explicit and implicit numerical solvers in reverse-mode automatic differentiation. This allows the gradients of the DNNs and the PDE solvers to be computed in a unified framework. We demonstrate that our approach enjoys faster convergence and better stability in relatively stiff problems compared to the penalty method. Our approach allows for the potential to solve and accelerate a wide range of data-driven inverse modeling, where the physical constraints are described by PDEs and need to be satisfied accurately.   
\end{abstract}

\begin{keyword}
Physics-based Machine Learning, Deep Neural Networks, Inverse Problems
\end{keyword}

\begin{frontmatter}

\title{Physics Constrained Learning for Data-driven Inverse Modeling from Sparse Observations}

\author[rvt1]{Kailai~Xu}
\ead{kailaix@stanford.edu}

\author[rvt1,rvt2]{Eric~Darve}
\ead{darve@stanford.edu}

\address[rvt1]{Institute for Computational and Mathematical Engineering, Stanford University, Stanford, CA, 94305}
\address[rvt2]{Mechanical Engineering, Stanford University, Stanford, CA, 94305}

\end{frontmatter}

\section{Introduction}
Models involving partial differential equations (PDE) are usually used for describing physical phenomena in science and engineering. Inverse problems \cite{isakov2006inverse} aim at calibrating unknown parameters in the models based on observations associated with the output of the models. In real-world inverse problems, the observed data are usually ``sparse'' because only part of the model outputs are observable. Examples of inverse modeling from sparse observations include learning subsurface properties from seismic data on the earth's surface \cite{virieux2017introduction} and learning constitutive relations from surface deformations of 3D solid materials \cite{grediac2012full}. The mapping from partial observations to unknown parameters is usually indirect and hard to compute, and in some cases, the inverse problems are ill-conditioned \cite{akccelik2006parallel}. Recently, neural networks have been applied to learn unknown relations in inverse modeling \cite{huang2019learning,tartakovsky2018learning,meng2019composite}, which is a more challenging task since training the neural network requires differentiating both neural network and PDEs for extracting gradients.

Mathematically, we can formulate the inverse problem as a PDE-cons\-trained optimization problem
\begin{gather}\label{equ:inverse}
    \min_{\theta}\ L(u) = \sum_{i\in \mathcal{I}_{\mathrm{obs}}} \left(u(\bx_i) - u_i\right)^2 \\
    \mathrm{s.t.}\;\; F(\theta, u) = 0 \notag
\end{gather}
where $L$ is the \textit{loss function} that measures the discrepancy between estimated outputs $u$ and observed outputs $u_i$ at locations $\{\bx_i\}$. $\mathcal{I}_{\mathrm{obs}}$ is the set of indices of locations where observations are available. $F$ is the physical constraint, usually described by a system of PDEs, and $\theta$ is the finite or infinite dimensional unknown parameter.  In the case that $\theta$ is infinite dimensional, e.g., the unknown is a function, we can approximately represent the function with a functional form such as linear combination of basis functions or a neural network, in which case $\theta$ can be interpreted as coefficients of the functional form because there is a one-to-one correspondence between the function and its coefficients. For example, the constraint may have the form
\begin{equation*}
    F(\theta(u), u) = 0
\end{equation*}
where $\theta(u)$ is a function that maps $u$ to a scalar or vector value. We use a neural network to approximate unknown functions, and we interpret $\theta$ as weights and biases in the neural network. 

The physical constraints $F(\theta, u)=0$ is usually solved with numerical schemes. Let the discretization scheme be $F_h(\theta, u_h) = 0$ where $u_h$ is the numerical solution. We assume that the gradients $\frac{\partial F_h}{\partial \theta}$ and $\frac{\partial F_h}{\partial u_h}$ exist and are continuous, which is usually the case for PDE systems. For example, in the finite difference method for numerically solving PDE constraints $F(\theta, u)=0$, the gradients can be computed with adjoint state methods \cite{plessix2006review,bradley2010pde,leung2006adjoint,allaire2015review,lauss2018discrete}. In the context of \Cref{equ:inverse}, the sparse observations $\{u_i\}_{i\in \mathcal{I}_{\mathrm{obs}}}$ are usually given as point values; therefore it is most appropriate to work with a strong form partial differential equation where we can read out function values $u(\mathbf{x}_i)$ directly from numerical solutions to $F_h(\theta,u_h)=0$ (in weak form, point-wise values are not defined). Additionally, to compute and store the Jacobian efficiently (used for solving the nonlinear PDEs), we assume that the basis functions in the numerical scheme are local, so that the Jacobian is sparse. Many standard numerical schemes, such as finite difference methods \cite{thomas2013numerical,noye1984finite} and iso-geometric analysis \cite{hughes2005isogeometric,auricchio2010isogeometric}, deal with strong form of PDEs and the discretization scheme is local. 

The notion of sparse observations is not only important in practice but also has implications for computational methods. For example, if given full solution $u_{\mathrm{full}}$, we can estimate $\theta$ by minimizing the residual
\[
    \min_{\theta} \|F_h(\theta, u_{\mathrm{full}})\|_2^2
\]
This is not possible for sparse observations; instead, we must solve for $u_h$ from $F_h(\theta, u_h)=0$ and then compare $u_h$ with observations at locations $\{\bx_i\}_{i\in \mathcal{I}_{\mathrm{obs}}}$. 

One popular approach for solving \Cref{equ:inverse} is by \textit{weakly} enforcing the physical constraints by adding a penalty term to the discretized equations \cite{van2015penalty}
\begin{equation}\label{equ:tl}
    \tilde L_h(\theta, u_h) = L_h(u_h) + \lambda \|F_h(\theta, u_h)\|_2^2
\end{equation}
In this method, both $\theta$ and $u_h$ become independent variables in the unconstrained optimization. The gradients $\frac{\partial \tilde L_h(\theta, u_h)}{\partial \theta}$ and $\frac{\partial \tilde L_h(\theta, u_h)}{\partial u_h}$ can be computed using an adjoint approach, or automatic differentiation \cite{herzog2010algorithms,paszke2017automatic,abadi2016tensorflow_proc,abadi2016tensorflow_art,van2018automatic,bell2008algorithmic}. The gradients are provided to a gradient-based optimization algorithm for minimizing \Cref{equ:tl}. The physics-informed neural network (PINN) \cite{raissi2019physics,raissi2019deep,yang2018physics,meng2019composite} is an example where the penalty method can be applied. The penalty method is attractive since it avoids solving $F_h(\theta, u_h)=0$ but the drawback is that the physical constraints are not satisfied exactly, and the number of optimization variables increases by including $u_h$ (either the discretized values or coefficients in the surrogate models). The number of extra degrees of freedom can be very large for dynamical problems, in which case $u_h$ is a collection of all solution vectors at each time step. 
    
    Another approach is to enforce the physical constraint by numerically solving the equation $F_h(\theta, u_h)=0$ \cite{roth2013discrete,rees2010optimal,funke2013framework}. The solution $u_h(\bx;\theta)$ depends on $\theta$ and the objective function becomes 
    \begin{equation*}
        \tilde L_h(\theta) = L_h(u_h(\bx;\theta))
    \end{equation*}
The optimal $\theta$ can be found using the gradient function 
\begin{equation}\label{equ:gradient}
     \nabla_{\theta} L_h(u_h(\bx;\theta))
\end{equation}
The challenge is solving the equation $F_h(\theta, u_h)=0$ and computing \Cref{equ:gradient}. If $F_h(\theta, u_h)$ is nonlinear in $u_h$, we need the Jacobian $\frac{\partial F_h}{\partial u_h}$ in the Newton-Raphson method \cite{galantai2000theory}. For computing \Cref{equ:gradient}, the finite difference method has been applied to this kind of problem but suffers from the curse of dimensionality and stability issues \cite{baydin2018automatic,thomas2013numerical}. Adjoint methods \cite{plessix2006review} can be used but can be time-consuming and error-prone to derive and implement. 

We propose \textit{physics constrained learning} (PCL) that enforces the physical constraint by solving $F_h(\theta, u_h)=0$ numerically and uses reverse mode automatic differentiation and forward Jacobian propagation to extract the gradients and Jacobian matrices. The key is computing the gradient of the coupled system of neural networks and numerical schemes efficiently. It is worth mentioning that we optimize the storage and computational cost of Jacobian matrices calculation by leveraging their sparsity. However, if the solution representation is nonlocal, such as that in PINNs, the corresponding Jacobian matrices are dense, and thus enforcing physical constraints by solving the PDE system is challenging. 

Our major finding is that compared to the penalty method, the convergence of PCL is potentially more robust and faster due to fewer iterations for stiff problems, despite each iteration being more expensive. Additionally, PCL does not require selecting a penalty parameter and therefore requires less effort to choose hyper-parameters. The PCL also simplifies the implementation by calculating gradients and Jacobians automatically using the reverse mode automatic differentiation and forward Jacobian propagation techniques. By formulating the observations and loss function $L(u)$ in terms of weak solutions (i.e., an integral of the product of the weak solution and a test function), the PCL can be generalized to numerical schemes that deal with weak form solutions such as finite element analysis. Therefore, the PCL promises to benefit a wide variety of inverse modeling problems. 

We consider several applications of PCL for physics-based machine learning. We compare PCL with the penalty method, and we demonstrate that, in our benchmark problems,
\begin{enumerate}
    \item PCL enjoys faster convergence with respect to the number of iterations to converge to a predetermined accuracy. Particularly, we observe a $10^4$ times speed-up compared with the penalty method in the Helmholtz problem. We also prove a convergence result, which shows that for the chosen model problem, the condition number in the penalty method is much worse than that of PCL. 
    \item PCL exhibits mesh independent convergence, while the penalty me\-thod does not scale with respect to the number of iterations as well as PCL when we refine the mesh.
    \item PCL is more robust to noise and neural network architectures. The penalty method includes the solution $u_h$ as independent variables to optimize, and the optimizer may converge to a nonphysical local minimum.
\end{enumerate}

The outline of the paper is as follows: in \Cref{sect:problem}, we formulate the discrete optimization problem and discuss two different approaches: penalty methods and constraint enforcement methods. In \Cref{sect:num}, we present numerical benchmarks and compare the penalty methods and PCL. Finally, in \Cref{sect:conc}, we summarize our results and discuss the limitations of PCL.

\section{Data-driven Inverse Modeling with Sparse Observations}\label{sect:problem}

\subsection{Problem Formulation and Notation}

When the inverse problem \Cref{equ:inverse} is discretized, $u$ and the output of $F$ will be vectors. We use the notation $u_h$, $L_h$, $F_h$ to denote $u$, $L$, $F$ in the discretized problem. $L_h:\RR^n\rightarrow \RR$, $F_h:\RR^d\times\RR^n\rightarrow \RR^n$ are continuously differentiable functions. The discretized problem can be stated as 
\begin{gather}\label{equ:hhh}
    \min_{\theta}\; L_h(u_h) \\
    \mathrm{s.t.}\;\; F_h(\theta, u_h) = 0 \notag
\end{gather}
For example, if we observe values of $u(x)$, $\{u_i\}_{i\in \mathcal{I}_{\mathrm{obs}}}$, at location $\{\bx_i\}$, we can formulate the loss function with least squares
\begin{equation*}
    L_h(u_h) =  \sum_{i\in \mathcal{I}_{\mathrm{obs}}}\left( u_h(\bx_i) - u_i \right)^2
\end{equation*}
$F_h(\theta, u_h)$ is the residual in the discretized numerical scheme. 

\begin{example}
In finite element analysis, if the system is linear, we have
\begin{equation*}
    F_h(\theta, u_h) = A(\theta) u_h - b
\end{equation*}
where $b$ is the external load vector and $A(\theta)$ is the stiffness matrix of finite element analysis, taking into account the boundary conditions. 
\end{example}

\subsection{Penalty Method}

The penalty method (PM) incorporates the constraint $F_h(\theta,u_h)=0$ into the system by penalizing the term with a suitable norm (e.g., 2-norm in this work). We consider the class of differentiable penalized loss function
\begin{equation*}
    \tilde L_{h,\lambda}(\theta, u_h) :=  L_h(u_h) + \lambda\|F_h(\theta, u_h)\|_2^2
\end{equation*}
where $\lambda\in (0,\infty)$. The term $\lambda$ is called the multiplier of the penalty method and is used to control to what extent the constraint is enforced. We will omit the subscript $\lambda$ when it is clear from the context. 

In the penalty method, the constraint may not be satisfied exactly. However, if the value $\lambda$ is made suitably large, the penalty term $\lambda\|F_h(\theta, u_h)\|_2^2$ will impose a large cost for violating the constraint. Hence the minimization of the penalized loss function will yield a solution with small value in the residual term $F_h(\theta, u_h)$. However, a large $\lambda$ places less weight on the objective function $L_h(u_h)$. A proper choice of $\lambda$ for a desirable trade-off is nontrivial in many cases. 

The penalty method can be visualized in \Cref{fig:first}-left, where $\theta$ and $u_h$ both serve as trainable parameters. In the gradient-based optimization algorithm discussed in the next section, the gradients are back-propagated from the loss function to both $\theta$ and $u_h$ separately.  

\begin{figure}[h]
\centering
  \includegraphics[width=0.8\textwidth]{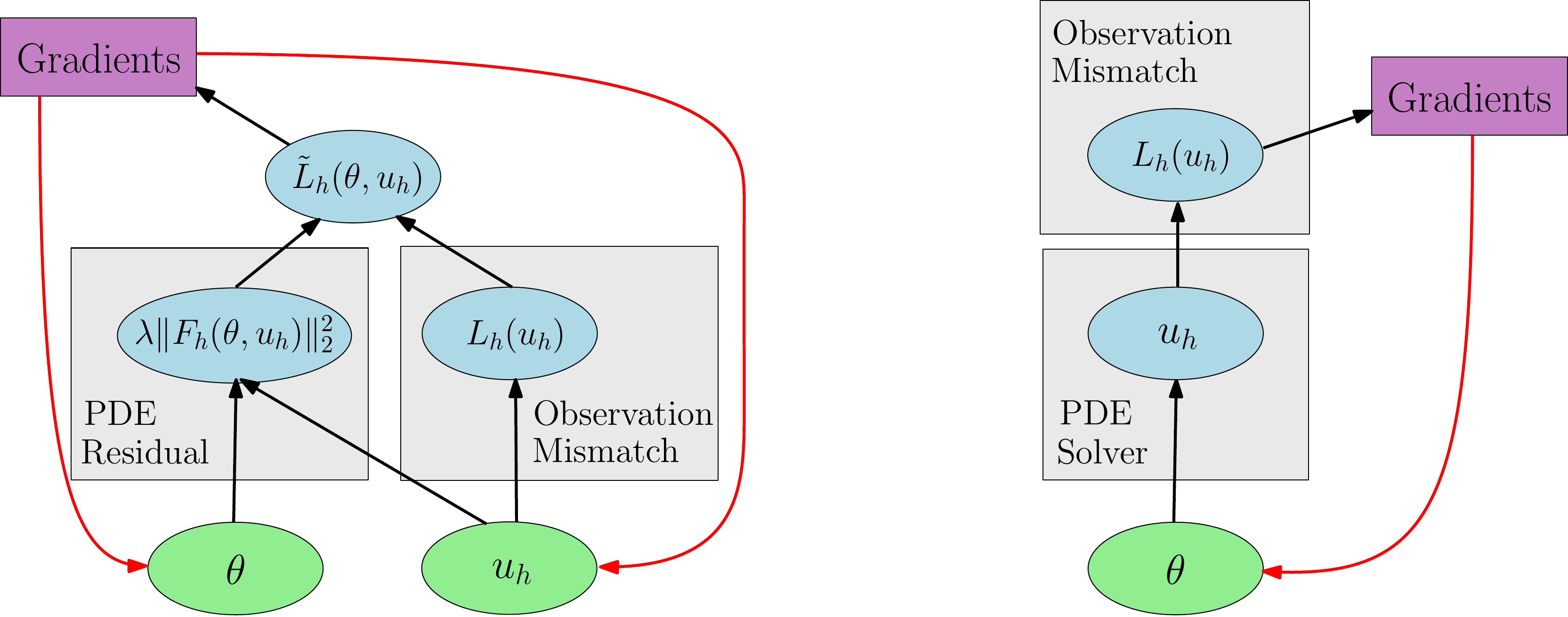}
  \caption{Schematic illustration of penalty method (left) and PCL (right).}
  \label{fig:first}
\end{figure}

The following theorem ensures that the penalty method converges to the optimal point under certain conditions
\begin{theorem}\label{thm:cvg}
    Assume that $L_h$ and $F_h$ are continuously differentiable functions, $\lambda\in (0,\infty)$ and the feasible set
    \begin{equation*}
        S = \{u_h\in \RR^n: F_h(\theta, u_h)=0 \}
    \end{equation*}
    is not empty. Assume $u^k$ is the global minimum of $\tilde L_{h,\lambda_k}(\theta, u_h)$ at step $k$ of the algorithm, and $\lambda_k \to \infty$ as $k\rightarrow \infty$. Then the sequence $\{u^k\}$ converges to the solution of \Cref{equ:hhh} as $k\rightarrow \infty$.
\end{theorem}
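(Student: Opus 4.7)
The plan is to follow the classical convergence proof for quadratic penalty methods, adapted to the joint variables $(\theta, u_h)$. First I would fix a reference feasible pair $(\theta^\star, u^\star)$ that achieves the minimum of \Cref{equ:hhh} (whose existence I would assume, since $S$ is nonempty and with mild coercivity of $L_h$ the minimum is attained). Since $(\theta^\star, u^\star)$ is feasible, $F_h(\theta^\star, u^\star) = 0$, so $\tilde L_{h,\lambda_k}(\theta^\star, u^\star) = L_h(u^\star)$. By the global optimality of $(\theta^k, u^k)$ for $\tilde L_{h,\lambda_k}$, I obtain the key inequality
\begin{equation*}
L_h(u^k) + \lambda_k \|F_h(\theta^k, u^k)\|_2^2 \;\le\; L_h(u^\star).
\end{equation*}

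From this one-line estimate, two consequences fall out. First, assuming $L_h$ is bounded below on the relevant set (a standard requirement I would state explicitly), the term $\lambda_k\|F_h(\theta^k, u^k)\|_2^2$ is uniformly bounded, so $\|F_h(\theta^k,u^k)\|_2^2 \le C/\lambda_k \to 0$. Second, $L_h(u^k) \le L_h(u^\star)$ for all $k$. At this point I would invoke a compactness hypothesis (either boundedness of the feasible level sets of $L_h$, or coercivity of $\tilde L_{h,\lambda_k}$) to extract a subsequence $(\theta^{k_j}, u^{k_j}) \to (\bar\theta, \bar u)$. By continuity of $F_h$, passing to the limit gives $F_h(\bar\theta, \bar u) = 0$, i.e.\ $\bar u \in S$; by continuity of $L_h$, $L_h(\bar u) \le L_h(u^\star)$, so $\bar u$ is itself a solution of \Cref{equ:hhh}. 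A standard subsequence argument then upgrades subsequential convergence to convergence of the full sequence if the solution of \Cref{equ:hhh} is unique (otherwise every accumulation point is a solution, which is the form in which the classical result is usually stated).

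The main obstacle, or rather the place where care is needed, is the compactness step. The hypotheses as stated only give continuous differentiability and nonempty feasible set, which is not quite enough to guarantee that $\{(\theta^k, u^k)\}$ stays bounded or that the constrained problem even attains its minimum. In a careful write-up I would make this explicit by either assuming $L_h$ has bounded sub-level sets over the feasible region, or by working in a compact admissible set for $\theta$ (which is natural when $\theta$ collects neural network weights with bounded norm). The rest of the argument is essentially continuity of $F_h$ and $L_h$ combined with the inequality above, so the technical heart of the theorem lies entirely in ensuring that accumulation points exist and can be passed to the limit.
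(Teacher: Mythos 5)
Your proposal is correct and is precisely the classical quadratic-penalty convergence argument that the paper itself does not reproduce but simply cites from Luenberger: compare the penalized minimizer against a feasible point to get $L_h(u^k)+\lambda_k\|F_h(\theta^k,u^k)\|_2^2\le L_h(u^\star)$, conclude the constraint violation vanishes, and pass to the limit along a convergent subsequence. Your observation that the theorem as stated silently requires a compactness or coercivity hypothesis (to guarantee accumulation points, and uniqueness to get convergence of the whole sequence rather than of subsequences) is accurate and matches the assumptions under which the cited textbook result actually holds.
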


\begin{proof}
     See \cite{luenberger1984linear}.
\end{proof}

In practice, \Cref{thm:cvg} may be of limited use since it requires solving a sequence of hard-to-solve unconstrained minimization problems. Still, under certain assumptions, it can provide an approximate solution of \Cref{equ:hhh}.

\subsection{Physics Constrained Learning}\label{sect:acem}

We present an algorithm to compute the gradients of the loss function with respect to $\theta$ while numerically satisfying the physical constraint during optimization (\Cref{fig:first}-right). We denote the numerical solution to $F_h(\theta, u_h)=0$ by $u_h$, i.e., 
\begin{equation}\label{equ:fg0}
    F_h(\theta, G_h(\theta))=0 
\end{equation}
The new loss function becomes
\begin{equation*}
    \tilde L_h(\theta)  = L_h(G_h(\theta))
\end{equation*}
To derive the gradient $\frac{\partial \tilde L_h(\theta)}{\partial \theta}$, we apply the implicit function theorem to \Cref{equ:fg0} and obtain
\begin{align*}
	& \frac{{\partial {F_h(\theta, u_h)}}}{{\partial \theta }} + {\frac{{\partial {F_h(\theta, u_h)}}}{{\partial {u_h}}}}  \frac{\partial G_h(\theta)}{\partial \theta} = 0 \qquad \Rightarrow \\[4pt]
    & \frac{\partial G_h(\theta)}{\partial \theta} =  -\Big( \frac{{\partial {F_h(\theta, u_h)}}}{{\partial {u_h}}} \Big)^{ - 1} \frac{{\partial {F_h(\theta, u_h)}}}{{\partial \theta }}
\end{align*}
Therefore we have
\begin{align}
    \frac{{\partial {{\tilde L}_h}(\theta )}}{{\partial \theta }} 
    &= \frac{\partial {{ L}_h}(u_h )}{\partial u_h}\frac{\partial G_h(\theta)}{\partial \theta}\notag \\
    &= - \frac{{\partial {L_h}({u_h})}}{{\partial {u_h}}} \;
    \Big( {\frac{{\partial {F_h(\theta, u_h)}}}{{\partial {u_h}}}\Big|_{u_h = {G_h}(\theta )}} \Big)^{ - 1} \;
    \frac{{\partial {F_h(\theta, u_h)}}}{{\partial \theta }}\Big|_{u_h = {G_h}(\theta )}\label{equ:s2}
\end{align}
Now we discuss how to compute \Cref{equ:s2} efficiently.  Assume that the number of parameters in $\theta$ is $p$, the degrees of freedom of $u_h$ is $N$, the complexity of solving a linear system with coefficient matrix $\frac{\partial {F_h(\theta, u_h)} }{\partial u_h}$ is $C$ (together with boundary conditions) and the complexity of evaluating $F_h(\theta, u_h)$ is $C'$. Additionally, in the following complexity estimation, we assume that we have already solved $F_h(\theta, u_h)=0$ and obtain the solution vector $u_h$. There are in general two strategies to compute \Cref{equ:s2}:
\begin{enumerate}
    \item We compute \Cref{equ:s2} from right to left, i.e., compute
    \[ 
        z = \underbrace{
            \Big( {\frac{{\partial {F_h}}}{{\partial {u_h}}}\Big|_{u_h = {G_h}(\theta )}} \Big)^{ - 1}}_{N\times N}
            \;\; \underbrace{\frac{{\partial {F_h}}}{{\partial \theta }}\Big|_{u_h = {G_h}(\theta )}}_{N\times p}
    \]
    which has cost $\mathcal{O}(pC)$ and then compute
    \[ \underbrace{\frac{{\partial {L_h}({u_h})}}{{\partial {u_h}}}}_{1\times N}\; z \]
    which has cost $\mathcal{O}(Np)$. The total cost is $\mathcal{O}(\max\{Np, pC\})=\mathcal{O}(pC)$ (typically $C$ is at least $\mathcal{O}(N)$).
    \item We first compute 
    \[ 
        w^T = \underbrace{\frac{{\partial {L_h}({u_h})}}{{\partial {u_h}}\rule[-9pt]{1pt}{0pt}}}_{1\times N} 
        \;\;
        \underbrace{\Big( {\frac{{\partial {F_h}}}{{\partial {u_h}}}\Big|_{u_h = {G_h}(\theta )}} \Big)^{ - 1}}_{N\times N}
    \]
    which is equivalent to solving a linear system 
    \begin{equation}\label{equ:linsys2}
        {\left( {\frac{{\partial {F_h}}}{{\partial {u_h}}}\Bigg|_{u_h = {G_h}(\theta )}} \right)^{T}} w =\left(\frac{{\partial {L_h}({u_h})}}{{\partial {u_h}}}\right)^T
    \end{equation}
    with cost $\mathcal{O}(C)$, and then compute (in the following operation, we assume $w$ is independent of $\theta$)
    $$w^T\;\underbrace{\frac{{\partial {F_h}}}{{\partial \theta }}\Big|_{u_h = {G_h}(\theta )}}_{N\times p} = \frac{\partial (w^T\;  {F_h}(\theta, u_h))}{\partial \theta }\Bigg|_{u_h = {G_h}(\theta )}$$
    If we apply reverse mode automatic differentiation, using fused-multiply adds, denoted by OPS, as a metric of computational complexity, we have \cite{griewank2008evaluating,margossian2018review}
    \begin{equation}
        \mathrm{OPS}\left( \frac{\partial (w^T\;  {F_h}(\theta, u_h))}{\partial \theta } \right) \leq 
        4 \; \mathrm{OPS}({F_h}(\theta, u_h)))
    \end{equation}
    Therefore, the total computational cost will be  $\mathcal{O}(\max\{C', C\})$.
\end{enumerate}
In the case when $p$ is large, e.g., $p$ are the weights and biases of a neural network, the second approach is preferable. Hence, we adopt the second strategy in our implementation. 

In many physical simulations, we need to solve a linear system, which can be expressed as 
\begin{equation}\label{equ:linearcase}
F_h(\theta_1,\theta_2, u_h) = \theta_1 - A(\theta_2)u_h = 0
\end{equation}
Applying \Cref{equ:s2} to \Cref{equ:linearcase}, we have 
	\begin{align*}
p &:=\frac{\partial \tilde L_h(\theta_1, \theta_2)}{\partial \theta_1} = \frac{\partial L_h(u_h)}{\partial u_h}A(\theta_2)^{-1}\\
q &:=\frac{\partial \tilde L_h(\theta_1, \theta_2)}{\partial \theta_2} = -\frac{\partial L_h(u_h)}{\partial u_h}A(\theta_2)^{-1} \frac{\partial A(\theta_2)}{\partial \theta_2}
\end{align*}
which is equivalent to 
$$A^T p^T =\left( \frac{\partial  L_h(u_h)}{\partial u_h} \right)^T\quad q = -p\frac{\partial A(\theta_2)}{\partial \theta_2} $$
In the following, we consider a concrete example of \Cref{equ:linearcase}.

\begin{example}
We consider the following example: solve an inverse modeling problem with the 1D Poisson equation, with $F(\theta, u)=0$ expressed as 
\begin{equation}\label{equ:model equation}
  \begin{aligned}
    \frac{\partial}{\partial x} \left( 
        f(u(x);\theta) \; \frac{\partial u(x)}{\partial x} 
        \right) &= g(x) & x \in (0,1)\\
    u(0)=u(1) &= 0 & 
\end{aligned} 
\end{equation}
where $f(u(x);\theta)$ is a function of $u(x)$ parametrized by an unknown parameter $\theta$. For example, $f(u(x); \theta)$ can be a neural network which maps $u(x)$ to a scalar value and $\theta$ are the weights and biases. The sparse observations are $u_{i,\mathrm{obs}}$, the true values of $u(x)$ at location $i\in \mathcal{I}_\mathrm{obs}$, and $\mathcal{I}_\mathrm{obs}\subset \mathbb{N}$ is a set of location index. 

We apply the finite difference method to \Cref{equ:model equation}. We discretize $u$ on a uniform grid with interval length $h$ and node $x_i = \frac{i-1}{n}$, $i=1,2,\ldots,n+1$. $u_i$ denotes the discretized values at $x_i$, $u_h = [u_1, u_2, \ldots, u_{n+1}]$ and $g_i = g(x_i)$.  The corresponding $F_h(\theta, u_h)$ is
\begin{gather*}
    {F_h(\theta, u_h)}_i = f\left( \frac{u_i+u_{i+1}}{2};\theta \right) \frac{u_{i+1}-u_i}{h^2} - f\left( \frac{u_i+u_{i-1}}{2};\theta \right) \frac{u_{i}-u_{i-1}}{h^2} - g_i \\
    i=2,3,\ldots, n\\
    u_1 =u_{n+1} = 0
\end{gather*}
Assume that the observations are located exactly at some of the grid nodes, the loss function can be formulated as 
\begin{equation*}
    L_h(u_h) =\sum_{i\in \mathcal{I}_{\mathrm{obs}}} \left(u_i - u_{i,\mathrm{obs}}\right)^2 
\end{equation*}
and therefore, we have
\begin{equation}\label{equ:dLdu}
    \frac{{\partial {L_h}({u_{h,i}})}}{{\partial {u_h}}} = 
    \begin{cases}
        2 \left(u_i - u_{i,\mathrm{obs}}\right) & i\in \mathcal{I}_\mathrm{obs}\\
        0 & i\not\in \mathcal{I}_\mathrm{obs}
    \end{cases} 
\end{equation}
The Jacobian of $F_h(\theta, u_h)$ with respect to $u_h$ is 
\begin{align}
    \frac{\partial {F_h(\theta, u_h)}_i }{\partial u_{h,j}} = \ &  \frac{\partial}{\partial u}f\left( \frac{u_i+u_{i+1}}{2};\theta \right) \frac{\delta_{i,j}+\delta_{i+1,j}}{2} \frac{u_{i+1}-u_i}{h^2} \notag \\
    & + f\left( \frac{u_i+u_{i+1}}{2};\theta \right) \frac{\delta_{i+1,j}-\delta_{i,j}}{h^2} \notag \\
    & - \frac{\partial}{\partial u}f\left( \frac{u_i+u_{i-1}}{2};\theta \right) \frac{\delta_{i,j}+\delta_{i-1,j}}{2}\frac{u_{i}-u_{i-1}}{h^2} \notag \\
    & - f\left( \frac{u_i+u_{i-1}}{2};\theta \right) \frac{\delta_{i,j}-\delta_{i-1, j}}{h^2} \label{equ:jacobian}
\end{align}
where 
\begin{equation*}
    \delta_{ij} = \begin{cases}
        1 & i=j\\
        0 & i\neq j
    \end{cases}
\end{equation*}
Note that for fixed $i$, $\frac{\partial {F_h(\theta, u_h)}_i }{\partial u_j}$ can only be nonzero for $j=i-1, i, i+1$ and therefore the Jacobian matrix is sparse.

Additionally, we have
\begin{equation}\label{equ:Ftheta}
    \begin{split}
    \frac{{\partial {F_h}}}{{\partial \theta }}(\theta ,{G_h}(\theta )) & = \nabla_\theta f\left( \frac{u_i+u_{i+1}}{2};\theta \right) \frac{u_{i+1}-u_i}{h^2} \\
    & \hspace*{4em} - \nabla_\theta f\left( \frac{u_i+u_{i-1}}{2};\theta \right) \frac{u_{i}-u_{i-1}}{h^2}             
    \end{split}
\end{equation}
Using the results from \Cref{equ:dLdu,equ:jacobian,equ:Ftheta} and the formula \Cref{equ:s2} we obtain the gradient ${{\partial {{\tilde L}_h}(\theta )}}/{{\partial \theta }}$.
\end{example}

\begin{remark}
    In practice, if the system of PDEs or the relation between $f(u;\theta)$ and $\theta$ is complex (e.g., a neural network), deriving and implementing \Cref{equ:jacobian,equ:Ftheta} are  challenging. Fortunately, we can compute those terms using automatic differentiation techniques. We present the technical details in  \ref{sect:detail} and  \ref{sect:trick}.
\end{remark}

\subsection{Analysis of PCL}

In the penalty method, the constraint $F(\theta,u)=0$ is imposed by including a penalty term $\|F_h(\theta, u_h)\|^2_2$ in the loss function, and the summation of the penalty term and the observation error is minimized simultaneously with gradient descent methods. Intuitively, if the problem $F_h(\theta,u_h)=0$ is stiff, the penalty method suffers from slow convergence due to worse condition number of the least square loss $\|F_h(\theta, u_h)\|^2_2$ compared to the original problem $F_h(\theta, u_h)=0$.

In this section, we analyze the convergence by considering a model problem
\begin{gather*}
    \min_{\theta} \|u-u_0\|^2_2 \\
    \text{s.t.} \;\; Au = \theta y
\end{gather*}
where $\theta\in \RR$ is unknown and $A$ is a nonsingular square coefficient matrix. For simplicity, assume that the true value for $\theta$ is $1$, and $u_0 = A^{-1} y$.

In PCL, we have
\begin{equation*}
    \tilde L_h(\theta) = \|\theta A^{-1} y - u_0\|^2_2 = (\theta-1)^2\|u_0\|_2^2
\end{equation*}
which is a quadratic function in $\theta$ and can be minimized efficiently using gradient-based method. Nevertheless, we have to solve $Au = y$, and the computational cost usually depends on the condition number $\kappa(A)$.

In the penalty method, the new loss function is
\begin{equation}\label{equ:least}
    \min_{\theta, u_h}\tilde L_h(\theta, u_h) = \|u_h-u_0\|^2_2 + \lambda \|Au_h -\theta y\|_2^2
\end{equation}
\Cref{equ:least} is equivalent to a least square problem $\min_{\bm{\theta}}\|\mathbf{A}\bm{\theta}-\mathbf{y}\|_2^2$ with the coefficient matrix and the right hand side
\begin{equation*}
    \mathbf{A}_\lambda = \begin{bmatrix}
        I & 0\\
        \sqrt{\lambda}A & -\sqrt{\lambda}y
    \end{bmatrix}, \qquad 
    \mathbf{y} = \begin{bmatrix}
        u_0\\ 0
    \end{bmatrix}
\end{equation*}
The least-square problem has a condition number that is at least  $\kappa(A)^2$ asymptotically, which is implied by the following theorem:
\begin{theorem}
    The condition number of $\mathbf{A}_\lambda$ is 
    \begin{equation*}
        \liminf_{\lambda\rightarrow \infty}\kappa(\mathbf{A}_\lambda)  \geq  \kappa(A)^2
    \end{equation*}
    and therefore, the condition number of the unconstrained optimization problem from the penalty method is the square of that from PCL asymptotically. 
\end{theorem}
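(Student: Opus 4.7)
The plan is to bound the extreme singular values of the $2n\times(n+1)$ rectangular matrix $\mathbf{A}_\lambda$ via Rayleigh quotients with two carefully chosen unit test vectors in $\RR^{n+1}$: one that forces $\|\mathbf{A}_\lambda v\|_2^2$ to grow linearly in $\lambda$ (pushing $\sigma_{\max}(\mathbf{A}_\lambda)$ up), and one that exploits the rank deficiency of the bottom block $[\sqrt{\lambda}A,\,-\sqrt{\lambda}y]$ so that $\|\mathbf{A}_\lambda v\|_2^2$ stays bounded uniformly in $\lambda$ (pinning $\sigma_{\min}(\mathbf{A}_\lambda)$ down). I will then take the ratio to obtain a lower bound on $\kappa(\mathbf{A}_\lambda)^2$ that diverges with $\lambda$ and therefore eventually dominates $\kappa(A)^4$.

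For the lower bound on $\sigma_{\max}(\mathbf{A}_\lambda)$, I would take $v = (w,0)^T$ with $w\in\RR^n$ a unit right singular vector of $A$ corresponding to $\sigma_{\max}(A)$. A direct block computation gives
$$\|\mathbf{A}_\lambda v\|_2^2 \;=\; \|w\|_2^2 + \lambda\|Aw\|_2^2 \;=\; 1 + \lambda\,\sigma_{\max}(A)^2,$$
so $\sigma_{\max}(\mathbf{A}_\lambda)^2 \geq 1 + \lambda\,\sigma_{\max}(A)^2$.

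For the upper bound on $\sigma_{\min}(\mathbf{A}_\lambda)$, I would first observe that the bottom block, being $n\times(n+1)$ with $A$ nonsingular, has the one-dimensional kernel spanned by $(A^{-1}y,\,1)^T$. Normalizing yields $v^* = (A^{-1}y,\,1)^T/\sqrt{1+\|A^{-1}y\|_2^2}$; the bottom block contributes zero to $\mathbf{A}_\lambda v^*$ and only the top block $[I_n,\,0]$ survives, so
$$\|\mathbf{A}_\lambda v^*\|_2^2 \;=\; \frac{\|A^{-1}y\|_2^2}{1+\|A^{-1}y\|_2^2},$$
independently of $\lambda$, giving $\sigma_{\min}(\mathbf{A}_\lambda)^2 \leq \|A^{-1}y\|_2^2/(1+\|A^{-1}y\|_2^2)$. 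Combining the two estimates,
$$\kappa(\mathbf{A}_\lambda)^2 \;\geq\; \frac{\bigl(1+\lambda\,\sigma_{\max}(A)^2\bigr)\bigl(1+\|A^{-1}y\|_2^2\bigr)}{\|A^{-1}y\|_2^2} \;\longrightarrow\; \infty \quad\text{as } \lambda\to\infty,$$
which eventually exceeds $\kappa(A)^4$; taking square roots delivers $\liminf_{\lambda\to\infty}\kappa(\mathbf{A}_\lambda)\geq\kappa(A)^2$.

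The hard part, and the only place real insight is needed beyond elementary algebra, will be identifying the singular direction that pins $\sigma_{\min}$. The penalty formulation treats $\theta$ as an additional unknown, enlarging the state by one dimension and forcing the bottom block to be rectangular with a nontrivial kernel; the top identity block can only fill that kernel to constant norm, so $\sigma_{\min}(\mathbf{A}_\lambda)$ cannot scale with $\sqrt{\lambda}$ even though $\sigma_{\max}(\mathbf{A}_\lambda)$ does. This $\sqrt{\lambda}$-versus-$O(1)$ mismatch is precisely the structural reason that the penalty method conditions worse than PCL by at least a factor of $\kappa(A)$.
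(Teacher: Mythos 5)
Your proposal is correct, and it reaches the conclusion by a genuinely different route than the paper. The paper passes to the normal matrix $\mathbf{A}_\lambda^T\mathbf{A}_\lambda = \lambda B$, observes that in the right singular basis of $A$ the matrix $B$ is an arrowhead matrix with shaft entries $\sigma_i^2 + 1/\lambda$, and invokes the interlacing of arrowhead eigenvalues (via the secular equation) to show the extreme eigenvalues of $B$ straddle $\sigma_1^2+1/\lambda$ and $\sigma_n^2+1/\lambda$; the ratio tends to $\kappa(A)^2$, which is exactly the advertised bound on the conditioning of the penalized objective's Hessian. You instead work directly with the rectangular matrix and two Rayleigh-quotient test vectors: the top right singular vector of $A$ padded with a zero gives $\sigma_{\max}(\mathbf{A}_\lambda)^2 \geq 1+\lambda\sigma_{\max}(A)^2$, and the kernel vector $(A^{-1}y,\,1)^T$ of the bottom block pins $\sigma_{\min}(\mathbf{A}_\lambda)^2$ to the $\lambda$-independent value $\|A^{-1}y\|_2^2/(1+\|A^{-1}y\|_2^2)$, so $\kappa(\mathbf{A}_\lambda)\to\infty$ and the stated $\liminf$ inequality follows a fortiori. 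Your argument is more elementary (no arrowhead spectral theory) and proves a strictly stronger divergence statement, which is consistent with the known blow-up of penalty-method Hessians as $\lambda\to\infty$; what it does not deliver is the paper's quantitative reading that the penalty problem's conditioning is the \emph{square} of PCL's --- in your bound the blow-up is driven entirely by $\lambda$ and the factor $\kappa(A)^2$ never appears, whereas the interlacing argument isolates $\sigma_1^2/\sigma_n^2$ as the scale that survives even as the $1/\lambda$ perturbation vanishes. Two small points to tidy up: you should note $y\neq 0$ (otherwise $\|A^{-1}y\|=0$, though then $\mathbf{A}_\lambda$ has a zero column and $\sigma_{\min}=0$, so the conclusion is immediate), and you should state explicitly that $\sigma_{\min}(\mathbf{A}_\lambda)=\min_{\|v\|_2=1}\|\mathbf{A}_\lambda v\|_2$ because $\mathbf{A}_\lambda$ has at least as many rows as columns.
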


\begin{proof}
    Assume that the singular value decomposition of $A$ is
    \begin{equation*}
        A = U\Sigma V^T
    \end{equation*}
    where 
    \begin{equation*}
        \Sigma = \mathrm{diag}(\sigma_1, \sigma_2, \ldots, \sigma_n)
    \end{equation*}
    Without loss of generality, we assume $\sigma_1\geq\sigma_2\geq\cdots\geq\sigma_n> 0$. We have
    \begin{equation*}
        \mathbf{A}_\lambda = \lambda \begin{bmatrix}
            V & 0\\
            0 & 1
        \end{bmatrix} \begin{bmatrix}
            \frac{1}{\lambda} I + \Sigma^2 & \alpha \\
            \alpha^T & s
        \end{bmatrix} \begin{bmatrix}
            V^T & 0\\
            0 & 1
        \end{bmatrix}
    \end{equation*}
    for 
    \begin{equation*}
        s = y^Ty, \qquad \alpha = -\Sigma U^T y
    \end{equation*}
    Note 
    $$B = \begin{bmatrix}
            \frac{1}{\lambda} I + \Sigma^2 & \alpha \\
            \alpha^T & s
        \end{bmatrix}$$
    is an arrowhead matrix and its eigenvalues are expressed by the zeros of \cite{stor2014accurate}
    \begin{equation*}
        f(x) = s - x - \sum_{i=1}^{n}  \frac{\alpha_i^2}{\sigma_i^2 + \frac{1}{\lambda} - x}
    \end{equation*}
    Note that 
    \begin{equation*}
        \lim_{x\rightarrow \left(\sigma_i^2+ \frac{1}{\lambda}\right)+} f(x) \rightarrow +\infty,\quad \lim_{x\rightarrow \left(\sigma_i^2+ \frac{1}{\lambda}\right)-} f(x) \rightarrow -\infty
    \end{equation*}
    and
    \begin{equation*}
        \lim_{x\rightarrow +\infty} f(x) = -\infty\quad \lim_{x\rightarrow -\infty} f(x) = +\infty
    \end{equation*}
    We infer that the $n+1$ eigenvalues of $B$ are located in $(0, \sigma_1^2 + \frac{1}{\lambda} )$, $(\sigma_1^2 + \frac{1}{\lambda}, \sigma_2^2 + \frac{1}{\lambda})$, $\ldots$, $(\sigma_n^2 + \frac{1}{\lambda}, \infty)$. The smallest eigenvalue of $B$ is positive since $A$ is positive definite and so is $B$. 
    
    Therefore, we deduce that $\kappa(\mathbf{A}_\lambda) \geq \frac{\sigma_n^2+\frac{1}{\lambda}}{\sigma_1^2+\frac{1}{\lambda}}$, thus
    \begin{equation*}
        \liminf_{\lambda\rightarrow \infty} \kappa(\mathbf{A}_\lambda) \geq \frac{\sigma_n^2}{\sigma_1^2} = \kappa(A)^2
    \end{equation*}
\end{proof}

\section{Numerical Benchmarks}\label{sect:num}

In this section, we perform four numerical benchmarks and compare physics constrained learning (PCL) and the penalty method (PM).  Unless specified, we use \texttt{L-BFGS-B} \cite{luenberger1984linear,bonnans2006numerical}  to minimize the loss function for both methods, and use the same tolerance $\varepsilon = 10^{-12}$ for the gradient norm and the relative function change, i.e., we stop the iteration if 
\begin{equation*}
    \|\nabla_\theta l(z)\| <\varepsilon\qquad \left| \frac{l(z)-l(z')}{l(z')} \right| <\varepsilon
\end{equation*}
where $z$ and $z'$ represent the candidate parameter at the current and previous step and 
\begin{align*}
	\mbox{ PCL: }&& z &= \theta, & l(z) &= \tilde L_h(\theta) \\
	\mbox{ PM: }&& z &= (\theta, u_h), & l(z) &= \tilde L_h(\theta, u_h)
\end{align*}
To show that PCL is applicable for various numerical schemes and PDEs, we test various PDEs, numerical schemes and inverse problem types, which are listed in \Cref{tab:example}.

\begin{table}[htpb]
\centering
\begin{tabular}{@{}cccc@{}}
\toprule
Example & \qquad\qquad PDE & Numerical Scheme & Unknown Type \\ \midrule
\qquad1 &\qquad Linear Static & \qquad IGA &\quad Parameter \\
\qquad2 &\quad Nonlinear Static & \qquad FD &\quad Function \\
\qquad3 & Nonlinear Dynamic & \qquad FVM &\quad Function \\ \bottomrule
\end{tabular}
\caption{Numerical Benchmarks. IGA is short for the isogeometric analysis; FD is short for the finite difference method. FVM is short for the finite volume method. When the unknown type is a function, we use a deep neural network to approximate the unknown function and couple it with numerical schemes.}
\label{tab:example}
\end{table}
    
\subsection{Parametric Inverse Problem: Helmholtz Equation}

We consider the inverse problem for a Helmholtz equation \cite{chen1992inverse,bao2005inverse,tadi2011inverse}. Let $\Omega$ be a bounded domain in 2D, and the Helmholtz equation is given by
\begin{equation}\label{equ:helm}
    \Delta u + k^2 g(\bx) u = 0
\end{equation}
with the Dirichlet boundary condition 
\begin{equation}\label{equ:helmbd}
    u(\bx) = u_0(\bx),\ \bx\in \partial\Omega
\end{equation}
where $u(\bx)$ denotes the electric field, the amplitude of a time-harmonic wave, or orbitals for an energy state depending on the models. The parameter $k$ is the frequency and $g(\bx)$ is a physical parameter. In the applications, we want to recover $g(\bx)$ based on boundary measurements, i.e., the values of $u(\bx)$ and $h(\bx) = \frac{\partial u}{\partial n}(\bx)$. $g(\bx)$ are given in a parametric form
\begin{equation*}
    g(\bx) = ax^2 + bxy+cy^2 + dx+ey+f, \ \bx=(x,y)
\end{equation*}
where $\theta = (a, b, c, d, e, f)$ are unknown parameters.  

Assume we have observed $h_i$, i.e., the values of  $\frac{\partial u}{\partial n}(\bx)$ on the boundary points $\{\bx_i\}_{i\in \mathcal{I}_{\mathrm{obs}}}$, the optimization problem can be formulated as
\begin{gather}\label{equ:helmopt}
    \min_{\theta}\  L(u) =\frac{1}{n_{\mathrm{obs}}} \sum_{i\in \mathcal{I}_{\mathrm{obs}}}\left( \frac{\partial u}{\partial n}(\bx_i) - h_i  \right)^2\\
    \mathrm{s.t.}\;\; F_h(\theta, u_h) = 0 \notag
\end{gather}

Here $F_h(\theta, u_h)=0$ is the IGA discretization of \Cref{equ:helm} with the boundary condition \Cref{equ:helmbd}. 

\begin{figure}[hbtp]
    \centering
    \includegraphics[width=0.48\textwidth]{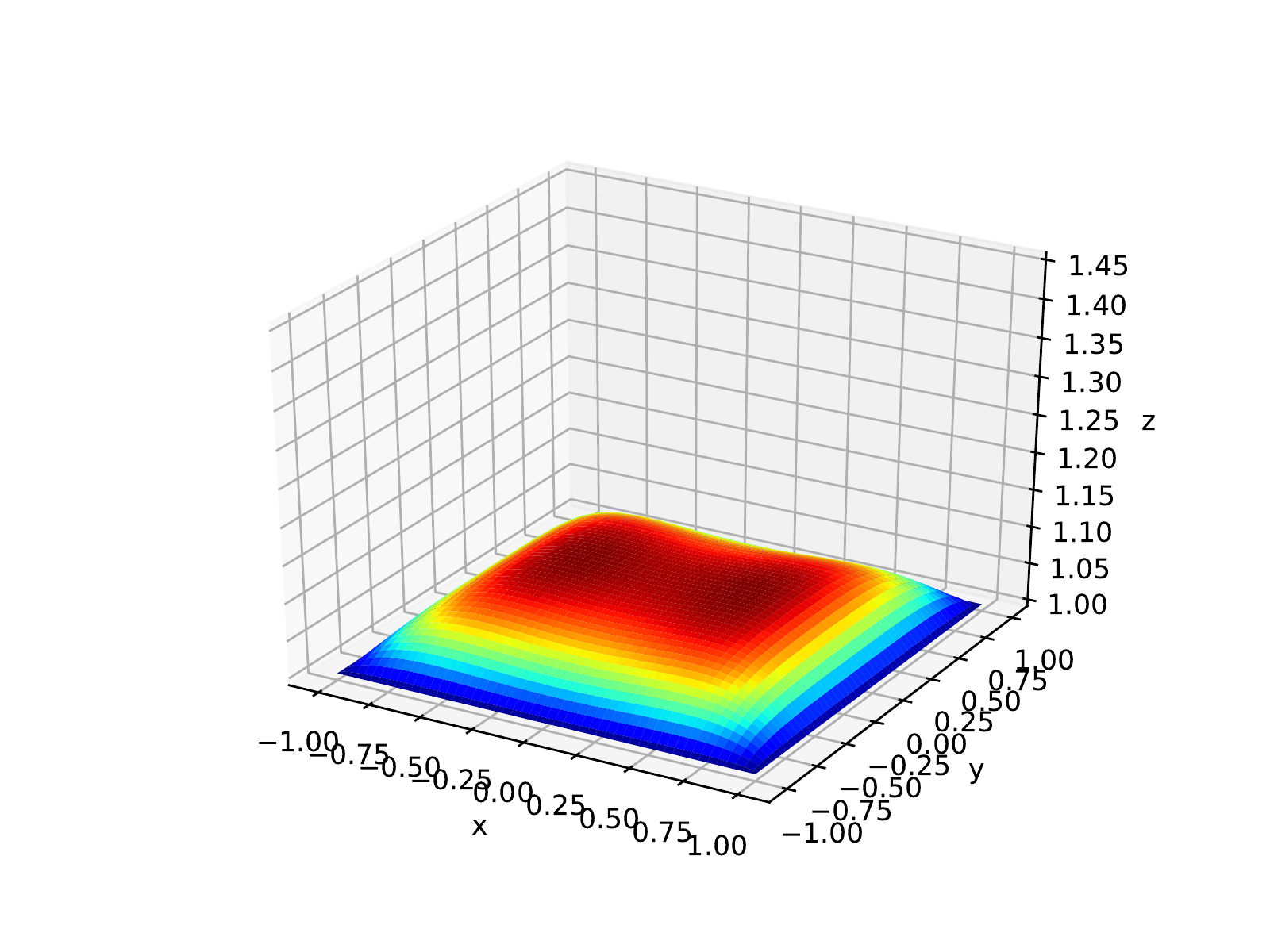}~
    \includegraphics[width=0.48\textwidth]{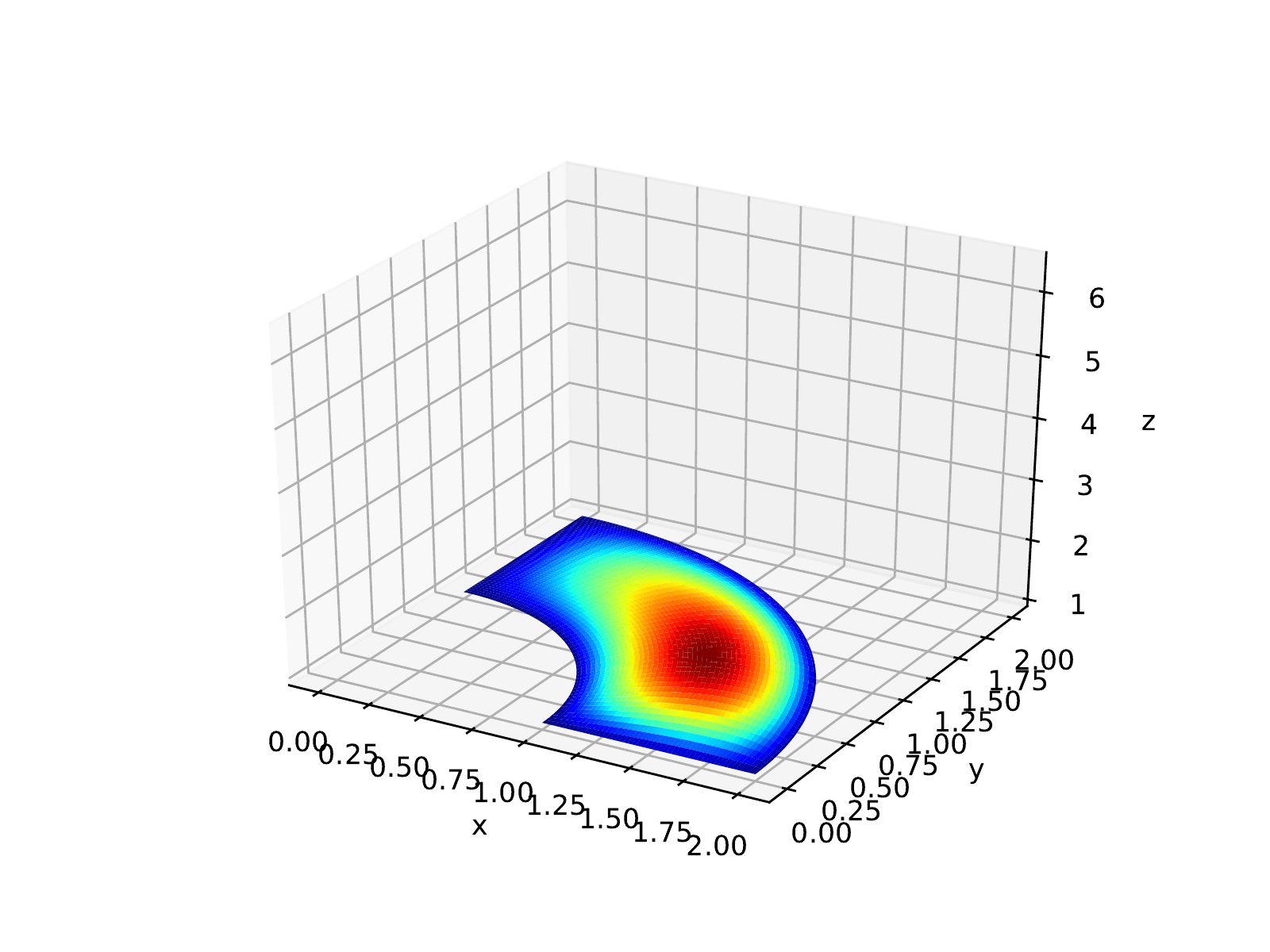}
    \includegraphics[width=0.48\textwidth]{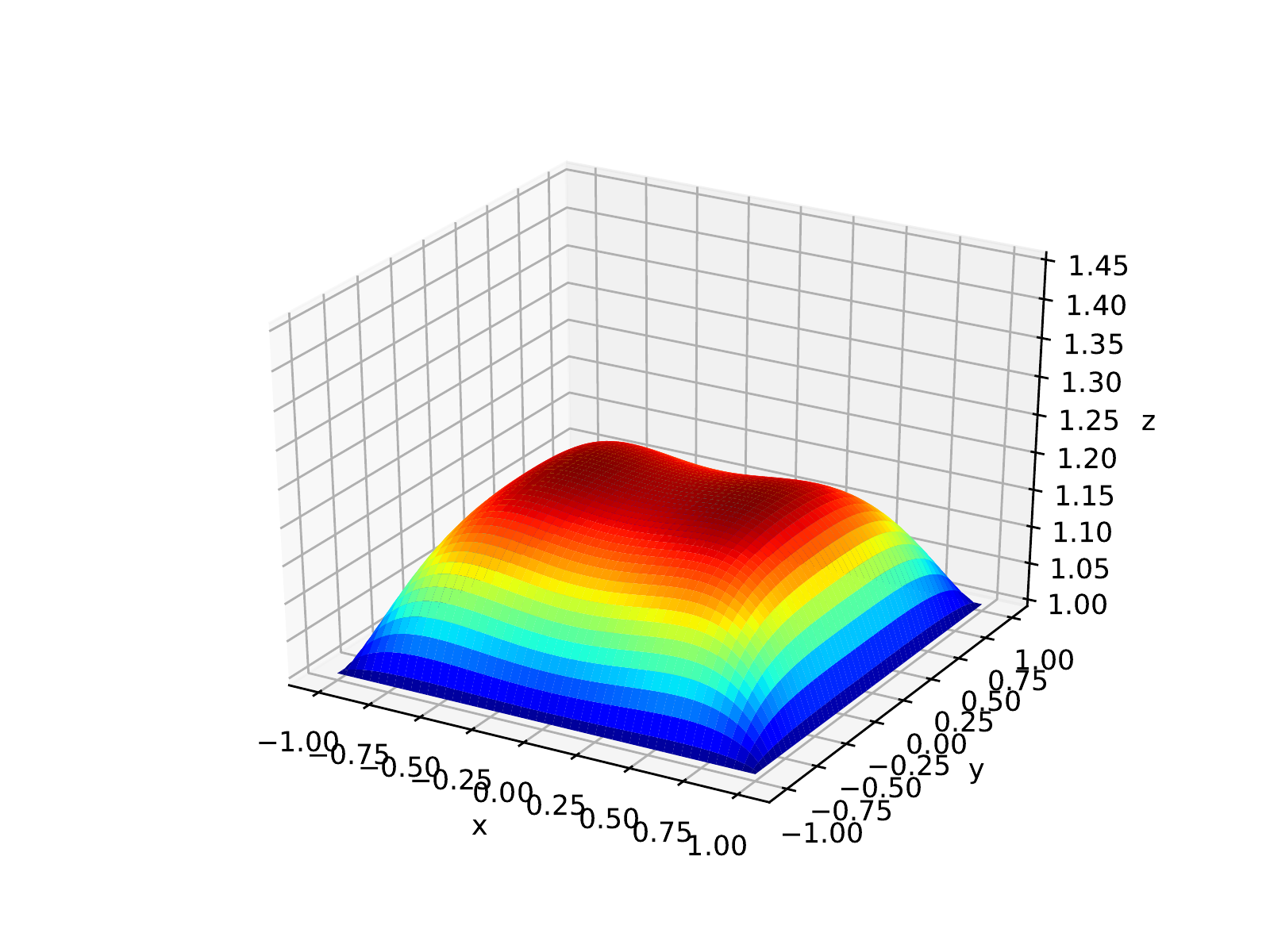}~
    \includegraphics[width=0.48\textwidth]{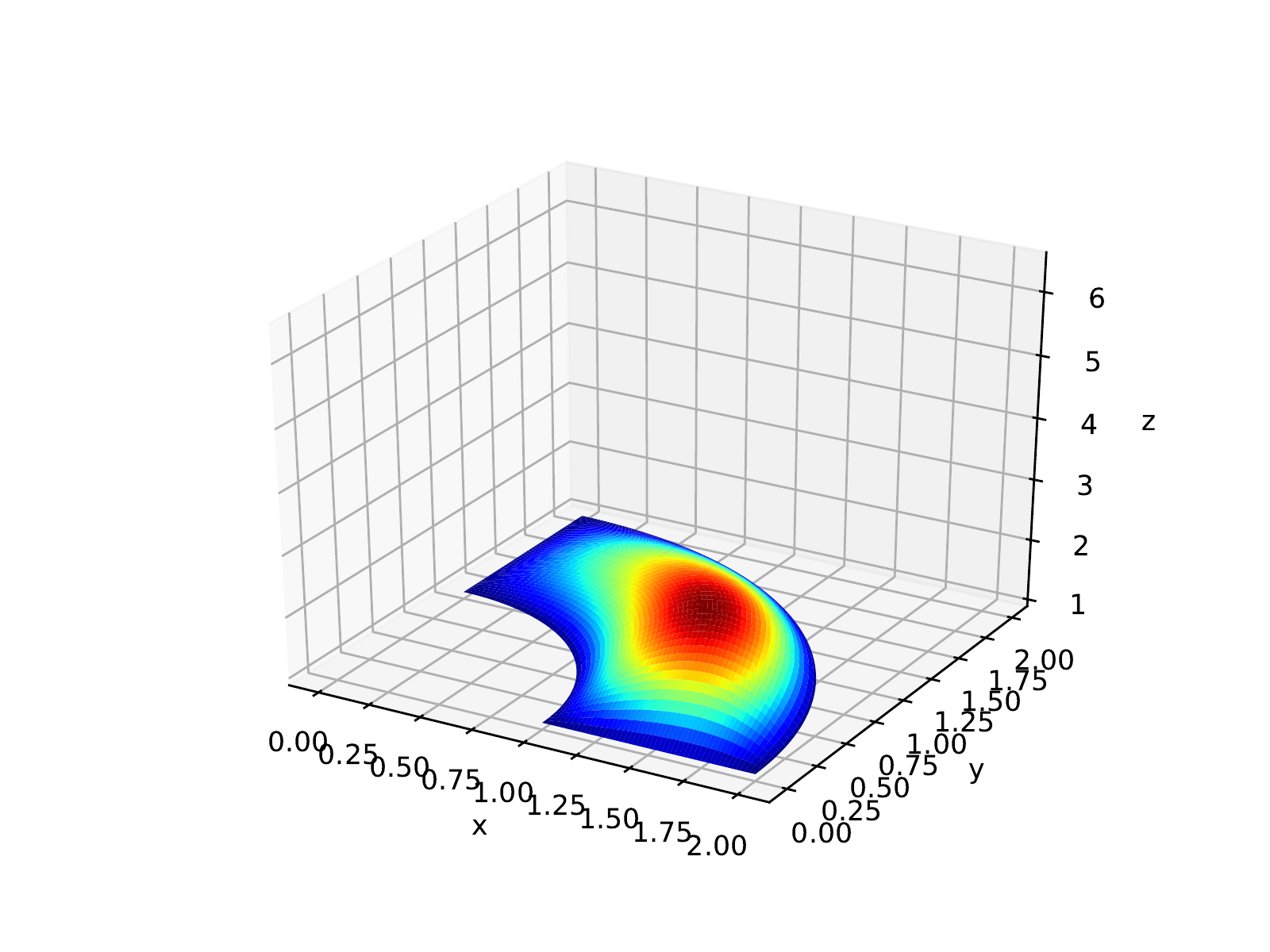}
    \includegraphics[width=0.48\textwidth]{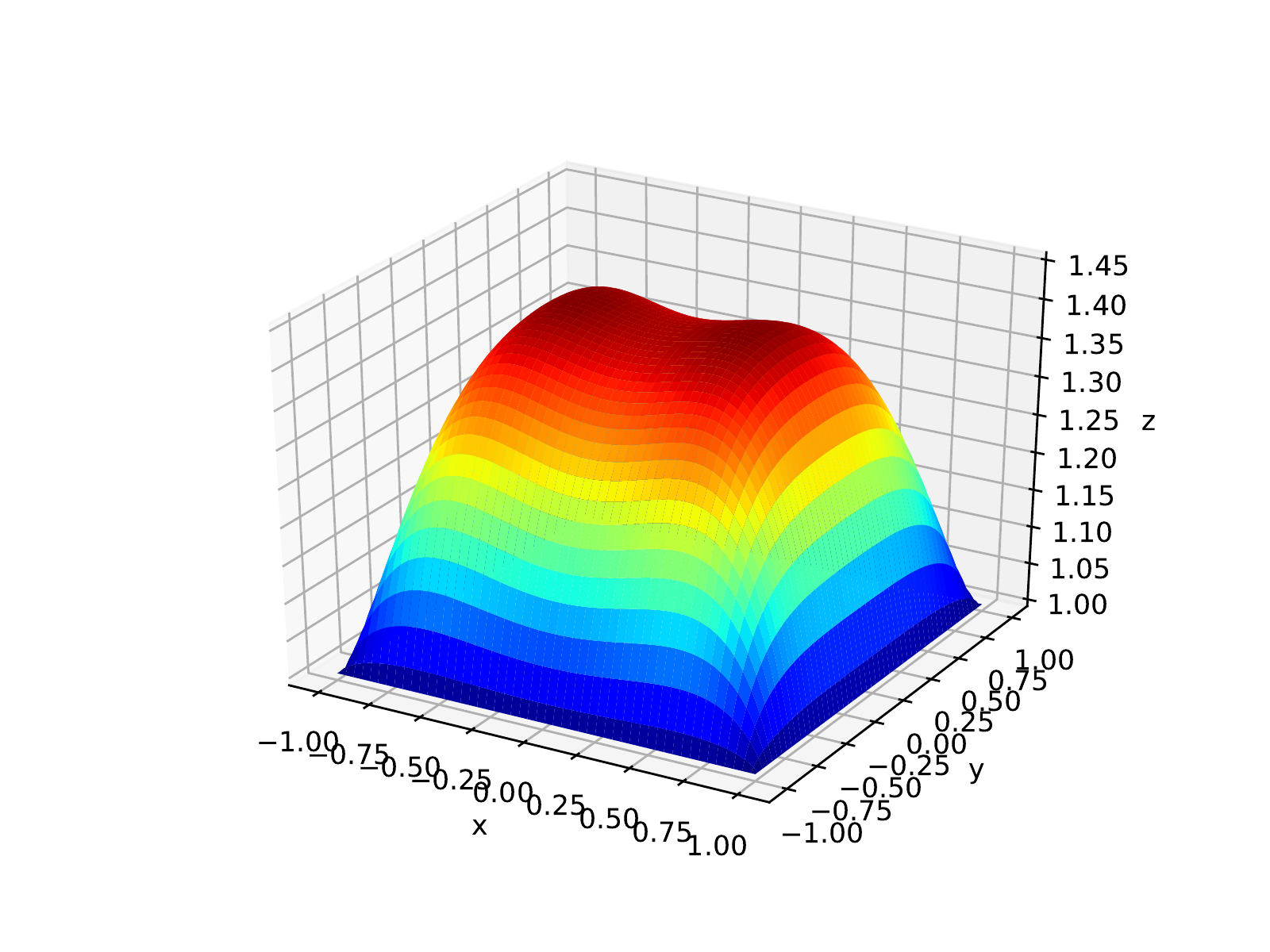}~
    \includegraphics[width=0.48\textwidth]{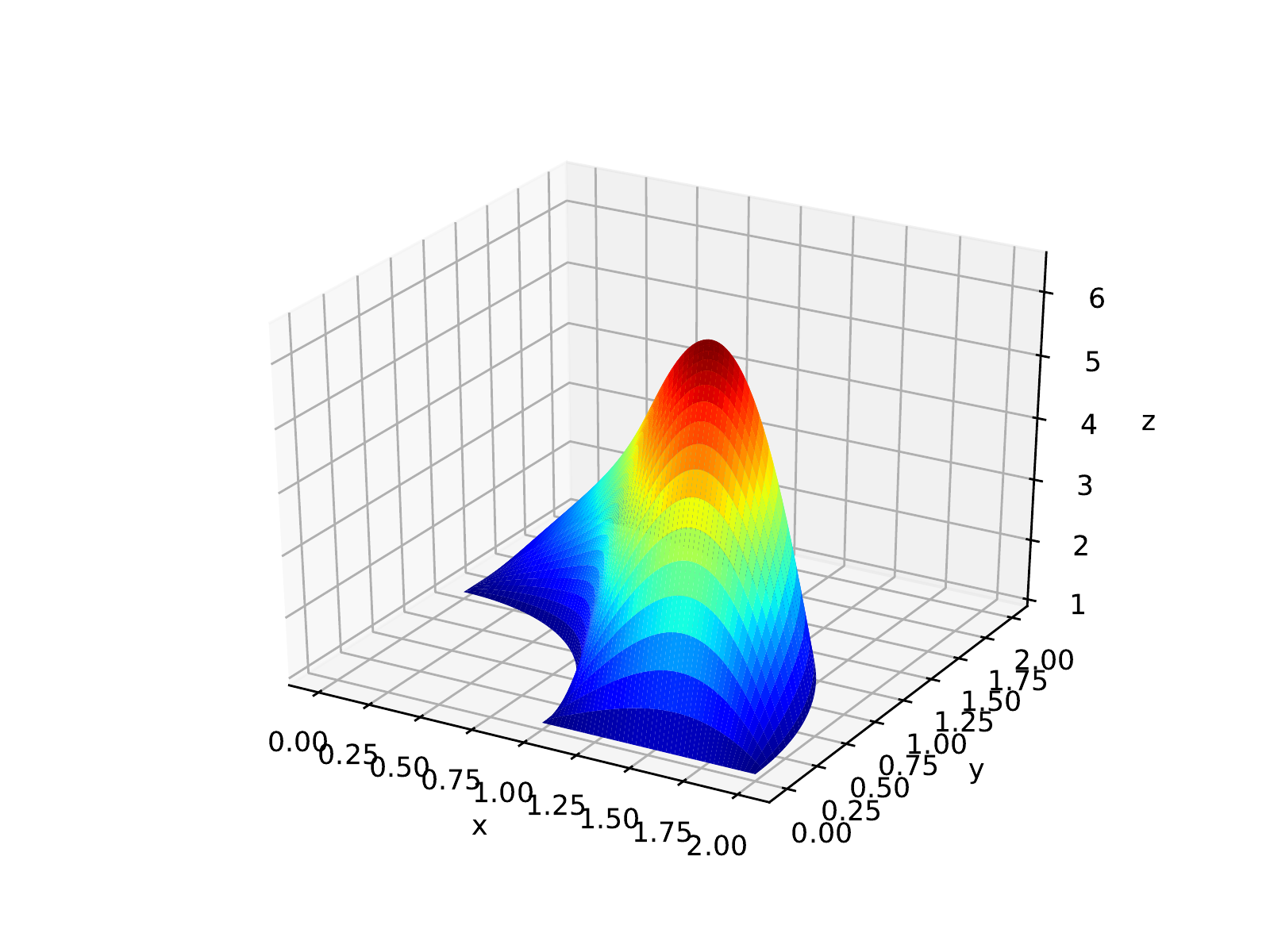}
    \caption{Solution profiles of \Cref{equ:helm} for a square domain and a quarter of annulus domain. The three rows correspond to $k=0.5$, $k=0.75$ and $k=1.0$.}
    \label{equ:helmsol}
\end{figure}

For verification, we use the parameters $\theta^*=(5,0,2,0,0,0)$, i.e., 
\begin{equation*}
    g(\bx) = 5x^2+2y^2
\end{equation*}
The boundary condition is given by $u_0(\bx)\equiv 1$.
We consider a square domain and a quarter of an annulus domain. The solution profiles are shown in \Cref{equ:helmsol} for different $k$. The error of the inverse modeling problem is measured by 
\begin{equation*}
    \mathrm{error} = \|\theta-\theta^*\|_2
\end{equation*}
where $\theta$ is the estimated parameter by solving \Cref{equ:helmopt}.

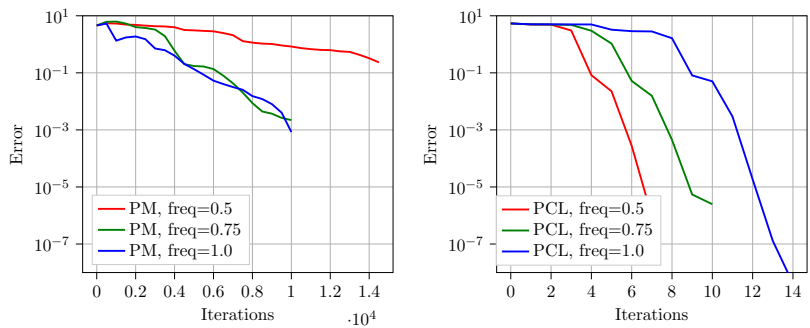
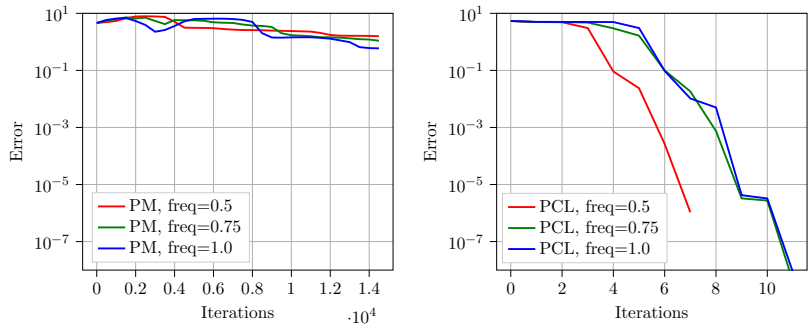
\begin{figure}[hbtp]
\centering
\begin{subfigure}[b]{1.0\textwidth}
    \scalebox{0.6}{
\begin{tikzpicture}

\begin{axis}[
legend cell align={left},
legend style={at={(0.03,0.03)}, anchor=south west, draw=white!80.0!black},
log basis y={10},
tick align=outside,
tick pos=left,
x grid style={white!69.01960784313725!black},
xlabel={Iterations},
xmajorgrids,
xmin=-724, xmax=15226,
xtick style={color=black},
y grid style={white!69.01960784313725!black},
ylabel={Error},
ymajorgrids,
ymin=1e-08, ymax=10,
ymode=log,
ytick style={color=black}
]
\addplot [very thick, red]
table {%
1 4.58257569495584
501 5.45135247300615
1001 5.35329746058426
1501 4.91910502776306
2001 4.78794620704812
2501 4.50705679052027
3001 4.31061519063046
3501 4.21817727209582
4001 3.95972470129332
4501 3.17757500617134
5001 3.08059344254725
5501 2.951415949853
6001 2.84996002788234
6501 2.48147124005749
7001 2.08826099322667
7501 1.29431543517285
8001 1.14441233844365
8501 1.05862336210903
9001 1.02654156717563
9501 0.91174185967978
10001 0.835278329460356
10501 0.735650321696974
11001 0.67678104426737
11501 0.637553757934763
12001 0.624021679900687
12501 0.56541029943367
13001 0.54208191449507
13501 0.426789599571169
14001 0.324273643517257
14501 0.232782450191256
};
\addlegendentry{PM, freq=0.5}
\addplot [very thick, green!50.0!black]
table {%
1 4.58257569495584
501 6.16532876549637
1001 6.26185659757236
1501 5.33958421532958
2001 4.0013571990914
2501 3.73437015875179
3001 3.28417766108257
3501 1.9424599472082
4001 0.588433976941736
4501 0.20217041043927
5001 0.172952786989561
5501 0.166557667043537
6001 0.1352433116366
6501 0.0798435320448481
7001 0.0427960849477299
7501 0.0204414794723116
8001 0.00868100710289147
8501 0.00443207412673499
9001 0.00370480435618101
9501 0.00263488555749241
10001 0.00220643010313388
};
\addlegendentry{PM, freq=0.75}
\addplot [very thick, blue]
table {%
1 4.58257569495584
501 5.34369559589963
1001 1.34211141794814
1501 1.73982316844389
2001 1.88026618202206
2501 1.50734802599141
3001 0.711430783835627
3501 0.621566716668801
4001 0.399216736904136
4501 0.205662781492982
5001 0.134106640876066
5501 0.0853099589701694
6001 0.0530699283707471
6501 0.0405267074888279
7001 0.0316818243490434
7501 0.0255020960401051
8001 0.0152655440774106
8501 0.012099900496458
9001 0.00801252102968886
9501 0.0040117193248313
10001 0.000841123268098174
};
\addlegendentry{PM, freq=1.0}
\end{axis}

\end{tikzpicture}}~
    \scalebox{0.6}{
\begin{tikzpicture}

\begin{axis}[
legend cell align={left},
legend style={at={(0.53,0.3)},draw=white!80.0!black},
log basis y={10},
tick align=outside,
tick pos=left,
x grid style={white!69.01960784313725!black},
xlabel={Iterations},
xmajorgrids,
xmin=-0.7, xmax=14.7,
xtick style={color=black},
y grid style={white!69.01960784313725!black},
ylabel={Error},
ymajorgrids,
ymin=1e-08, ymax=10,
ymode=log,
ytick style={color=black}
]
\addplot [very thick, red]
table {%
0 5.3851648071345
1 4.95174610321776
2 4.89310612842589
3 3.05310441641147
4 0.0829584516389769
5 0.0225890931654804
6 0.000276554332151716
7 9.34439102342043e-07
};
\addlegendentry{PCL, freq=0.5}
\addplot [very thick, green!50.0!black]
table {%
0 5.3851648071345
1 4.97482211748462
2 4.93625660960836
3 4.78884532130295
4 2.98744719166923
5 1.06022383991075
6 0.0520467285377774
7 0.0156698002055435
8 0.000445231142078771
9 5.43205098594362e-06
10 2.49021718720198e-06
};
\addlegendentry{PCL, freq=0.75}
\addplot [very thick, blue]
table {%
0 5.3851648071345
1 5.031150268425
2 5.01459819983586
3 4.9829332449841
4 4.95893444778026
5 3.25205765328274
6 2.88420549556617
7 2.82445475769549
8 1.63175368928338
9 0.0812080400657106
10 0.0503596494429706
11 0.00295024357217641
12 1.85357283909278e-05
13 1.29165108464587e-07
14 4.15290077174589e-09
};
\addlegendentry{PCL, freq=1.0}
\end{axis}

\end{tikzpicture}}
    \caption{Square domain, refinement level = 5.}
\end{subfigure}
\begin{subfigure}[b]{1.0\textwidth}
    \scalebox{0.6}{
\begin{tikzpicture}

\begin{axis}[
legend cell align={left},
legend style={at={(0.03,0.03)}, anchor=south west, draw=white!80.0!black},
log basis y={10},
tick align=outside,
tick pos=left,
x grid style={white!69.01960784313725!black},
xlabel={Iterations},
xmajorgrids,
xmin=-724, xmax=15226,
xtick style={color=black},
y grid style={white!69.01960784313725!black},
ylabel={Error},
ymajorgrids,
ymin=1e-08, ymax=10,
ymode=log,
ytick style={color=black}
]
\addplot [very thick, red]
table {%
1 4.58257569495584
501 4.93037065701811
1001 5.42945653902434
1501 6.7318490752846
2001 7.73125722751208
2501 7.88648256640602
3001 7.77528495916128
3501 7.4936113868809
4001 5.06973034881838
4501 3.14799221784228
5001 3.09506995349464
5501 3.07829199460439
6001 3.02337577719135
6501 2.82244519619423
7001 2.68483965721876
7501 2.59754440521424
8001 2.58217720315911
8501 2.55134853441263
9001 2.48602402152591
9501 2.43884376216644
10001 2.39043348705555
10501 2.34250650305374
11001 2.28859123789353
11501 2.04746409256522
12001 1.74692305922627
12501 1.64620910658359
13001 1.6330838949383
13501 1.62702167896765
14001 1.61604209876749
14501 1.57550688245939
};
\addlegendentry{PM, freq=0.5}
\addplot [very thick, green!50.0!black]
table {%
1 4.58257569495584
501 5.48096813450567
1001 6.2944607992508
1501 7.23127895710294
2001 6.63499594543672
2501 7.03029158507006
3001 5.40308117872972
3501 4.19159821072572
4001 5.79670672809599
4501 5.73566285301473
5001 5.68176227653077
5501 5.47274937422443
6001 4.84742067336098
6501 4.68621935422364
7001 4.59921520887918
7501 4.06224745880846
8001 3.71850963052531
8501 3.61495929121049
9001 3.32248847832404
9501 2.00614408079802
10001 1.70888889794574
10501 1.65298264556363
11001 1.56868177357598
11501 1.45677194307163
12001 1.43546607209863
12501 1.40669350725837
13001 1.32833427791444
13501 1.24231146931677
14001 1.20633025098667
14501 1.0933487051743
};
\addlegendentry{PM, freq=0.75}
\addplot [very thick, blue]
table {%
1 4.58257569495584
501 5.85999235634398
1001 6.56647212705286
1501 6.81069378078375
2001 5.38636594970618
2501 3.9168930318574
3001 2.26181693875539
3501 2.60263902815227
4001 3.60234163908118
4501 5.22723954292218
5001 6.33179763239942
5501 6.41577210492129
6001 6.48434334046829
6501 6.49246505614059
7001 6.3181816935264
7501 5.89130921913141
8001 4.97835101385722
8501 2.00964462325956
9001 1.42262463828938
9501 1.40317465141318
10001 1.43976436396782
10501 1.45054648308404
11001 1.44610731882584
11501 1.38161956201571
12001 1.29122192725389
12501 1.12622768899078
13001 0.976406305057688
13501 0.65508702070087
14001 0.605047988996331
14501 0.591904540994457
};
\addlegendentry{PM, freq=1.0}
\end{axis}

\end{tikzpicture}}~
    \scalebox{0.6}{
\begin{tikzpicture}

\begin{axis}[
legend cell align={left},
legend style={at={(0.55,0.3)},draw=white!80.0!black},
log basis y={10},
tick align=outside,
tick pos=left,
x grid style={white!69.01960784313725!black},
xlabel={Iterations},
xmajorgrids,
xmin=-0.55, xmax=11.55,
xtick style={color=black},
y grid style={white!69.01960784313725!black},
ylabel={Error},
ymajorgrids,
ymin=1e-08, ymax=10,
ymode=log,
ytick style={color=black}
]
\addplot [very thick, red]
table {%
0 5.3851648071345
1 4.95140640170895
2 4.88803773004046
3 3.05024401962223
4 0.0915006172092631
5 0.023681523444515
6 0.000263609151363431
7 1.06487031714277e-06
};
\addlegendentry{PCL, freq=0.5}
\addplot [very thick, green!50.0!black]
table {%
0 5.3851648071345
1 4.97324805090675
2 4.93544362553223
3 4.75976120854492
4 2.97562102022236
5 1.65036523580165
6 0.0990689026367934
7 0.0183159256256242
8 0.000751376614000289
9 3.26652694947055e-06
10 2.74611325926833e-06
11 3.91456716142055e-09
};
\addlegendentry{PCL, freq=0.75}
\addplot [very thick, blue]
table {%
0 5.3851648071345
1 5.02421423036512
2 5.00688080983944
3 4.97553077396333
4 4.9486264287561
5 3.05482302606035
6 0.0956808255079305
7 0.0103860462018544
8 0.00502294763523996
9 4.22698771308632e-06
10 3.24277795360113e-06
11 8.47032099893387e-09
};
\addlegendentry{PCL, freq=1.0}
\end{axis}

\end{tikzpicture}}
    \caption{Square domain, refinement level = 6.}
\end{subfigure}
\caption{Convergence comparison of PM and PCL for the square  domain. Note that the scale of ``Iterations'' are different. Here ``refinement'' denotes the number of mesh refinement processes from the initial domain in the isogeometric analysis discretization. Each refinement process adds an extra knot between two knots in the original knot vector in each dimension.}
\label{fig:helmsquare}
\end{figure}

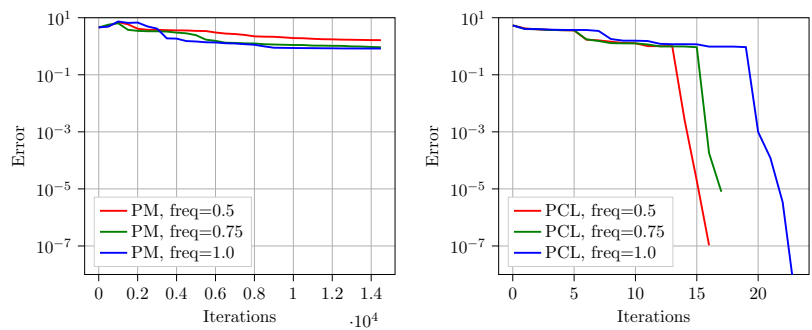
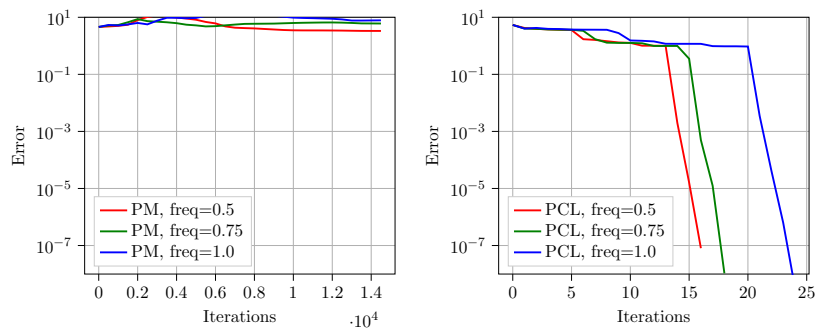
\begin{figure}[hbtp]
\centering
\begin{subfigure}[b]{1.0\textwidth}
    \scalebox{0.6}{
\begin{tikzpicture}

\begin{axis}[
legend cell align={left},
legend style={at={(0.03,0.03)}, anchor=south west, draw=white!80.0!black},
log basis y={10},
tick align=outside,
tick pos=left,
x grid style={white!69.01960784313725!black},
xlabel={Iterations},
xmajorgrids,
xmin=-724, xmax=15226,
xtick style={color=black},
y grid style={white!69.01960784313725!black},
ylabel={Error},
ymajorgrids,
ymin=1e-08, ymax=10,
ymode=log,
ytick style={color=black}
]
\addplot [very thick, red]
table {%
1 4.58257569495584
501 5.06859339732294
1001 6.95590659390533
1501 5.87430726557216
2001 4.11378224406554
2501 3.76986378811492
3001 3.75803964988065
3501 3.66488210636609
4001 3.59578873826808
4501 3.57716053805418
5001 3.45087077499217
5501 3.41481546548514
6001 3.03299669989451
6501 2.7870174791232
7001 2.67850099802229
7501 2.49361563264855
8001 2.22157723532998
8501 2.16941835664327
9001 2.14605562602751
9501 2.04811068746005
10001 1.93310171819497
10501 1.91049369352469
11001 1.82603360236971
11501 1.76623626218144
12001 1.74530742065824
12501 1.70781681686103
13001 1.68974717201499
13501 1.66620103029551
14001 1.64577829340898
14501 1.63523974778935
};
\addlegendentry{PM, freq=0.5}
\addplot [very thick, green!50.0!black]
table {%
1 4.58257569495584
501 5.71441264966296
1001 6.39796182617945
1501 3.76921490526635
2001 3.46355052166365
2501 3.3586595874512
3001 3.34180018467745
3501 3.26763736259737
4001 3.00353730006986
4501 2.84249343829667
5001 2.43859336352222
5501 1.70609451235799
6001 1.54486782593073
6501 1.33560702626854
7001 1.31083801867358
7501 1.26504023960347
8001 1.24952217959523
8501 1.17912888404302
9001 1.15274284162009
9501 1.12772029192249
10001 1.10704603211169
10501 1.10392591704829
11001 1.05492038224803
11501 1.0523937202886
12001 1.03657762065644
12501 1.02993830571398
13001 0.990363004040214
13501 0.978812946393333
14001 0.942489355114732
14501 0.922553101061653
};
\addlegendentry{PM, freq=0.75}
\addplot [very thick, blue]
table {%
1 4.58257569495584
501 4.90212162354602
1001 7.38042236677409
1501 6.57885910981656
2001 6.77108499994818
2501 4.96635595828915
3001 4.13202068060852
3501 1.88772294192585
4001 1.85094120916954
4501 1.52006020648143
5001 1.46815827736752
5501 1.37967256489322
6001 1.35833848003052
6501 1.27461997183427
7001 1.25074460626515
7501 1.18226288934314
8001 1.12792526170701
8501 0.990517581199721
9001 0.886897903000212
9501 0.877926245232686
10001 0.867148580478477
10501 0.862018680058373
11001 0.857516798778188
11501 0.850723339442773
12001 0.846989010094336
12501 0.838539009698406
13001 0.836899788995548
13501 0.833935703278989
14001 0.833131607781453
14501 0.830531519189202
};
\addlegendentry{PM, freq=1.0}
\end{axis}

\end{tikzpicture}}~
    \scalebox{0.6}{
\begin{tikzpicture}

\begin{axis}[
legend cell align={left},
legend style={at={(0.03,0.03)}, anchor=south west, draw=white!80.0!black},
log basis y={10},
tick align=outside,
tick pos=left,
x grid style={white!69.01960784313725!black},
xlabel={Iterations},
xmajorgrids,
xmin=-1.15, xmax=24.15,
xtick style={color=black},
y grid style={white!69.01960784313725!black},
ylabel={Error},
ymajorgrids,
ymin=1e-08, ymax=10,
ymode=log,
ytick style={color=black}
]
\addplot [very thick, red]
table {%
0 5.3851648071345
1 4.22764632230482
2 3.91875809427871
3 3.76332975829825
4 3.65185094078203
5 3.57574300320251
6 1.70266938607677
7 1.60850018220994
8 1.44370006185875
9 1.31118395062504
10 1.27209141407386
11 1.00940088661844
12 1.0091512990393
13 1.00509852915826
14 0.00267288553759536
15 1.9304378158234e-05
16 1.05618557770011e-07
};
\addlegendentry{PCL, freq=0.5}
\addplot [very thick, green!50.0!black]
table {%
0 5.3851648071345
1 4.02722410762472
2 3.93870233640003
3 3.72567997967823
4 3.65875034801529
5 3.64333497001573
6 1.74459304075013
7 1.52838293263339
8 1.2861496754586
9 1.26417427531002
10 1.26377257414361
11 1.15203828513221
12 0.986969115873101
13 0.986808399982718
14 0.98112324794571
15 0.924423861790778
16 0.000180634078480116
17 7.97407147373106e-06
};
\addlegendentry{PCL, freq=0.75}
\addplot [very thick, blue]
table {%
0 5.3851648071345
1 4.00546530021366
2 3.99346392616205
3 3.83070262890467
4 3.74230054258209
5 3.72016126700787
6 3.71204249478843
7 3.43711286855341
8 1.79102682689816
9 1.57096812985229
10 1.56763807208392
11 1.52547692410586
12 1.21600985913525
13 1.17951922795914
14 1.1793732126547
15 1.16876292417318
16 0.972362077584013
17 0.972259446273644
18 0.972264285834266
19 0.930574223221935
20 0.000988106831606222
21 0.000121513367667119
22 3.39784944190817e-06
23 1.73180085942386e-09
};
\addlegendentry{PCL, freq=1.0}
\end{axis}

\end{tikzpicture}}
    \caption{Pipe domain, refinement level = 5.}
\end{subfigure}
\begin{subfigure}[b]{1.0\textwidth}
    \scalebox{0.6}{
\begin{tikzpicture}

\begin{axis}[
legend cell align={left},
legend style={at={(0.03,0.03)}, anchor=south west, draw=white!80.0!black},
log basis y={10},
tick align=outside,
tick pos=left,
x grid style={white!69.01960784313725!black},
xlabel={Iterations},
xmajorgrids,
xmin=-724, xmax=15226,
xtick style={color=black},
y grid style={white!69.01960784313725!black},
ylabel={Error},
ymajorgrids,
ymin=1e-08, ymax=10,
ymode=log,
ytick style={color=black}
]
\addplot [very thick, red]
table {%
1 4.58257569495584
501 4.77335713577189
1001 4.99411561933897
1501 5.48133510157433
2001 7.96192638795548
2501 10.2461005497946
3001 11.5165962930333
3501 10.6260435197886
4001 9.77586529826383
4501 9.04850259753215
5001 8.2049246172012
5501 6.8625358429677
6001 6.09901209882914
6501 4.79679513288098
7001 4.31173492323669
7501 4.1746477948187
8001 4.07114797652649
8501 3.89790515607694
9001 3.73457952311331
9501 3.58024633989553
10001 3.49453440299654
10501 3.47047312257489
11001 3.46554610882642
11501 3.45922380410911
12001 3.44216007069477
12501 3.41721927245665
13001 3.37427508408901
13501 3.33541465389305
14001 3.32768541450057
14501 3.32331482683253
};
\addlegendentry{PM, freq=0.5}
\addplot [very thick, green!50.0!black]
table {%
1 4.58257569495584
501 5.15770842020106
1001 5.36954614739687
1501 6.70204617549202
2001 8.62248792818286
2501 7.32343285586054
3001 7.00627628002178
3501 6.68381952765748
4001 6.21660592208296
4501 5.50201143555606
5001 5.20231443245399
5501 4.78871050078841
6001 4.88582975391575
6501 5.2154751994353
7001 5.55639753869097
7501 5.86739342627966
8001 5.92390769165515
8501 5.9591728317201
9001 6.0078626924431
9501 6.16258954438198
10001 6.30829602553001
10501 6.41747050176922
11001 6.49819986423388
11501 6.56698706942305
12001 6.58746420400245
12501 6.49570317276313
13001 6.33060687629044
13501 6.11056643140074
14001 6.08519017331392
14501 6.05308953220687
};
\addlegendentry{PM, freq=0.75}
\addplot [very thick, blue]
table {%
1 4.58257569495584
501 5.36960647402006
1001 5.1621490408029
1501 5.61685872093901
2001 6.3334112325903
2501 5.64815304457045
3001 7.60070395564615
3501 9.61968277736049
4001 9.59047360654087
4501 9.25263428785579
5001 9.47972691966334
5501 11.2388922508537
6001 14.103380756798
6501 14.6930407268589
7001 14.3385615372305
7501 12.0995456707815
8001 11.5298353543313
8501 11.7122188374969
9001 11.3333000543389
9501 10.4659485748252
10001 9.72308546830607
10501 9.47194906277495
11001 9.29979839562898
11501 9.09127956342412
12001 8.87297335208049
12501 8.42164359667643
13001 7.6934912195695
13501 7.6805751696107
14001 7.74478158410372
14501 7.8097765560055
};
\addlegendentry{PM, freq=1.0}
\end{axis}

\end{tikzpicture}}~
    \scalebox{0.6}{
\begin{tikzpicture}

\begin{axis}[
legend cell align={left},
legend style={at={(0.03,0.03)}, anchor=south west, draw=white!80.0!black},
log basis y={10},
tick align=outside,
tick pos=left,
x grid style={white!69.01960784313725!black},
xlabel={Iterations},
xmajorgrids,
xmin=-1.2, xmax=25.2,
xtick style={color=black},
y grid style={white!69.01960784313725!black},
ylabel={Error},
ymajorgrids,
ymin=1e-08, ymax=10,
ymode=log,
ytick style={color=black}
]
\addplot [very thick, red]
table {%
0 5.3851648071345
1 4.22960983499139
2 3.91902674476567
3 3.76444034448763
4 3.65205081825913
5 3.57700422344256
6 1.70345505553294
7 1.60945598505105
8 1.45169891582011
9 1.31095951652739
10 1.26610463129507
11 1.00928348609512
12 1.00906077588265
13 1.0020308304844
14 0.00195613575347889
15 1.60458412114243e-05
16 8.06558603133158e-08
};
\addlegendentry{PCL, freq=0.5}
\addplot [very thick, green!50.0!black]
table {%
0 5.3851648071345
1 4.03031217545211
2 3.95247835199076
3 3.74462442379136
4 3.67441240316079
5 3.65650246039362
6 3.34363555439219
7 1.71117216651359
8 1.28843708827929
9 1.26326880443103
10 1.26304681625825
11 1.22548926713829
12 0.988378706441165
13 0.987873884781091
14 0.987820451332542
15 0.352693171779884
16 0.000496197383484186
17 1.24622076174151e-05
18 1.05693171260339e-08
};
\addlegendentry{PCL, freq=0.75}
\addplot [very thick, blue]
table {%
0 5.3851648071345
1 4.00594003331974
2 4.23854078044448
3 3.92671710588411
4 3.87075212097486
5 3.71066269713676
6 3.67883240346815
7 3.66683315550819
8 3.64497375829194
9 2.79323898957768
10 1.53897718141758
11 1.49810504090724
12 1.42737678344782
13 1.17901170197867
14 1.1740884595893
15 1.1735833829967
16 1.17191728447493
17 0.97503341055117
18 0.961425978391634
19 0.961417810541457
20 0.944198538205218
21 0.00346235031152209
22 4.13811196863093e-05
23 6.28836049588519e-07
24 3.35861601917909e-09
};
\addlegendentry{PCL, freq=1.0}
\end{axis}

\end{tikzpicture}}
    \caption{Pipe domain, refinement level = 6.}
\end{subfigure}
\caption{Convergence comparison of PM and PCL for the curved pipe domain. }
\label{fig:helmpipe}
\end{figure}

We apply both PM and PCL to this problem. The PCL method exhibits a dramatic acceleration of convergence in terms of the number of iterations. PCL can achieve the same accuracy, such as $10^{-3}$ with only a few iterations, compared with more than $10^4$ iterations for PM. Moreover, PCL can achieve better accuracy than PM, as demonstrated the error in \Cref{fig:helmsquare,fig:helmpipe}.

PCL is also more scalable to mesh sizes. We apply $h$-refinement to the meshes in \Cref{fig:helmsquare,fig:helmpipe}. We see that the PM method requires more iterations to converge to the same accuracy. The number of iterations in the PCL method is less sensitive to the mesh sizes, although a larger sparse linear system is solved per iteration. Another observation is that PCL converges faster for smaller frequencies. However, PM does not exhibit such properties in our test cases.

\subsection{Physics Based Machine Learning: Static Problem}\label{sect:nn2}

In this example, we consider the equation
\begin{equation}\label{equ:poisson4}
    \begin{aligned}
        -\nabla \cdot (\mathbf{f}(u)\nabla u) =&\;  h(\bx) & \mbox{ in } \Omega\\
        u = &\; u_0(\bx) & \mbox{ on } \partial \Omega
    \end{aligned}
\end{equation}
where $\Omega=[0,1]^2$. We assume that $\mathbf{f}(u)\in \RR^{2\times 2}$ and has the form
\begin{equation*}
    \mathbf{f}(u) = \begin{bmatrix}
        f_1(u) & 0 \\
        0 & f_2(u)
    \end{bmatrix}
\end{equation*}

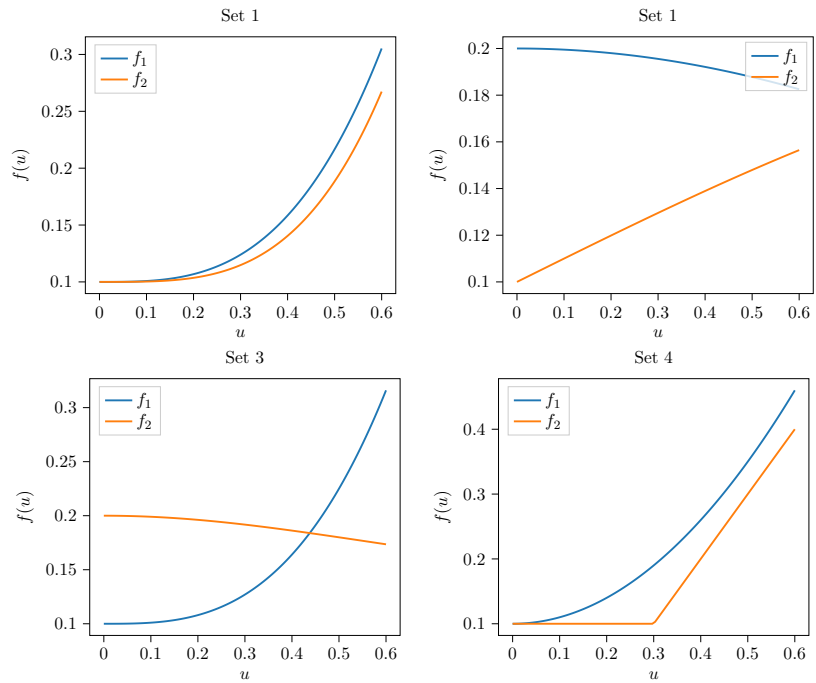
\begin{figure}[htpb]
    \centering
    \scalebox{0.6}{
\begin{tikzpicture}

\definecolor{color0}{rgb}{0.12156862745098,0.466666666666667,0.705882352941177}
\definecolor{color1}{rgb}{1,0.498039215686275,0.0549019607843137}

\begin{axis}[
legend cell align={left},
legend style={fill opacity=0.8, draw opacity=1, text opacity=1, at={(0.03,0.97)}, anchor=north west, draw=white!80.0!black},
tick align=outside,
tick pos=left,
x grid style={white!69.01960784313725!black},
xlabel={$u$},
title={Set 1},
xmin=-0.03, xmax=0.63,
xtick style={color=black},
y grid style={white!69.01960784313725!black},
ylabel={$f(u)$},
ymin=0.0897378376617387, ymax=0.315505409103487,
ytick style={color=black}
]
\addplot [very thick, color0]
table {%
0 0.1
0.00606060606060606 0.100000133597943
0.0121212121212121 0.100001145493844
0.0181818181818182 0.100004026017539
0.0242424242424242 0.100009821679226
0.0303030303030303 0.100019615834396
0.0363636363636364 0.100034519830068
0.0424242424242424 0.100055667753691
0.0484848484848485 0.100084212921215
0.0545454545454545 0.10012132534982
0.0606060606060606 0.100168189846008
0.0666666666666667 0.100226004505417
0.0727272727272727 0.100295979502382
0.0787878787878788 0.10037933609147
0.0848484848484848 0.100477305769001
0.0909090909090909 0.100591129558375
0.096969696969697 0.100722057393283
0.103030303030303 0.100871347579689
0.109090909090909 0.101040266322158
0.115151515151515 0.101230087303482
0.121212121212121 0.101442091308954
0.127272727272727 0.10167756588841
0.133333333333333 0.101937805050557
0.139393939393939 0.102224108985081
0.145454545454545 0.102537783808858
0.151515151515152 0.102880141333229
0.157575757575758 0.103252498849761
0.163636363636364 0.103656178932352
0.16969696969697 0.10409250925385
0.175757575757576 0.104562822415617
0.181818181818182 0.105068455788695
0.187878787878788 0.105610751365409
0.193939393939394 0.106191055620394
0.2 0.106810719380166
0.206060606060606 0.107471097700451
0.212121212121212 0.108173549750598
0.218181818181818 0.108919438704471
0.224242424242424 0.109710131637267
0.23030303030303 0.110546999427801
0.236363636363636 0.111431416665813
0.242424242424242 0.112364761563925
0.248484848484848 0.113348415873891
0.254545454545455 0.11438376480683
0.260606060606061 0.115472196957175
0.266666666666667 0.11661510423005
0.272727272727273 0.117813881771879
0.278787878787879 0.119069927903984
0.284848484848485 0.120384644058998
0.290909090909091 0.121759434719906
0.296969696969697 0.123195707361553
0.303030303030303 0.124694872394474
0.309090909090909 0.126258343110902
0.315151515151515 0.127887535632833
0.321212121212121 0.129583868862026
0.327272727272727 0.131348764431835
0.333333333333333 0.133183646660769
0.339393939393939 0.135089942507685
0.345454545454545 0.137069081528537
0.351515151515152 0.13912249583459
0.357575757575758 0.141251620052029
0.363636363636364 0.143457891282892
0.36969696969697 0.145742749067265
0.375757575757576 0.148107635346675
0.381818181818182 0.150553994428621
0.387878787878788 0.153083272952199
0.393939393939394 0.155696919854757
0.4 0.158396386339551
0.406060606060606 0.161183125844335
0.412121212121212 0.164058594010871
0.418181818181818 0.16702424865529
0.424242424242424 0.170081549739289
0.43030303030303 0.173231959342127
0.436363636363636 0.176476941633368
0.442424242424242 0.179817962846368
0.448484848484848 0.183256491252454
0.454545454545455 0.186793997135777
0.460606060606061 0.190431952768817
0.466666666666667 0.194171832388497
0.472727272727273 0.198015112172908
0.478787878787879 0.201963270218601
0.484848484848485 0.206017786518432
0.490909090909091 0.210180142939948
0.496969696969697 0.214451823204275
0.503030303030303 0.21883431286551
0.509090909090909 0.223329099290594
0.515151515151515 0.227937671639633
0.521212121212121 0.232661520846677
0.527272727272727 0.237502139600923
0.533333333333333 0.242461022328335
0.539393939393939 0.247539665173667
0.545454545454545 0.252739565982868
0.551515151515152 0.258062224285877
0.557575757575758 0.263509141279761
0.563636363636364 0.269081819812227
0.56969696969697 0.274781764365457
0.575757575757576 0.280610481040284
0.581818181818182 0.286569477540686
0.587878787878788 0.292660263158586
0.593939393939394 0.298884348758959
0.6 0.305243246765226
};
\addlegendentry{$f_1$}
\addplot [very thick, color1]
table {%
0 0.1
0.00606060606060606 0.100000017330305
0.0121212121212121 0.100000196070014
0.0181818181818182 0.100000810458137
0.0242424242424242 0.100002218278985
0.0303030303030303 0.100004843967386
0.0363636363636364 0.100009169287109
0.0424242424242424 0.100015727124881
0.0484848484848485 0.100025096961805
0.0545454545454545 0.100037901376094
0.0606060606060606 0.100054803234985
0.0666666666666667 0.10007650337474
0.0727272727272727 0.100103738641497
0.0787878787878788 0.100137280207937
0.0848484848484848 0.100177932106427
0.0909090909090909 0.100226529935821
0.096969696969697 0.100283939710074
0.103030303030303 0.100351056824494
0.109090909090909 0.100428805120843
0.115151515151515 0.100518136036442
0.121212121212121 0.100620027825426
0.127272727272727 0.100735484842459
0.133333333333333 0.100865536880996
0.139393939393939 0.101011238559482
0.145454545454545 0.101173668749977
0.151515151515152 0.101353930044525
0.157575757575758 0.101553148255274
0.163636363636364 0.101772471944952
0.16969696969697 0.102013071984729
0.175757575757576 0.102276141136902
0.181818181818182 0.102562893660173
0.187878787878788 0.102874564935536
0.193939393939394 0.103212411111059
0.2 0.103577708764
0.206060606060606 0.103971754578909
0.212121212121212 0.104395865040483
0.218181818181818 0.104851376140085
0.224242424242424 0.10533964309494
0.23030303030303 0.105862040079122
0.236363636363636 0.106419959965531
0.242424242424242 0.107014814078128
0.248484848484848 0.107648031953763
0.254545454545455 0.108321061113007
0.260606060606061 0.10903536683941
0.266666666666667 0.109792431966707
0.272727272727273 0.110593756673487
0.278787878787879 0.111440858284908
0.284848484848485 0.112335271081058
0.290909090909091 0.113278546111613
0.296969696969697 0.114272251016426
0.303030303030303 0.115317969851773
0.309090909090909 0.116417302921938
0.315151515151515 0.117571866615875
0.321212121212121 0.118783293248707
0.327272727272727 0.120053230907818
0.333333333333333 0.121383343303319
0.339393939393939 0.122775309622698
0.345454545454545 0.124230824389438
0.351515151515152 0.125751597325461
0.357575757575758 0.127339353217187
0.363636363636364 0.128995831785089
0.36969696969697 0.130722787556575
0.375757575757576 0.132521989742055
0.381818181818182 0.134395222114082
0.387878787878788 0.136344282889417
0.393939393939394 0.138370984613921
0.4 0.140477154050155
0.406060606060606 0.142664632067584
0.412121212121212 0.144935273535282
0.418181818181818 0.14729094721706
0.424242424242424 0.149733535668903
0.43030303030303 0.152264935138655
0.436363636363636 0.154887055467853
0.442424242424242 0.157601819995656
0.448484848484848 0.160411165464767
0.454545454545455 0.163317041929305
0.460606060606061 0.166321412664552
0.466666666666667 0.169426254078502
0.472727272727273 0.172633555625173
0.478787878787879 0.175945319719607
0.484848484848485 0.179363561654511
0.490909090909091 0.182890309518486
0.496969696969697 0.186527604115798
0.503030303030303 0.190277498887631
0.509090909090909 0.194142059834796
0.515151515151515 0.198123365441834
0.521212121212121 0.202223506602477
0.527272727272727 0.206444586546436
0.533333333333333 0.210788720767465
0.539393939393939 0.215258036952665
0.545454545454545 0.219854674913009
0.551515151515152 0.224580786515026
0.557575757575758 0.22943853561364
0.563636363636364 0.234430097986108
0.56969696969697 0.239557661267052
0.575757575757576 0.244823424884524
0.581818181818182 0.250229599997113
0.587878787878788 0.25577840943203
0.593939393939394 0.261472087624179
0.6 0.26731288055616
};
\addlegendentry{$f_2$}
\end{axis}

\end{tikzpicture}}~
    \scalebox{0.6}{
\begin{tikzpicture}

\definecolor{color0}{rgb}{0.12156862745098,0.466666666666667,0.705882352941177}
\definecolor{color1}{rgb}{1,0.498039215686275,0.0549019607843137}

\begin{axis}[
legend cell align={left},
legend style={fill opacity=0.8, draw opacity=1, text opacity=1, draw=white!80.0!black},
tick align=outside,
tick pos=left,
x grid style={white!69.01960784313725!black},
xlabel={$u$},
title={Set 1},
xmin=-0.03, xmax=0.63,
xtick style={color=black},
y grid style={white!69.01960784313725!black},
ylabel={$f(u)$},
ymin=0.095, ymax=0.205,
ytick style={color=black}
]
\addplot [very thick, color0]
table {%
0 0.2
0.00606060606060606 0.19999816345833
0.0121212121212121 0.199992653900779
0.0181818181818182 0.199983471529717
0.0242424242424242 0.199970616682421
0.0303030303030303 0.199954089831059
0.0363636363636364 0.199933891582676
0.0424242424242424 0.199910022679172
0.0484848484848485 0.199882483997271
0.0545454545454545 0.19985127654849
0.0606060606060606 0.199816401479108
0.0666666666666667 0.199777860070112
0.0727272727272727 0.199735653737162
0.0787878787878788 0.199689784030532
0.0848484848484848 0.199640252635053
0.0909090909090909 0.199587061370056
0.096969696969697 0.1995302121893
0.103030303030303 0.199469707180902
0.109090909090909 0.199405548567263
0.115151515151515 0.19933773870498
0.121212121212121 0.199266280084768
0.127272727272727 0.19919117533136
0.133333333333333 0.199112427203418
0.139393939393939 0.199030038593425
0.145454545454545 0.198944012527583
0.151515151515152 0.198854352165702
0.157575757575758 0.198761060801082
0.163636363636364 0.198664141860392
0.16969696969697 0.198563598903546
0.175757575757576 0.198459435623569
0.181818181818182 0.198351655846467
0.187878787878788 0.19824026353108
0.193939393939394 0.198125262768941
0.2 0.198006657784124
0.206060606060606 0.197884452933089
0.212121212121212 0.197758652704521
0.218181818181818 0.197629261719168
0.224242424242424 0.197496284729668
0.23030303030303 0.197359726620378
0.236363636363636 0.197219592407191
0.242424242424242 0.197075887237352
0.248484848484848 0.196928616389273
0.254545454545455 0.196777785272334
0.260606060606061 0.196623399426688
0.266666666666667 0.196465464523056
0.272727272727273 0.196303986362519
0.278787878787879 0.196138970876304
0.284848484848485 0.195970424125567
0.290909090909091 0.195798352301171
0.296969696969697 0.195622761723457
0.303030303030303 0.195443658842015
0.309090909090909 0.19526105023544
0.315151515151515 0.195074942611101
0.321212121212121 0.194885342804885
0.327272727272727 0.194692257780952
0.333333333333333 0.194495694631474
0.339393939393939 0.19429566057638
0.345454545454545 0.194092162963088
0.351515151515152 0.193885209266235
0.357575757575758 0.193674807087403
0.363636363636364 0.193460964154838
0.36969696969697 0.193243688323171
0.375757575757576 0.193022987573122
0.381818181818182 0.192798870011216
0.387878787878788 0.192571343869476
0.393939393939394 0.192340417505128
0.4 0.192106099400289
0.406060606060606 0.191868398161658
0.412121212121212 0.191627322520201
0.418181818181818 0.191382881330826
0.424242424242424 0.191135083572062
0.43030303030303 0.190883938345727
0.436363636363636 0.190629454876595
0.442424242424242 0.190371642512055
0.448484848484848 0.190110510721771
0.454545454545455 0.189846069097331
0.460606060606061 0.189578327351896
0.466666666666667 0.189307295319843
0.472727272727273 0.189032982956405
0.478787878787879 0.188755400337304
0.484848484848485 0.188474557658381
0.490909090909091 0.18819046523522
0.496969696969697 0.187903133502775
0.503030303030303 0.187612573014978
0.509090909090909 0.187318794444359
0.515151515151515 0.187021808581649
0.521212121212121 0.186721626335386
0.527272727272727 0.186418258731515
0.533333333333333 0.186111716912981
0.539393939393939 0.18580201213932
0.545454545454545 0.185489155786246
0.551515151515152 0.185173159345234
0.557575757575758 0.184854034423097
0.563636363636364 0.184531792741558
0.56969696969697 0.184206446136825
0.575757575757576 0.183878006559147
0.581818181818182 0.183546486072386
0.587878787878788 0.183211896853564
0.593939393939394 0.182874251192422
0.6 0.182533561490968
};
\addlegendentry{$f_1$}
\addplot [very thick, color1]
table {%
0 0.1
0.00606060606060606 0.100606056895871
0.0121212121212121 0.101212091530767
0.0181818181818182 0.101818081644531
0.0242424242424242 0.102424004978641
0.0303030303030303 0.103029839277028
0.0363636363636364 0.103635562286893
0.0424242424242424 0.104241151759526
0.0484848484848485 0.10484658545112
0.0545454545454545 0.105451841123591
0.0606060606060606 0.106056896545394
0.0666666666666667 0.106661729492339
0.0727272727272727 0.107266317748409
0.0787878787878788 0.107870639106571
0.0848484848484848 0.1084746713696
0.0909090909090909 0.109078392350887
0.096969696969697 0.109681779875257
0.103030303030303 0.110284811779783
0.109090909090909 0.110887465914602
0.115151515151515 0.111489720143724
0.121212121212121 0.112091552345849
0.127272727272727 0.11269294041518
0.133333333333333 0.113293862262231
0.139393939393939 0.113894295814643
0.145454545454545 0.114494219017989
0.151515151515152 0.115093609836592
0.157575757575758 0.115692446254327
0.163636363636364 0.116290706275433
0.16969696969697 0.11688836792532
0.175757575757576 0.11748540925138
0.181818181818182 0.118081808323785
0.187878787878788 0.118677543236301
0.193939393939394 0.119272592107088
0.2 0.119866933079506
0.206060606060606 0.120460544322915
0.212121212121212 0.121053404033479
0.218181818181818 0.121645490434967
0.224242424242424 0.122236781779553
0.23030303030303 0.122827256348611
0.236363636363636 0.12341689245352
0.242424242424242 0.124005668436454
0.248484848484848 0.12459356267118
0.254545454545455 0.125180553563853
0.260606060606061 0.125766619553808
0.266666666666667 0.126351739114354
0.272727272727273 0.12693589075356
0.278787878787879 0.127519053015051
0.284848484848485 0.12810120447879
0.290909090909091 0.12868232376187
0.296969696969697 0.129262389519294
0.303030303030303 0.129841380444764
0.309090909090909 0.130419275271461
0.315151515151515 0.130996052772826
0.321212121212121 0.131571691763341
0.327272727272727 0.132146171099305
0.333333333333333 0.132719469679615
0.339393939393939 0.133291566446536
0.345454545454545 0.133862440386476
0.351515151515152 0.134432070530759
0.357575757575758 0.135000435956397
0.363636363636364 0.135567515786853
0.36969696969697 0.136133289192813
0.375757575757576 0.136697735392947
0.381818181818182 0.137260833654677
0.387878787878788 0.137822563294934
0.393939393939394 0.138382903680919
0.4 0.138941834230865
0.406060606060606 0.139499334414786
0.412121212121212 0.140055383755235
0.418181818181818 0.140609961828057
0.424242424242424 0.141163048263137
0.43030303030303 0.141714622745149
0.436363636363636 0.142264665014303
0.442424242424242 0.142813154867089
0.448484848484848 0.143360072157015
0.454545454545455 0.143905396795356
0.460606060606061 0.144449108751882
0.466666666666667 0.1449911880556
0.472727272727273 0.145531614795485
0.478787878787879 0.146070369121214
0.484848484848485 0.146607431243889
0.490909090909091 0.147142781436773
0.496969696969697 0.147676400036007
0.503030303030303 0.148208267441334
0.509090909090909 0.148738364116821
0.515151515151515 0.149266670591576
0.521212121212121 0.149793167460462
0.527272727272727 0.15031783538481
0.533333333333333 0.15084065509313
0.539393939393939 0.151361607381818
0.545454545454545 0.151880673115863
0.551515151515152 0.152397833229547
0.557575757575758 0.152913068727148
0.563636363636364 0.153426360683638
0.56969696969697 0.153937690245375
0.575757575757576 0.154447038630799
0.581818181818182 0.154954387131117
0.587878787878788 0.155459717110997
0.593939393939394 0.155963010009249
0.6 0.156464247339504
};
\addlegendentry{$f_2$}
\end{axis}

\end{tikzpicture}}
    \scalebox{0.6}{
\begin{tikzpicture}

\definecolor{color0}{rgb}{0.12156862745098,0.466666666666667,0.705882352941177}
\definecolor{color1}{rgb}{1,0.498039215686275,0.0549019607843137}

\begin{axis}[
legend cell align={left},
legend style={fill opacity=0.8, draw opacity=1, text opacity=1, at={(0.03,0.97)}, anchor=north west, draw=white!80.0!black},
tick align=outside,
tick pos=left,
x grid style={white!69.01960784313725!black},
xlabel={$u$},
title={Set 3},
xmin=-0.03, xmax=0.63,
xtick style={color=black},
y grid style={white!69.01960784313725!black},
ylabel={$f(u)$},
ymin=0.0892, ymax=0.3268,
ytick style={color=black}
]
\addplot [very thick, color0]
table {%
0 0.1
0.00606060606060606 0.100000222611793
0.0121212121212121 0.100001780894343
0.0181818181818182 0.100006010518407
0.0242424242424242 0.100014247154743
0.0303030303030303 0.100027826474107
0.0363636363636364 0.100048084147258
0.0424242424242424 0.100076355844951
0.0484848484848485 0.100113977237944
0.0545454545454545 0.100162283996995
0.0606060606060606 0.10022261179286
0.0666666666666667 0.100296296296296
0.0727272727272727 0.100384673178062
0.0787878787878788 0.100489078108913
0.0848484848484848 0.100610846759607
0.0909090909090909 0.100751314800902
0.096969696969697 0.100911817903553
0.103030303030303 0.10109369173832
0.109090909090909 0.101298271975958
0.115151515151515 0.101526894287225
0.121212121212121 0.101780894342878
0.127272727272727 0.102061607813674
0.133333333333333 0.10237037037037
0.139393939393939 0.102708517683724
0.145454545454545 0.103077385424493
0.151515151515152 0.103478309263433
0.157575757575758 0.103912624871303
0.163636363636364 0.104381667918858
0.16969696969697 0.104886774076857
0.175757575757576 0.105429279016056
0.181818181818182 0.106010518407213
0.187878787878788 0.106631827921084
0.193939393939394 0.107294543228428
0.2 0.108
0.206060606060606 0.108749533906559
0.212121212121212 0.109544480618861
0.218181818181818 0.110386175807663
0.224242424242424 0.111275955143724
0.23030303030303 0.112215154297799
0.236363636363636 0.113205108940646
0.242424242424242 0.114247154743023
0.248484848484848 0.115342627375685
0.254545454545455 0.116492862509391
0.260606060606061 0.117699195814898
0.266666666666667 0.118962962962963
0.272727272727273 0.120285499624343
0.278787878787879 0.121668141469794
0.284848484848485 0.123112224170075
0.290909090909091 0.124619083395943
0.296969696969697 0.126190054818154
0.303030303030303 0.127826474107466
0.309090909090909 0.129529676934636
0.315151515151515 0.13130099897042
0.321212121212121 0.133141775885578
0.327272727272727 0.135053343350864
0.333333333333333 0.137037037037037
0.339393939393939 0.139094192614854
0.345454545454545 0.141226145755071
0.351515151515152 0.143434232128447
0.357575757575758 0.145719787405738
0.363636363636364 0.148084147257701
0.36969696969697 0.150528647355094
0.375757575757576 0.153054623368673
0.381818181818182 0.155663410969196
0.387878787878788 0.15835634582742
0.393939393939394 0.161134763614102
0.4 0.164
0.406060606060606 0.16695339065587
0.412121212121212 0.16999627125247
0.418181818181818 0.173129977460556
0.424242424242424 0.176355844950886
0.43030303030303 0.179675209394218
0.436363636363636 0.183089406461307
0.442424242424242 0.186599771822912
0.448484848484848 0.19020764114979
0.454545454545455 0.193914350112697
0.460606060606061 0.197721234382391
0.466666666666667 0.20162962962963
0.472727272727273 0.205640871525169
0.478787878787879 0.209756295739767
0.484848484848485 0.21397723794418
0.490909090909091 0.218305033809166
0.496969696969697 0.222741019005482
0.503030303030303 0.227286529203885
0.509090909090909 0.231942900075131
0.515151515151515 0.23671146728998
0.521212121212121 0.241593566519186
0.527272727272727 0.246590533433509
0.533333333333333 0.251703703703704
0.539393939393939 0.256934413000529
0.545454545454545 0.262283996994741
0.551515151515152 0.267753791357097
0.557575757575758 0.273345131758355
0.563636363636364 0.279059353869271
0.56969696969697 0.284897793360603
0.575757575757576 0.290861785903108
0.581818181818182 0.296952667167543
0.587878787878788 0.303171772824665
0.593939393939394 0.309520438545232
0.6 0.316
};
\addlegendentry{$f_1$}
\addplot [very thick, color1]
table {%
0 0.2
0.00606060606060606 0.199996327040329
0.0121212121212121 0.199985309780014
0.0181818181818182 0.199966953073364
0.0242424242424242 0.199941265004956
0.0303030303030303 0.199908256880734
0.0363636363636364 0.199867943215583
0.0424242424242424 0.199820341717387
0.0484848484848485 0.199765473267617
0.0545454545454545 0.199703361898484
0.0606060606060606 0.199634034766697
0.0666666666666667 0.199557522123894
0.0727272727272727 0.199473857283788
0.0787878787878788 0.199383076586114
0.0848484848484848 0.199285219357427
0.0909090909090909 0.199180327868852
0.096969696969697 0.199068447290855
0.103030303030303 0.198949625645126
0.109090909090909 0.198823913753675
0.115151515151515 0.198691365185239
0.121212121212121 0.198552036199095
0.127272727272727 0.198405985686402
0.133333333333333 0.19825327510917
0.139393939393939 0.198093968436982
0.145454545454545 0.19792813208158
0.151515151515152 0.197755834829443
0.157575757575758 0.197577147772481
0.163636363636364 0.197392144236961
0.16969696969697 0.197200899710807
0.175757575757576 0.197003491769401
0.181818181818182 0.1968
0.187878787878788 0.196590505924927
0.193939393939394 0.196375092923643
0.2 0.196153846153846
0.206060606060606 0.195926852471724
0.212121212121212 0.195694200351494
0.218181818181818 0.195455979804355
0.224242424242424 0.195212282296985
0.23030303030303 0.194963200669713
0.236363636363636 0.194708829054477
0.242424242424242 0.194449262792715
0.248484848484848 0.194184598353283
0.254545454545455 0.193914933250543
0.260606060606061 0.193640365962716
0.266666666666667 0.193360995850622
0.272727272727273 0.193076923076923
0.278787878787879 0.192788248525953
0.284848484848485 0.192495073724264
0.290909090909091 0.192197500761963
0.296969696969697 0.191895632214946
0.303030303030303 0.191589571068124
0.309090909090909 0.19127942063971
0.315151515151515 0.190965284506666
0.321212121212121 0.190647266431378
0.327272727272727 0.190325470289639
0.333333333333333 0.19
0.339393939393939 0.189670959454563
0.345454545454545 0.18933845245127
0.351515151515152 0.189002582627742
0.357575757575758 0.18866345339673
0.363636363636364 0.188321167883212
0.36969696969697 0.187975828863181
0.375757575757576 0.187627538704175
0.381818181818182 0.187276399307559
0.387878787878788 0.186922512052616
0.393939393939394 0.186565977742448
0.4 0.186206896551724
0.406060606060606 0.185845367976288
0.412121212121212 0.18548149078464
0.418181818181818 0.1851153629713
0.424242424242424 0.184747081712062
0.43030303030303 0.184376743321143
0.436363636363636 0.184004443210219
0.442424242424242 0.183630275849358
0.448484848484848 0.183254334729825
0.454545454545455 0.182876712328767
0.460606060606061 0.182497500075755
0.466666666666667 0.182116788321168
0.472727272727273 0.181734666306404
0.478787878787879 0.181351222135899
0.484848484848485 0.180966542750929
0.490909090909091 0.180580713905168
0.496969696969697 0.180193820141978
0.503030303030303 0.179805944773407
0.509090909090909 0.179417169860856
0.515151515151515 0.179027576197388
0.521212121212121 0.178637243291644
0.527272727272727 0.178246249353337
0.533333333333333 0.177854671280277
0.539393939393939 0.177462584646902
0.545454545454545 0.177070063694268
0.551515151515152 0.176677181321467
0.557575757575758 0.176284009078428
0.563636363636364 0.17589061716006
0.56969696969697 0.175497074401708
0.575757575757576 0.175103448275862
0.581818181818182 0.174709804890096
0.587878787878788 0.174316208986188
0.593939393939394 0.173922723940373
0.6 0.173529411764706
};
\addlegendentry{$f_2$}
\end{axis}

\end{tikzpicture}}~
    \scalebox{0.6}{
\begin{tikzpicture}

\definecolor{color0}{rgb}{0.12156862745098,0.466666666666667,0.705882352941177}
\definecolor{color1}{rgb}{1,0.498039215686275,0.0549019607843137}

\begin{axis}[
legend cell align={left},
legend style={fill opacity=0.8, draw opacity=1, text opacity=1, at={(0.03,0.97)}, anchor=north west, draw=white!80.0!black},
tick align=outside,
tick pos=left,
x grid style={white!69.01960784313725!black},
xlabel={$u$},
title={Set 4},
xmin=-0.03, xmax=0.63,
xtick style={color=black},
y grid style={white!69.01960784313725!black},
ylabel={$f(u)$},
ymin=0.082, ymax=0.478,
ytick style={color=black}
]
\addplot [very thick, color0]
table {%
0 0.1
0.00606060606060606 0.100036730945822
0.0121212121212121 0.100146923783287
0.0181818181818182 0.100330578512397
0.0242424242424242 0.10058769513315
0.0303030303030303 0.100918273645546
0.0363636363636364 0.101322314049587
0.0424242424242424 0.101799816345271
0.0484848484848485 0.102350780532599
0.0545454545454545 0.10297520661157
0.0606060606060606 0.103673094582186
0.0666666666666667 0.104444444444444
0.0727272727272727 0.105289256198347
0.0787878787878788 0.106207529843893
0.0848484848484848 0.107199265381084
0.0909090909090909 0.108264462809917
0.096969696969697 0.109403122130395
0.103030303030303 0.110615243342516
0.109090909090909 0.111900826446281
0.115151515151515 0.11325987144169
0.121212121212121 0.114692378328742
0.127272727272727 0.116198347107438
0.133333333333333 0.117777777777778
0.139393939393939 0.119430670339761
0.145454545454545 0.121157024793388
0.151515151515152 0.122956841138659
0.157575757575758 0.124830119375574
0.163636363636364 0.126776859504132
0.16969696969697 0.128797061524334
0.175757575757576 0.13089072543618
0.181818181818182 0.133057851239669
0.187878787878788 0.135298438934803
0.193939393939394 0.137612488521579
0.2 0.14
0.206060606060606 0.142460973370064
0.212121212121212 0.144995408631772
0.218181818181818 0.147603305785124
0.224242424242424 0.150284664830119
0.23030303030303 0.153039485766758
0.236363636363636 0.155867768595041
0.242424242424242 0.158769513314968
0.248484848484848 0.161744719926538
0.254545454545455 0.164793388429752
0.260606060606061 0.16791551882461
0.266666666666667 0.171111111111111
0.272727272727273 0.174380165289256
0.278787878787879 0.177722681359045
0.284848484848485 0.181138659320478
0.290909090909091 0.184628099173554
0.296969696969697 0.188191000918274
0.303030303030303 0.191827364554637
0.309090909090909 0.195537190082645
0.315151515151515 0.199320477502296
0.321212121212121 0.20317722681359
0.327272727272727 0.207107438016529
0.333333333333333 0.211111111111111
0.339393939393939 0.215188246097337
0.345454545454545 0.219338842975207
0.351515151515152 0.22356290174472
0.357575757575758 0.227860422405877
0.363636363636364 0.232231404958678
0.36969696969697 0.236675849403122
0.375757575757576 0.24119375573921
0.381818181818182 0.245785123966942
0.387878787878788 0.250449954086318
0.393939393939394 0.255188246097337
0.4 0.26
0.406060606060606 0.264885215794307
0.412121212121212 0.269843893480257
0.418181818181818 0.274876033057851
0.424242424242424 0.279981634527089
0.43030303030303 0.285160697887971
0.436363636363636 0.290413223140496
0.442424242424242 0.295739210284665
0.448484848484848 0.301138659320478
0.454545454545455 0.306611570247934
0.460606060606061 0.312157943067034
0.466666666666667 0.317777777777778
0.472727272727273 0.323471074380165
0.478787878787879 0.329237832874197
0.484848484848485 0.335078053259871
0.490909090909091 0.34099173553719
0.496969696969697 0.346978879706152
0.503030303030303 0.353039485766758
0.509090909090909 0.359173553719008
0.515151515151515 0.365381083562902
0.521212121212121 0.371662075298439
0.527272727272727 0.37801652892562
0.533333333333333 0.384444444444444
0.539393939393939 0.390945821854913
0.545454545454545 0.397520661157025
0.551515151515152 0.404168962350781
0.557575757575758 0.41089072543618
0.563636363636364 0.417685950413223
0.56969696969697 0.42455463728191
0.575757575757576 0.431496786042241
0.581818181818182 0.438512396694215
0.587878787878788 0.445601469237833
0.593939393939394 0.452764003673095
0.6 0.46
};
\addlegendentry{$f_1$}
\addplot [very thick, color1]
table {%
0 0.1
0.00606060606060606 0.1
0.0121212121212121 0.1
0.0181818181818182 0.1
0.0242424242424242 0.1
0.0303030303030303 0.1
0.0363636363636364 0.1
0.0424242424242424 0.1
0.0484848484848485 0.1
0.0545454545454545 0.1
0.0606060606060606 0.1
0.0666666666666667 0.1
0.0727272727272727 0.1
0.0787878787878788 0.1
0.0848484848484848 0.1
0.0909090909090909 0.1
0.096969696969697 0.1
0.103030303030303 0.1
0.109090909090909 0.1
0.115151515151515 0.1
0.121212121212121 0.1
0.127272727272727 0.1
0.133333333333333 0.1
0.139393939393939 0.1
0.145454545454545 0.1
0.151515151515152 0.1
0.157575757575758 0.1
0.163636363636364 0.1
0.16969696969697 0.1
0.175757575757576 0.1
0.181818181818182 0.1
0.187878787878788 0.1
0.193939393939394 0.1
0.2 0.1
0.206060606060606 0.1
0.212121212121212 0.1
0.218181818181818 0.1
0.224242424242424 0.1
0.23030303030303 0.1
0.236363636363636 0.1
0.242424242424242 0.1
0.248484848484848 0.1
0.254545454545455 0.1
0.260606060606061 0.1
0.266666666666667 0.1
0.272727272727273 0.1
0.278787878787879 0.1
0.284848484848485 0.1
0.290909090909091 0.1
0.296969696969697 0.1
0.303030303030303 0.103030303030303
0.309090909090909 0.109090909090909
0.315151515151515 0.115151515151515
0.321212121212121 0.121212121212121
0.327272727272727 0.127272727272727
0.333333333333333 0.133333333333333
0.339393939393939 0.139393939393939
0.345454545454545 0.145454545454545
0.351515151515152 0.151515151515152
0.357575757575758 0.157575757575758
0.363636363636364 0.163636363636364
0.36969696969697 0.16969696969697
0.375757575757576 0.175757575757576
0.381818181818182 0.181818181818182
0.387878787878788 0.187878787878788
0.393939393939394 0.193939393939394
0.4 0.2
0.406060606060606 0.206060606060606
0.412121212121212 0.212121212121212
0.418181818181818 0.218181818181818
0.424242424242424 0.224242424242424
0.43030303030303 0.23030303030303
0.436363636363636 0.236363636363636
0.442424242424242 0.242424242424242
0.448484848484848 0.248484848484849
0.454545454545455 0.254545454545455
0.460606060606061 0.260606060606061
0.466666666666667 0.266666666666667
0.472727272727273 0.272727272727273
0.478787878787879 0.278787878787879
0.484848484848485 0.284848484848485
0.490909090909091 0.290909090909091
0.496969696969697 0.296969696969697
0.503030303030303 0.303030303030303
0.509090909090909 0.309090909090909
0.515151515151515 0.315151515151515
0.521212121212121 0.321212121212121
0.527272727272727 0.327272727272727
0.533333333333333 0.333333333333333
0.539393939393939 0.339393939393939
0.545454545454545 0.345454545454545
0.551515151515152 0.351515151515152
0.557575757575758 0.357575757575758
0.563636363636364 0.363636363636364
0.56969696969697 0.36969696969697
0.575757575757576 0.375757575757576
0.581818181818182 0.381818181818182
0.587878787878788 0.387878787878788
0.593939393939394 0.393939393939394
0.6 0.4
};
\addlegendentry{$f_2$}
\end{axis}

\end{tikzpicture}}
    \caption{Test function sets. Note in test function set 4, $f_2(u)$ does not have continuous derivatives at $u=0.3$. }
    \label{fig:test}
\end{figure}

We test four sets of $\mathbf{f}(u)$ (\Cref{fig:test})
\begin{enumerate}
    \item Test Function Set 1
    \begin{equation*}
        f_1(u) = 0.1 + u^{3.1},\qquad f_2(u) = 0.1 + u^{3.5}
    \end{equation*}
    \item Test Function Set 2
     \begin{equation*}
        f_1(u) = 0.1 + 0.1\cos(u),\qquad f_2(u) = 0.1 + 0.1\sin(u)
    \end{equation*}
    \item Test Function Set 3
    \begin{equation*}
        f_1(u) = 0.1 + u^3,\qquad f_2(u) = 0.1 + \frac{0.1}{1+u^2}
    \end{equation*}
    \item Test Function Set 4
    \begin{equation*}
        f_1(u) = 0.1 + u^2,\qquad f_2(u) = 0.1 + \max(0, u-0.3)
    \end{equation*}
\end{enumerate}
We use a deep neural network to approximate $\mathbf{f}$, which takes $u$ as input and outputs $f_1(u;\theta)$ and $f_2(u;\theta)$ ($\theta$ is the weights and biases)
\begin{equation*}
    \mathbf{f}_{\theta}(u) = 
    \begin{bmatrix}
        f_1(u; \theta) & 0 \\
        0 & f_2(u; \theta)
    \end{bmatrix}
\end{equation*}
The neural network is a $k$ hidden layer (we test multiple options: $k=1$, \dots, $5$) and has 20 neurons per layer and \texttt{tanh} activation functions since it is continuously differentiable and bounded so it does not create extreme values in the intermediate layers. 

The full solution on the grid is solved numerically and used as an observation, but the codes developed for this problem also work for sparse solutions. For PCL, we discretize \Cref{equ:poisson4} with the finite difference method on a  $31\times 31$ uniform grid. We demonstrate that even with this simple fully-connected architecture, the neural network can learn $\mathbf{f}(u)$ reasonably well. We compare PCL with PM; we vary the neural network architectures (different hidden layers) and penalty parameters $\lambda$ for PM, which we denote PM-$\lambda$. The error is reported by 
\begin{equation*}
    \sqrt{\sum_{i=1}^{N+1} (f_1(u_i) - f_1(u_i;\theta))^2 + \sum_{i=1}^{N+1} (f_2(u_i) - f_2(u_i;\theta))^2}
\end{equation*}
where $f_1(u;\theta)$ and $f_2(u;\theta)$ are the first and second outputs of the neural network with parameter $\theta$. $u_i=0.6\frac{i-1}{N}$ for $i=1,2,\ldots, N+1$ ($N=99$ in our case); we choose $0.6$ since $[0,0.6]$ is the region where the solution $u$ is. We run the experiments for sufficiently long time so that the parameters nearly converge to a local minimum. 

\Cref{tab:func1,tab:func2,tab:func3,tab:func4} show the error for the four test function sets. Additionally, we show the relative error for each $u$ in \Cref{fig:fuerr} for PCL and PM-0.01. We can see that PCL outperforms PM  in almost all cases, regardless of the neural network architectures. PM performs better than PCL only in a few cases, such as in the test function set 1 with $\lambda=0.01$ and $\lambda=1$ and the test function set 3 with $\lambda=0.01$; for other $\lambda$ and neural network architectures, PM fails to converge to the true functions. The sensitivity to the neural network hyperparameters and penalty parameters can be a difficulty when applying the penalty method. PM introduces more dependent variables for optimization, and, therefore, it is likely that the optimizer converges to a local minimum that is nonphysical. Note that the accuracy does not necessarily improve as we increase the number of hidden layers because larger neural networks are harder to optimize.

\begin{table}[htpb]
	\centering
	\resizebox{\textwidth}{!}{%
		\begin{tabular}{@{}llllll@{}}
			\toprule
			Method\textbackslash{}Hidden Layers & 1 & 2 & 3 & 4 & 5 \\ \midrule
			PCL & \textbf{$1.6\times 10^{-2}$} & \textbf{$1.1\times 10^{-2}$} & \textbf{$2.7\times 10^{-2}$} & $4.5\times 10^{-2}$ & \textbf{$3.5\times 10^{-2}$} \\
			PM-0.0 & 1.7 & $9.7\times 10^{-1}$ & $7.8\times 10^{-1}$ & \textbf{$1.9\times 10^{-3}$} & 2.8 \\
			PM-0.1 & 7.2 & 5.3 & 3.7 & 8.8 & 2.1 \\
			PM-1 & 1.8 & 5.2 & 2.9 & $4.2\times 10^{-3}$ & 1.7 \\
			PM-10 & 7.4 & 2.8 & 1.9 & 4.7 & 1.8 \\
			PM-100 & 2.1 & 2.6 & 3.0 & 3.9 & 2.6 \\ \bottomrule
		\end{tabular}%
	}
	\caption{Error for test function set 1.}
	\label{tab:func1}
\end{table}

\begin{table}[htpb]
	\centering
	\resizebox{\textwidth}{!}{%
		\begin{tabular}{@{}llllll@{}}
			\toprule
			Method\textbackslash{}Hidden Layers & 1 & 2 & 3 & 4 & 5 \\ \midrule
			PCL & \textbf{$4.1\times 10^{-5}$} & \textbf{$2.4\times 10^{-4}$} & \textbf{$6.8\times 10^{-4}$} & \textbf{$1.1\times 10^{-3}$} & \textbf{$3.8\times 10^{-3}$} \\
			PM-0.0 & $8.0\times 10^{-1}$ & 1.5 & $9.3\times 10^{-1}$ & $1.9\times 10^{-3}$ & 4.9 \\
			PM-0.1 & 3.8 & 1.9 & $1.1\times 10^1$ & 6.7 & 1.6 \\
			PM-1 & 2.3 & $2.8\times 10^{-3}$ & $2.4\times 10^1$ & 2.7 & 2.7 \\
			PM-10 & 3.2 & 2.7 & $1.6\times 10^{-2}$ & 2.6 & 2.9 \\
			PM-100 & 3.2 & 2.1 & 2.6 & 2.5 & 2.8 \\ \bottomrule
		\end{tabular}%
	}
	\caption{Error for test function set 2.}
	\label{tab:func2}
\end{table}

\begin{table}[htpb]
	\centering
	\resizebox{\textwidth}{!}{%
		\begin{tabular}{@{}llllll@{}}
			\toprule
			Method\textbackslash{}Hidden Layers & 1 & 2 & 3 & 4 & 5 \\ \midrule
			PCL & \textbf{$8.7\times 10^{-3}$} & $3.0\times 10^{-2}$ & $2.9\times 10^{-2}$ & $3.4\times 10^{-2}$ & $5.4\times 10^{-2}$ \\
			PM-0.0 & 8.0 & 5.6 & 4.5 & $1.1\times 10^{-2}$ & 1.0 \\
			PM-0.1 & 5.3 & $3.8\times 10^1$ & 8.0 & 6.4 & 7.2 \\
			PM-1 & $1.9\times 10^1$ & \textbf{$3.0\times 10^{-2}$} & 4.3 & $2.0\times 10^1$ & 2.3 \\
			PM-10 & 3.6 & 4.4 & $2.7\times 10^{-2}$ & $1.1\times 10^1$ & 4.6 \\
			PM-100 & 1.6 & 5.3 & 4.2 & $8.9\times 10^{-2}$ & 4.4 \\ \bottomrule
		\end{tabular}%
	}
	\caption{Error for test function set 3.}
	\label{tab:func3}
\end{table}

\begin{table}[htpb]
	\centering
	\resizebox{\textwidth}{!}{%
		\begin{tabular}{@{}llllll@{}}
			\toprule
			Method\textbackslash{}Hidden Layers & 1 & 2 & 3 & 4 & 5 \\ \midrule
			PCL & $3.1\times 10^{-1}$ & $8.4\times 10^{-1}$ & $2.3\times 10^{-1}$ & $1.2\times 10^{-1}$ & $1.4\times 10^{-1}$ \\
			PM-0.0 & $2.1\times 10^{-1}$ & 4.8 & $7.3\times 10^{-1}$ & 2.5 & 4.1 \\
			PM-0.1 & 3.7 & $2.0\times 10^1$ & 7.2 & 4.8 & 5.4 \\
			PM-1 & 4.3 & $1.7\times 10^1$ & 3.2 & 5.9 & 4.3 \\
			PM-10 & 4.4 & 3.3 & 7.1 & 2.3 & 9.7 \\
			PM-100 & 3.1 & $8.2\times 10^{-1}$ & 3.1 & 3.8 & 3.1 \\ \bottomrule
		\end{tabular}%
	}
	\caption{Error for test function set 4.}
	\label{tab:func4}
\end{table}

\begin{figure}[htpb]
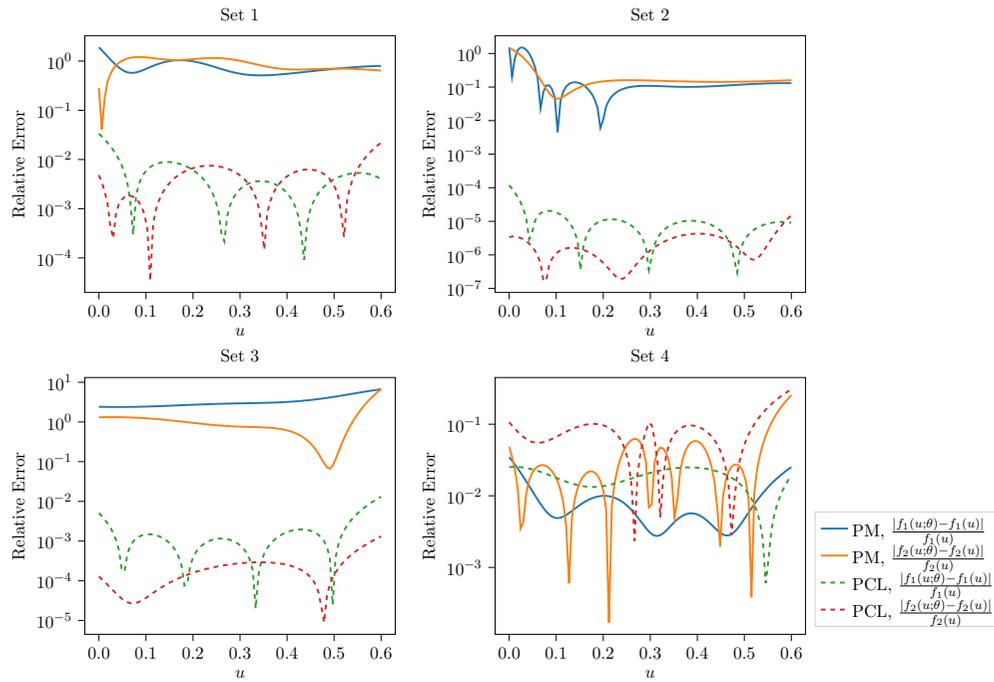

\centering
    \scalebox{0.6}{\input{figures/fuerr1}}~
    \scalebox{0.6}{\input{figures/fuerr2}}$\qquad\qquad\qquad\qquad$
    \scalebox{0.6}{\input{figures/fuerr3}}~
    \scalebox{0.6}{\input{figures/fuerr4}}
    \caption{The relative error for PCL and PM-0.01 for different test function sets.}
    \label{fig:fuerr}
\end{figure}

\subsection{Physics Based Machine Learning: Time-Dependent PDEs}

Finally, we consider the inverse problem of time-dependent PDEs. We consider the two-phase flow problem \cite{kleinstreuer2017two}, in which we derive the governing equations from the conservation of mass and momentum (Darcy's law) for each phase
\begin{equation}\label{eqn:two_phase_mass}
      \frac{\partial }{{\partial t}}(\phi {S_i}{\rho _i}) + \nabla  \cdot ({\rho _i}{\mathbf{v}_i}) = {\rho _i}{q_i},\quad 
      i = 1,2
\end{equation}
Here \(\phi\) denotes porosity, \(S_i\) denotes the saturation of the \(i\)-th phase, \(\rho_i\) denotes fluid density, \(\mathbf{v}_i\) denotes the volumetric velocity, and \(q_i\) stands for injection rates of the source. The saturation of two phases should add up to 1, that is,
\begin{equation} \label{eqn:two_phase_sat}
      S_{1} + S_{2} = 1
\end{equation}
Darcy's law has the following form
\begin{equation}\label{eqn:two_phase_darcy}
      {\mathbf{v}_i} = - \frac{{K{k_{ri}}}}{{{\tilde{\mu}_i}}}(\nabla {P_i} - g{\rho _i}\nabla Z), \quad
      i=1, 2
\end{equation}
where \(K\) is the permeability tensor and for simplicity we assume that \(K\) is a spatially varying scalar value (\Cref{fig:K}). \(\tilde{\mu}_i\) is the viscosity, \(P_i\) is the fluid pressure, \(g\) is the gravitational acceleration constant, and \(Z\) is the vector in the downward vertical direction. The relative permeability is written as \(k_{ri}\), a dimensionless measure of the effective permeability of that phase, and a function of $S_1$ (or $S_2$ equivalently since $S_1+S_2=1$). The relative permeability depends on the saturation $S_i$ and many empirical relations are used for approximations. For example, the Corey correlation \cite{brooks1966properties} assumes that the relative permeability is a power law in saturation. Another commonly used model is the LET-type correlation \cite{lomeland2005new}, which has the following form (recall that $S_2=1-S_1$ so that $k_{ri}$ can be viewed as a function of $S_1$ alone)
\begin{equation}\label{equ:kr12}
\begin{aligned}
	k_{r1}(S_1) =& \frac{k_{r1}^o S_1^{L_1}}{S_1^{L_1} + E_1 S_2^{T_1}}\\
	k_{r2}(S_1) =& \frac{ S_2^{L_2}}{S_2^{L_2} + E_2 S_1^{T_2}}
\end{aligned}
\end{equation}
Here $k_{r1}^o$, $L_i$, $E_i$, $T_i$ are degrees of freedoms that can be used to control the magnitude and shape of the measured relative permeability curves. In this problem, we assume the true model for relative permeability is \Cref{equ:kr12} with parameter $k_{r1}^o=0.6$, $L_1=L_2=1.8$, $E_1=E_2=2.1$, $T_1=T_2=2.3$. Finally, for numerical simulation of the system, we adopt the zero initial condition and no-flow boundary condition
\begin{equation}
    {\mathbf{v}_i } \cdot \mathbf{n}  = 0, i = 1, 2
\end{equation}

\begin{figure}[hbt]
\centering
  \includegraphics[width=0.6\textwidth]{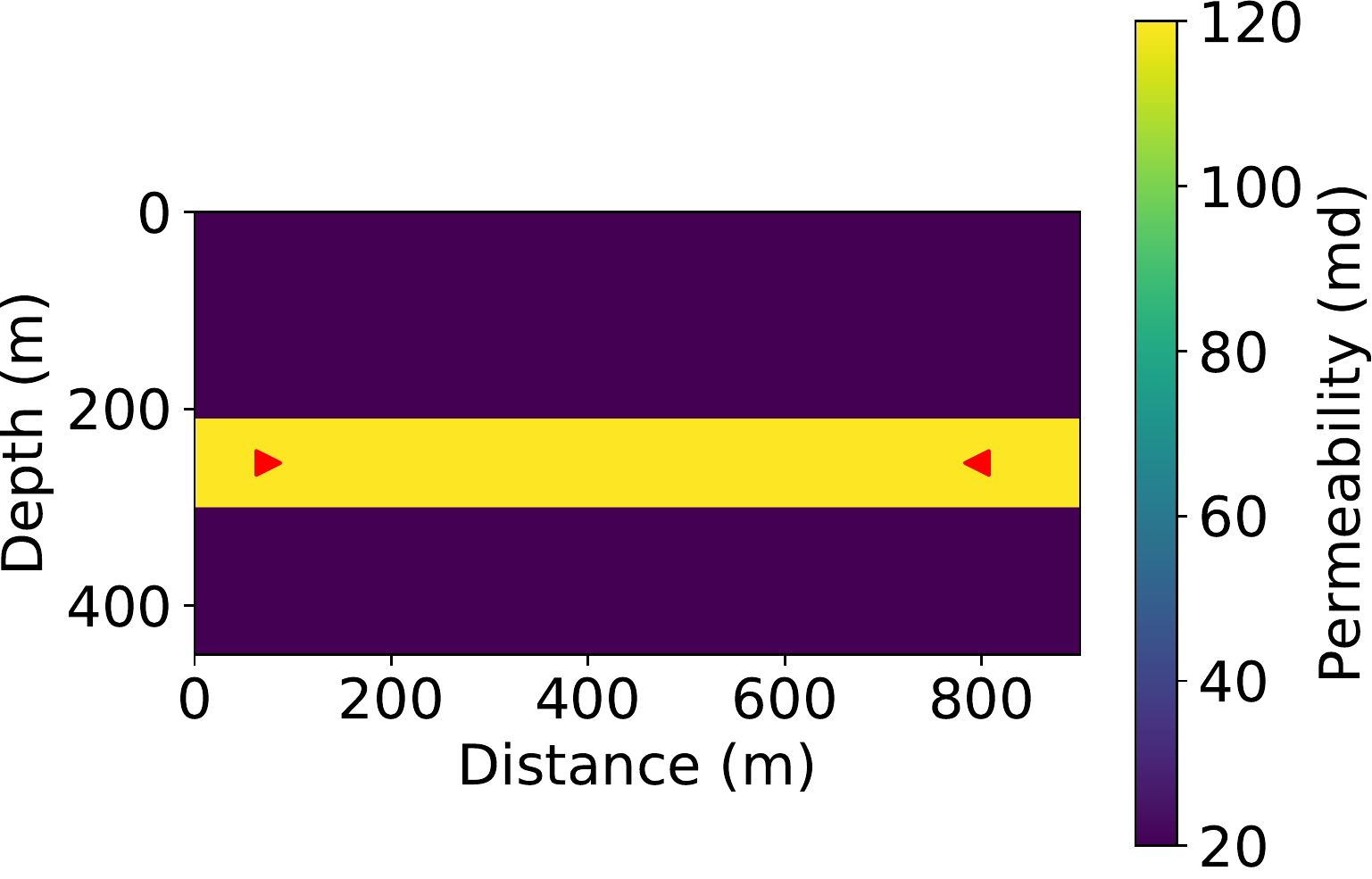}
  \caption{The scalar value of the permeability $K$ at each location. The triangular markers are point sources.}
  \label{fig:K}
\end{figure}

We adopt a nonlinear implicit scheme for solving the PDE system since it is unconditionally stable. We refer readers to \cite{li2019time} for the numerical scheme and implementation details. The nonlinear implicit step involves solving 
\begin{equation}\label{equ:major}
\phi (S_2^{n + 1} - S_2^n) - \nabla \cdot \left( {{m_{2}}(S_2^{n + 1})K\nabla \Psi _2^n} \right) \Delta t = 
\left(q_2^n + q_1^n \frac{m_2(S^{n+1}_2)}{m_1(S^{n+1}_2)}\right) 
\Delta t
\end{equation}
where $n$ stands for the time step $n$ and $\Delta t$ is the time step. $\Psi _2^n$ is the fluid potential and can be computed using $S_2^n$ (see \cite{li2019time} for details) and 
$$m_i(s) = \frac{k_{ri}(s)}{\tilde \mu_i}$$
We apply the Newton-Raphson algorithm to solve the nonlinear equation \Cref{equ:major}. This means we need to compute $m_i'(s)$. We implement an efficient custom operator for solving \Cref{equ:major} where the linear system is solved with algebraic multi-grid methods \cite{demidov2019amgcl,hu2011development}. 

\begin{figure}[hbt]
\centering
  \includegraphics[width=0.8\textwidth]{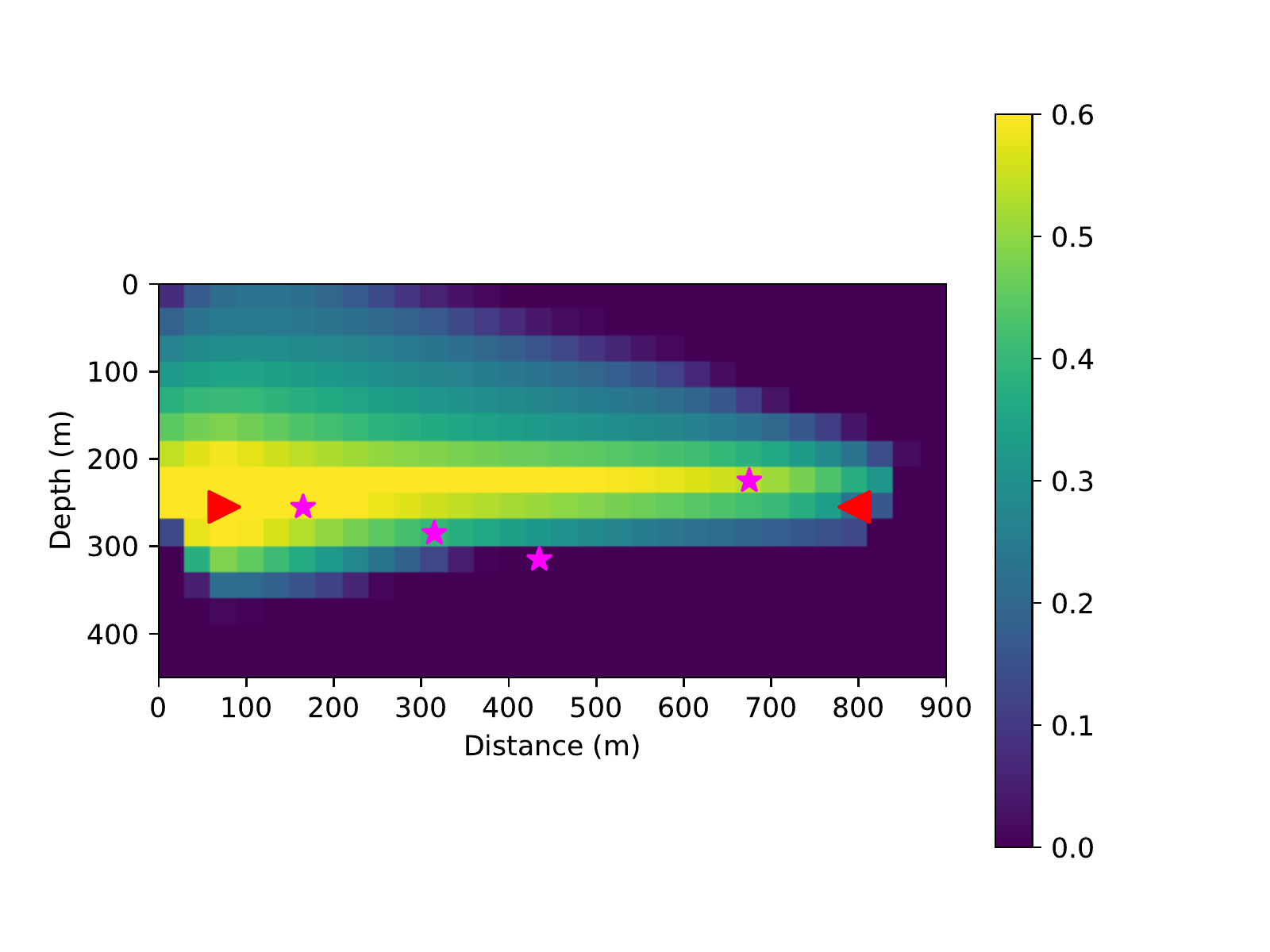}
  \caption{Source functions and locations where observations are collected. The triangular markers are point sources; to be more specific, the left source is an injection while the right source is an absorption (e.g., production wells). The four magenta markers are locations where observations are collected. The background scalar values show the saturation at the terminal time of the simulation.}
  \label{fig:sat}
\end{figure}

We assume that we can measure the saturation $S_1$, denoted as $\hat S_1^n(\bx_i)$ ($n$ denotes the time step), at multiple locations $\{\bx_i\}_{i\in \mathcal{I}_{\mathrm{obs}}}$ (the magenta markers in \Cref{fig:sat}). The saturation model \Cref{equ:kr12} is unknown to us. In this setting, we cannot measure or compute $K$ and $S_i$ directly at all locations, and, therefore, it is impossible to build the saturation model $K_{ri}$ by direct curve fitting. Instead, we use two neural networks to approximate the saturation models
\[
    f_1(S_1; \theta_1) \approx k_{r1}(S_1)\qquad f_2(S_1; \theta_2) \approx k_{r2}(S_1)
\]
where $\theta_1$ and $\theta_2$ are the weights and biases for two neural networks. The neural network is constrained to output values between $(0,1)$ by applying 
$$ x\mapsto \frac{\tanh(x)+1}{2} $$
to the outputs of the neural networks in the last layer. 
 
Corresponding to \Cref{equ:inverse}, we have ($S_i^n$ and $P_i^n$ are solution vectors)
\begin{align*}
	u &= [S_1^n\ S_2^n\ P_1^n\ P_2^n]_{n=1,2,\ldots}\\
	\theta &= [\theta_1\ \theta_2]\\
	L(u) &= \sum_{i\in \mathcal{I}_{\mathrm{obs}}} \sum_{n}(S_1^n(\bx_i) - \hat S_1^n(\bx_i))^2
\end{align*}
and $F(u, \theta)=0$ is given by the systems of \Cref{eqn:two_phase_mass,eqn:two_phase_sat,eqn:two_phase_darcy}. 
  
\begin{table}[htpb]
\centering
\begin{tabular}{@{}cc@{}}
\toprule
Parameter & Value \\ \midrule
$g$ & 9.8 \\
$h$ & 30 \\
Time Steps & 50 \\
$\Delta t$ & 20 \\
Domain & $[0,900]\times [0,450]$ \\
$\rho_1$ & 501.9 \\
$\rho_2$ & 1053 \\
$\tilde \mu_1$ & 0.1 \\
$\tilde \mu_2$ & 1 \\
Neural Network & Fully-connected 1-20-20-20-1 \\ \bottomrule
\end{tabular}
\caption{Parameters used in the simulation. }
\label{tab:parameters}
\end{table}

We list the parameters for carrying out the numerical simulation in \Cref{tab:parameters}. \Cref{tab:comparison_param} compares the number of independent variables for PCL and the penalty method. We can see that the penalty method leads to a huge number of unknown variables, and the associated optimization problem is difficult to solve. However, the independent variables in PCL is limited to weights and biases of the neural network. 

\begin{table}[hbtp]
\centering
  \begin{tabular}{c|cc}
  \toprule
   Method &PCL &  Penalty Method\\
    \midrule
   \# Parameters & 1802 & 24302 \\
   \bottomrule
  \end{tabular}
  \caption{Comparison of independent variables for penalty method and PCL.}
  \label{tab:comparison_param}
\end{table}

Therefore, we select PCL to solve the two-phase flow inverse modeling problem. As shown in \Cref{fig:rmsq}, PCL successfully learns the saturation model in the region of physical interest. The estimation is not accurate near the boundary since there are few data points in this region. Finally, we show the true and reconstructed saturation $S_1$ in \Cref{fig:satnn} using the estimated neural network saturation model. We see the reconstructed saturation is nearly the same as true saturation. 

\begin{figure}[htpb]
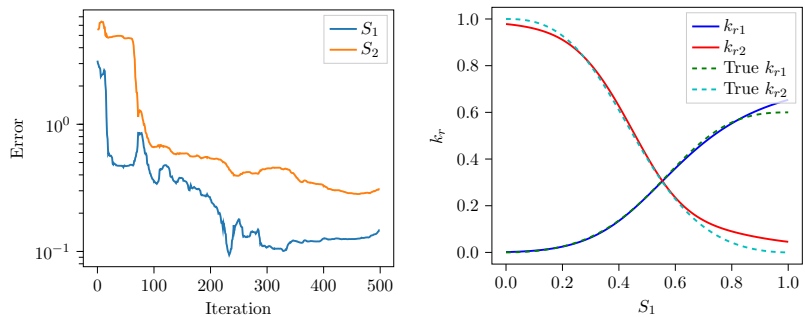

\centering
    \scalebox{0.6}{\input{figures/errwo}}~
    \scalebox{0.6}{\input{figures/krwo}}
    \caption{Left: root mean squared error of estimated and true $k_{ri}$. Right: visualization of the neural network approximation to $k_{ri}$; this plot is generated at the 491th iteration.}
    \label{fig:rmsq}
\end{figure}

\begin{figure}[hbtp]
\centering
    \includegraphics[width=1.0\textwidth]{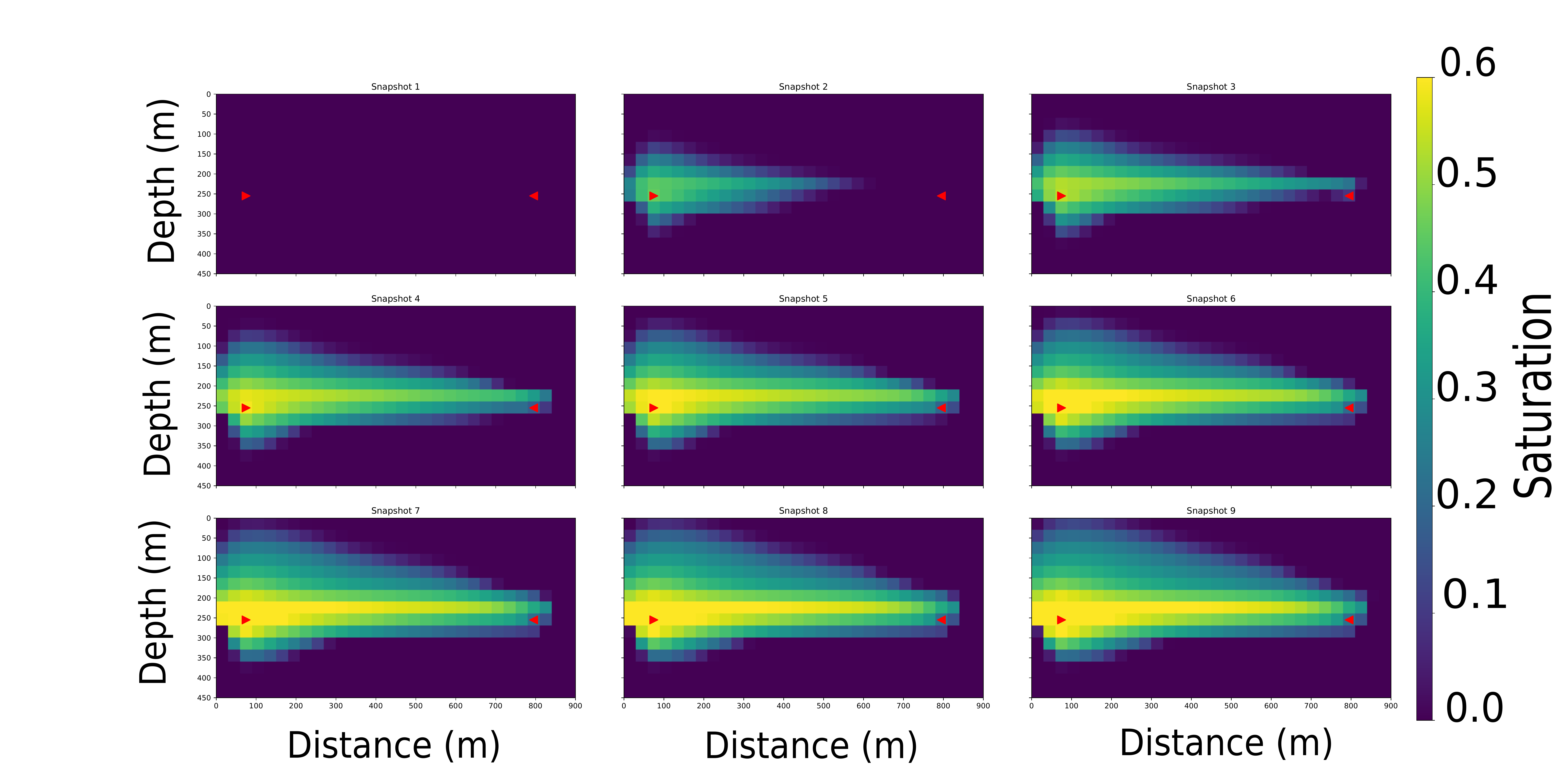}
    \includegraphics[width=1.0\textwidth]{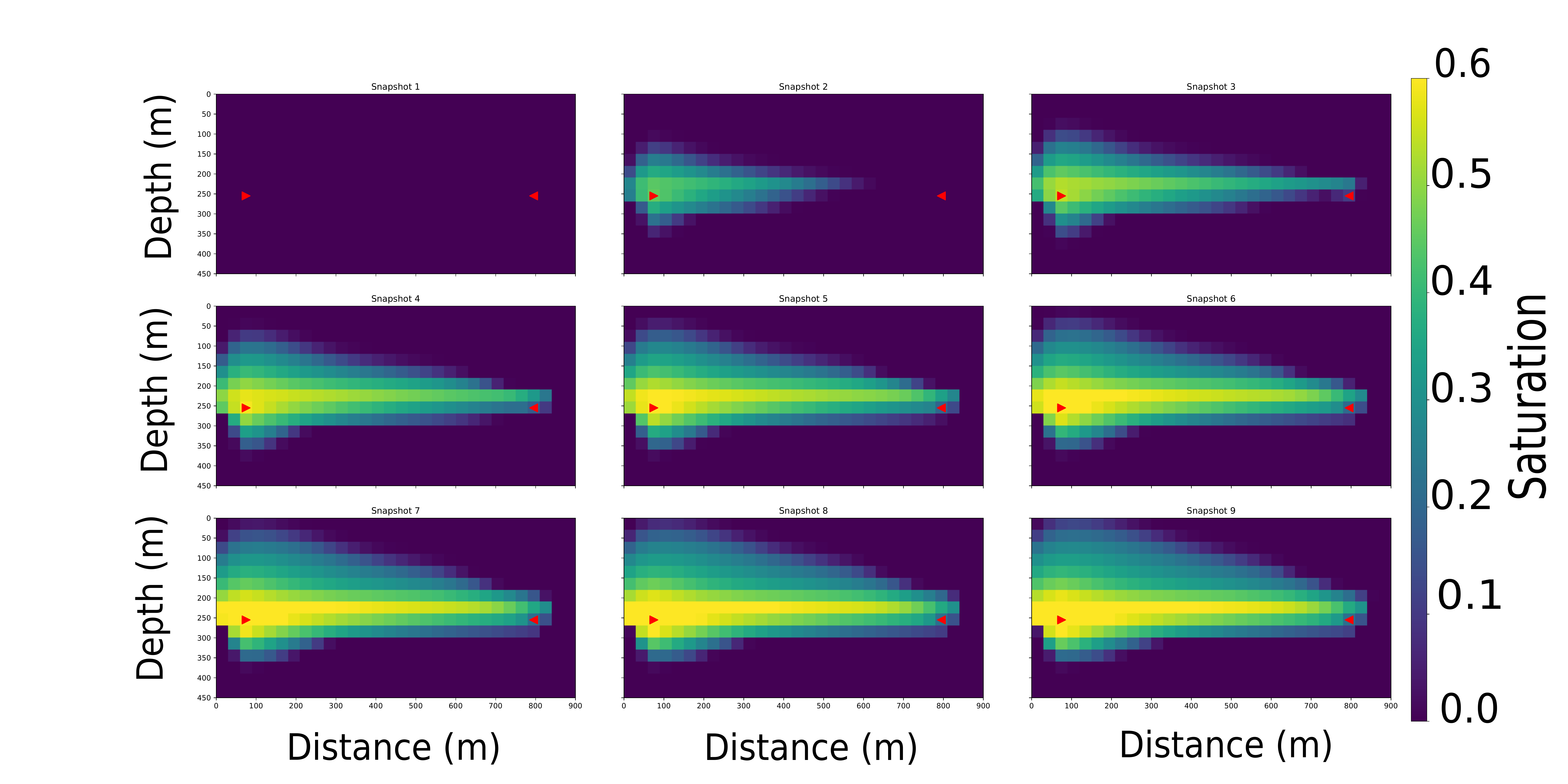}
    \caption{Saturation $S_1$ at Day 0, 5, 10, 15, 20, 25, 30, 35, 40 (from left to right, top to bottom). The top figure shows the true saturation computed with \Cref{equ:kr12} and the bottom figure shows the saturation computed with the neural network saturation model. The distance ranges from 0~m to 900~m and the depth ranges from 0~m to 450~m.}
    \label{fig:satnn}
\end{figure}

\section{Conclusion}\label{sect:conc}

We have compared two methods for data-driven modeling with sparse observations: the penalty method (PM) and the physics constrained learning method (PCL). The difference is that PM enforces the physical constraints by including a penalty term in the loss function, considering the solution vectors as independent variables, while PCL solves the system of PDEs directly. We proved that PM is more ill-conditioned than PCL in a model problem: the condition number of PM is the square of that of PCL asymptotically, which explains the slow convergence or divergence of PM during the optimization. The increase in the number of independent variables in PM can be very large for dynamical systems. 
Additionally, we proposed an approach for implementing PCL through automatic differentiation, and thus not only getting rid of the time-consuming and difficult process of deriving and implementing the gradients, but also leverages computational graph-based optimization for performance. 

The proposed PCL method shows superior performance over the PM method in terms of iterations to converge. We see almost a $10^4$ speed-up in the Helmholtz equation example. Additionally, when we approximate the unknown function with a neural network, PM is sensitive to the choice of penalty parameters and neural network architectures, and may converge to a bad local minimum. In contrast, PCL consistently converges to the true solution for different neural network architectures and test function sets. 

However, PCL suffers from some limitations compared to PM. Firstly, the memory and computation costs per iteration for PCL are greater than the penalty method; for nonlinear problems, we may need an expensive iterative method such as the Newton-Raphson algorithm for solving the nonlinear system.
Secondly, for time-dependent problems, PCL must solve the equation sequentially in time. This calculation can be computationally challenging for long time horizons. However, for PM long time horizon problems are also very challenging: it has a large number of independent variables. 
Thirdly, from the perspective of implementation, PM only requires first-order derivatives, while PCL requires extra Jacobian matrices. As a result, PCL requires more implementation work and is generally more expensive in computation and storage per iteration. 

The idea of enforcing physical constraints by solving the PDEs numerically is very crucial for learning parameters or training deep neural network surrogate models in stiff problems. To this end, physics constraint learning puts forth an effective and efficient approach that can be integrated into an automatic differentiation framework and interacts between deep learning and computational engineering. PCL allows for the potential to supplement the best of available scientific computing tools with deep learning techniques for inverse modeling. 
 
\newpage

\begin{appendix}
    
\section{NURBS Domain}\label{sect:nurbs}

The same geometry can be described in many ways using NURBS. We have implemented a module in \texttt{IGACS.jl} with many built-in meshes and utilities to generate NURBS mesh (e.g., from boundaries described by NURBS curves). See documentation or source codes for details. We present the NURBS data structure for the meshes we have used in this paper. The meshes are subjected to refinement or affine transformation for the numerical examples in \Cref{sect:num}. 

\

\begin{minipage}{\textwidth}
  \begin{minipage}[b]{0.49\textwidth}
  \centering
  \captionof{table}{Weights ($w$) and control points $(x,y)$ for the square mesh. The knot vectors are $u=(0,0,0,1,1,1)$, $v=(0,0,0,1,1,1)$ and the degrees are $p=q=2$.}
      \begin{tabular}{@{}lll@{}}
\toprule
$w$ & $x$  & $y$  \\ \midrule
1 & $-1$ & 1  \\
1 & $-1$ & 0  \\
1 & $-1$ & $-1$ \\
1 & 0  & 1  \\
1 & 0  & 0  \\
1 & 0  & $-1$ \\
1 & 1  & 1  \\
1 & 1  & 0  \\
1 & 1  & $-1$ \\ \bottomrule
\end{tabular}
  \end{minipage}~
  \begin{minipage}[b]{0.49\textwidth}
  \centering
\captionof{table}{Weights ($w$) and control points $(x,y)$ for the curved pipe mesh. The knot vectors are $u=(0,0,1,1)$, $v=(0,0,0,1,1,1)$ and the degrees are $p=1$,$q=2$.}
\vspace{0.6cm}
\begin{tabular}{@{}lll@{}}
\toprule
$w$                    & $x$ & $y$ \\ \midrule
1                    & 1 & 0 \\
1                    & 2 & 0 \\
$\frac{\sqrt{2}}{2}$ & 1 & 1 \\
$\frac{\sqrt{2}}{2}$ & 2 & 2 \\
1                    & 0 & 1 \\
1                    & 0 & 2 \\ \bottomrule
\end{tabular}
\end{minipage}
\end{minipage}

\begin{figure}
\centering
    \scalebox{0.6}{
\begin{tikzpicture}

\begin{axis}[
tick align=outside,
tick pos=left,
x grid style={white!69.01960784313725!black},
xmin=-1.10332661290323, xmax=1.10332661290323,
xtick style={color=black},
y grid style={white!69.01960784313725!black},
ymin=-1.10446428571429, ymax=1.10446428571429,
ytick style={color=black}
]
\addplot [only marks, draw=red, fill=red, colormap/viridis]
table{%
x                      y
-1 1
-1 0
-1 -1
0 1
0 0
0 -1
1 1
1 0
1 -1
};
\addplot [very thick, black, dashed]
table {%
-1 1
-0.894736842105263 1
-0.789473684210526 1
-0.68421052631579 1
-0.578947368421053 1
-0.473684210526316 1
-0.368421052631579 1
-0.263157894736842 1
-0.157894736842105 1
-0.0526315789473685 1
0.0526315789473684 1
0.157894736842105 1
0.263157894736842 1
0.368421052631579 1
0.473684210526316 1
0.578947368421053 1
0.684210526315789 1
0.789473684210526 1
0.894736842105263 1
0.99999998 1
};
\addplot [very thick, black, dashed]
table {%
-1 -0.99999998
-0.894736842105263 -0.99999998
-0.789473684210526 -0.99999998
-0.68421052631579 -0.99999998
-0.578947368421053 -0.99999998
-0.473684210526316 -0.99999998
-0.368421052631579 -0.99999998
-0.263157894736842 -0.99999998
-0.157894736842105 -0.99999998
-0.0526315789473685 -0.99999998
0.0526315789473684 -0.99999998
0.157894736842105 -0.99999998
0.263157894736842 -0.99999998
0.368421052631579 -0.99999998
0.473684210526316 -0.99999998
0.578947368421053 -0.99999998
0.684210526315789 -0.99999998
0.789473684210526 -0.99999998
0.894736842105263 -0.99999998
0.99999998 -0.99999998
};
\addplot [very thick, black, dashed]
table {%
-1 1
-1 0.894736842105263
-1 0.789473684210526
-1 0.68421052631579
-1 0.578947368421053
-1 0.473684210526316
-1 0.368421052631579
-1 0.263157894736842
-1 0.157894736842105
-1 0.0526315789473685
-1 -0.0526315789473684
-1 -0.157894736842105
-1 -0.263157894736842
-1 -0.368421052631579
-1 -0.473684210526316
-1 -0.578947368421053
-1 -0.684210526315789
-1 -0.789473684210526
-1 -0.894736842105263
-1 -0.99999998
};
\addplot [very thick, black, dashed]
table {%
0.99999998 1
0.99999998 0.894736842105263
0.99999998 0.789473684210526
0.99999998 0.684210526315789
0.99999998 0.578947368421053
0.99999998 0.473684210526316
0.99999998 0.368421052631579
0.99999998 0.263157894736842
0.99999998 0.157894736842105
0.99999998 0.0526315789473685
0.99999998 -0.0526315789473683
0.99999998 -0.157894736842105
0.99999998 -0.263157894736842
0.99999998 -0.368421052631579
0.99999998 -0.473684210526316
0.99999998 -0.578947368421053
0.99999998 -0.684210526315789
0.99999998 -0.789473684210526
0.99999998 -0.894736842105263
0.99999998 -0.99999998
};
\end{axis}

\end{tikzpicture}}~
    \scalebox{0.6}{
\begin{tikzpicture}

\begin{axis}[
tick align=outside,
tick pos=left,
x grid style={white!69.01960784313725!black},
xmin=-0.482181508967223, xmax=2.48218150896722,
xtick style={color=black},
y grid style={white!69.01960784313725!black},
ymin=-0.104464285714286, ymax=2.10446428571429,
ytick style={color=black}
]
\addplot [only marks, draw=red, fill=red, colormap/viridis]
table{%
x                      y
1 1.01465363575695e-17
2 2.02930727151391e-17
1 1
2 2
1.99673461754274e-16 1
3.99346923508548e-16 2
};
\addplot [very thick, black, dashed]
table {%
1 1.01465363575695e-17
0.997146573479169 0.0754898072507301
0.988272658157257 0.152699551861786
0.972963412819013 0.230959297963037
0.950898076293276 0.309504197874192
0.921873766915625 0.3874903326185
0.885826160804624 0.464017254888373
0.842844924697569 0.538156513396938
0.793181981282237 0.608984683361729
0.737251184692945 0.6756187465345
0.6756187465345 0.737251184692945
0.608984683361729 0.793181981282237
0.538156513396938 0.842844924697569
0.464017254888373 0.885826160804624
0.3874903326185 0.921873766915625
0.309504197874192 0.950898076293276
0.230959297963037 0.972963412819013
0.152699551861786 0.988272658157257
0.0754898072507303 0.997146573479169
1.41421359358866e-08 1
};
\addplot [very thick, black, dashed]
table {%
1.99999999 2.02930726136737e-17
1.99429313698687 0.150979613746562
1.97654530643179 0.305399102196577
1.94592681590839 0.461918593616481
1.90179614307757 0.619008392653343
1.84374752461251 0.774980661362097
1.77165231275099 0.928034505136572
1.68568984096669 1.07631302141231
1.58636395463265 1.21796936063361
1.47450236201338 1.35123748631281
1.35123748631281 1.47450236201338
1.21796936063361 1.58636395463265
1.07631302141231 1.68568984096669
0.928034505136572 1.77165231275099
0.774980661362097 1.84374752461251
0.619008392653343 1.90179614307757
0.461918593616481 1.94592681590839
0.305399102196577 1.97654530643179
0.150979613746563 1.99429313698687
2.82842717303518e-08 1.99999999
};
\addplot [very thick, black, dashed]
table {%
1 1.01465363575695e-17
1.05263157894737 1.06805645869153e-17
1.10526315789474 1.12145928162611e-17
1.15789473684211 1.17486210456068e-17
1.21052631578947 1.22826492749526e-17
1.26315789473684 1.28166775042983e-17
1.31578947368421 1.33507057336441e-17
1.36842105263158 1.38847339629899e-17
1.42105263157895 1.44187621923356e-17
1.47368421052632 1.49527904216814e-17
1.52631578947368 1.54868186510272e-17
1.57894736842105 1.60208468803729e-17
1.63157894736842 1.65548751097187e-17
1.68421052631579 1.70889033390645e-17
1.73684210526316 1.76229315684102e-17
1.78947368421053 1.8156959797756e-17
1.84210526315789 1.86909880271018e-17
1.89473684210526 1.92250162564475e-17
1.94736842105263 1.97590444857933e-17
1.99999999 2.02930726136737e-17
};
\addplot [very thick, black, dashed]
table {%
1.41421359358866e-08 1
1.48864588798806e-08 1.05263157894737
1.56307818238747e-08 1.10526315789474
1.63751047678687e-08 1.15789473684211
1.71194277118627e-08 1.21052631578947
1.78637506558567e-08 1.26315789473684
1.86080735998508e-08 1.31578947368421
1.93523965438448e-08 1.36842105263158
2.00967194878388e-08 1.42105263157895
2.08410424318329e-08 1.47368421052632
2.15853653758269e-08 1.52631578947368
2.23296883198209e-08 1.57894736842105
2.3074011263815e-08 1.63157894736842
2.3818334207809e-08 1.68421052631579
2.4562657151803e-08 1.73684210526316
2.53069800957971e-08 1.78947368421053
2.60513030397911e-08 1.84210526315789
2.67956259837851e-08 1.89473684210526
2.75399489277791e-08 1.94736842105263
2.82842717303518e-08 1.99999999
};
\end{axis}

\end{tikzpicture}}
\caption{Control points (red) and boundaries of the square and curved pipe mesh.}
\end{figure}
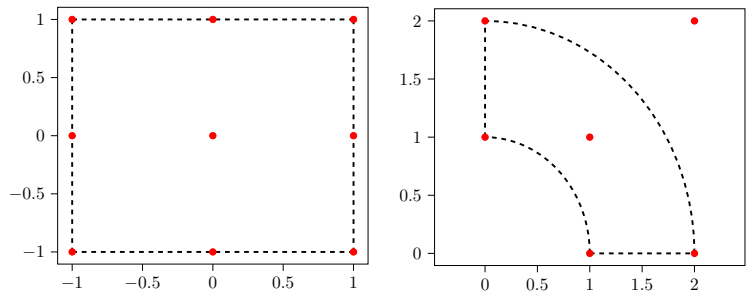

\section{Isogeometric Analysis}

\subsection{Overview}

Non-Uniform Rational B-Splines (NURBS) have been a standard tool for describing and modeling curves and surfaces in CAD programs. Isogeometric analysis uses NURBS as an analysis tool, proposed by Hughes \textit{et al} \cite{hughes2005isogeometric}. In this section, we present a short description of the isogeometric collocation method (IGA-C). For details, we refer the readers to \cite{auricchio2010isogeometric,cottrell2009isogeometric,hughes2010efficient,bazilevs2006isogeometric,nguyen2015isogeometric}. 

We emphasize that the isogeometric collocation method is not the only way to implement the physics constrained learning. We use IGA-C here since it has smooth basis functions and thus allows us to work with strong forms of the PDE directly. It has also been shown effective for applications in solid mechanics \cite{hughes2005isogeometric}, fluid dynamics \cite{nielsen2011discretizations}, etc. The physics constrained learning can also be applied to finite element analysis and other numerical discretization methods.

\subsection{B-splines and NURBS}

B-splines are piecewise polynomials curves, which are composed of linear combinations of B-spline basis functions. There are three components of $B$-splines:
\begin{enumerate}
    \item \textit{Control points}. Control points (denoted by $\bB_i$) are points in the plane. Control points affect the shape of B-spline curves but are not necessarily on the curve. 
    \item \textit{Knot vector}. A knot vector is a set of non-decreasing real numbers representing coordinates in the parametric space
    \begin{equation*}
        \{\xi_1=0, \ldots, \xi_{n+p+1}=1\}
    \end{equation*}
    where $p$ is the degree of the B-spline and $n$ is the number of basis functions, which is equal to the number of control points. A knot vector is uniform if the knots are uniformly-spaced and non-uniform otherwise. 
    \item \textit{B-spline basis functions}. B-spline basis functions are defined recursively by the knot vector. For $p=0$
    \begin{equation*}
        N_{i,0}(\xi) = \begin{cases}
            1 & \mbox{ if } \xi_i\leq \xi < \xi_{i+1}\\
            0 & \mbox{ otherwise }
        \end{cases}
    \end{equation*}
    For $p>1$, we have
    \begin{equation*}
        N_{i,p} = \begin{cases}
            \frac{\xi-\xi_i}{\xi_{i+p}-\xi_i}N_{i, p-1}(\xi) +\frac{\xi_{i+p+1}-\xi}{\xi_{i+p+1}-\xi_{i+1}}N_{i+1, p-1}(\xi)&\mbox{ if } \xi\in [\xi_i, \xi_{i+p+1})\\
            0 & \mbox{ otherwise }
        \end{cases}
    \end{equation*}
\end{enumerate}

The basis functions of a 2D B-spline region $\Omega$ can be constructed by tensor product of 1D B-spline basis functions. Let $\bB_{i,j}$, $i=1,2,\ldots, n+p+1$, $j=1,2,\ldots,m+q+1$ be the control points, where $n$, $m$ are the number of basis functions per dimension, and $p$, $q$ are degrees of the B-splines perspectively, the region can be represented as a map from the parametric space $[0,1]^2$ to the physical space $\Omega$
\begin{equation*}
    S(\xi, \eta) = \sum_{i=1}^n \sum_{j=1}^m N_{i,p}(\xi, \eta)M_{j,p}(\xi, \eta) \bB_{i,j}
\end{equation*}
here we use $N$ and $M$ to denote the basis functions for each dimension.

Analogously to B-splines, the basis functions for NURBS in a 2D domain are defined as
\begin{equation*}
    R_{i,j}^{p,q}(\xi, \eta) = \frac{N_{i,p}(\xi)M_{j,q}(\eta)w_{i,j}}{\sum_{i=1}^n\sum_{j=1}^m N_{i,p}(\xi)M_{j,q}(\eta)w_{i,j}}
\end{equation*}
where $w_{i,j}$ are called \textit{weights} of the NURBS surface. Hence the NURBS mapping is
\begin{equation*}
    R(\xi, \eta) = \sum_{i=1}^n \sum_{j=1}^m R_{i,j}^{p,q}(\xi, \eta) \bB_{i,j}
\end{equation*}

\subsection{Isogeometric Collocation Method}

In the isogeometric collocation method, the solution to the PDE is represented as a linear combination of NURBS basis functions
\begin{equation*}
    u_h(\bx) =  \sum_{i=1}^n \sum_{j=1}^m c_{i,j} R_{i,j}^{p,q}(\xi, \eta) \quad \bx = \sum_{i=1}^n \sum_{j=1}^m R_{i,j}^{p,q}(\xi, \eta) \bB_{i,j}
\end{equation*}
here $c_{i,j}\in \RR$ are the coefficients of the basis functions. The smoothness of $u_h(\bx)$ depends on the continuity of the basis function $R_{i,j}$, which, in turn, depends on the continuity of $\{N_{i,p}\}$, $\{M_{j,p}\}$. $N_{i,p}$ is smooth between knots in the knot vector and $C^{p-k}$ where $k$ is the multiplicity of $u_l$ at the knot $u_l$; the same is true for $M_{j,p}$. Therefore, we can make $u_h(x)$ arbitrarily smooth by selecting appropriate knot vectors and degrees. 

Since $u_h(x)$ can be made continuously differentiable up to any order, we can work directly with the strong form of PDEs. Consider a linear boundary-value problem ($\mathcal{P}$ and $\mathcal{B}$ are linear differential operators)
\begin{equation}\label{equ:system}
    \begin{aligned}
        \mathcal{P}u &=f &\mbox{ in } \Omega\\
        \mathcal{B}u &=g &\mbox{ on } \Omega
    \end{aligned}
\end{equation}
The isogeometric collocation method solves the problem by finding the coefficients $c_{i,j}$ that satisfies
\begin{equation}\label{equ:col}
    \begin{aligned}
        \sum_{i=1}^n \sum_{j=1}^m c_{i,j}\mathcal{P}\Big(R_{i,j}^{p,q}\Big)(\xi_l,\eta_l) &=f(R(\xi_l, \eta_l)), & l \in \mathcal{I}_P \\
        \sum_{i=1}^n \sum_{j=1}^m c_{i,j}\mathcal{B}\Big(R_{i,j}^{p,q}\Big)(\xi_l,\eta_l) &=g(R(\xi_l, \eta_l)), & l\in \mathcal{I}_B
    \end{aligned}
\end{equation}
here $\{(\xi_l,\eta_l)\}$ are tensor products of the Greville abscissae, and $\mathcal{I}_P$, $\mathcal{I}_B$ refer to the indices corresponding to inner and boundary points. $\mathcal{P}\Big(R_{i,j}^{p,q}\Big)$ and $\mathcal{B}\Big(R_{i,j}^{p,q}\Big)$ can be precomputed with efficient De Boor's algorithm. \Cref{equ:col} leads to a sparse linear system 
\begin{equation*}
    \mathbf{A}\mathbf{c} = \mathbf{f} \quad \mathbf{c}=\begin{bmatrix}
        c_{11}\\
        c_{12}\\
        \vdots\\
        c_{1n}\\
        c_{21}\\
        \vdots\\
        c_{mn}
    \end{bmatrix}\quad \mathbf{f}=\begin{bmatrix}
        f(R(\xi_1, \eta_1))\\
        f(R(\xi_1, \eta_2))\\
        \vdots\\
        f(R(\xi_1, \eta_n))\\
        f(R(\xi_2, \eta_1))\\
        \vdots\\
        f(R(\xi_m, \eta_n))
    \end{bmatrix}
\end{equation*}
where $\mathbf{A}$ is a sparse matrix. When \Cref{equ:system} is nonlinear, the Newton--Raphson algorithm is used to find the coefficients.

\section{Automatic Differentiation}

We give a short introduction to automatic differentiation. We do not attempt to cover the details in automatic differentiation since there is extensive literature (see \cite{baydin2018automatic} for a comprehensive review) on this topic; instead, we only discuss the relevant techniques associated with physics constrained learning. 

We describe the reverse mode automatic differentiation. Assume we are given inputs
\[ \{x_1, x_2, \ldots, x_n\} \]
and the algorithm produces a single output $x_N$, $N>n$. The gradients
\[ \frac{\partial x_N(x_1, x_2, \ldots, x_n)}{\partial x_i} \]
$i=1$, $2$, $\ldots$, $n$ are queried. 

The idea is that this algorithm can be decomposed into a sequence of functions $f_i$ ($i=n+1, n+2, \ldots, N$) that can be easily differentiated analytically, such as addition, multiplication, or basic functions like exponential, logarithm and trigonometric functions. Mathematically, we can formulate it as
\begin{equation*}
  \begin{aligned}
    x_{n+1} &= f_{n+1}(\bx_{\pi({n+1})})\\
    x_{n+2} &= f_{n+2}(\bx_{\pi({n+2})})\\
    \ldots\\
    x_{N} &= f_{N}(\bx_{\pi({N})})\\
\end{aligned}
\end{equation*}
where $\bx = \{x_i\}_{i=1}^N$ and $\pi(i)$ are the parents of $x_i$, s.t., $\pi(i) \in \{1,2,\ldots,i-1\}$.

The idea to compute $\partial x_N / \partial x_i$ is to start from $i = N$, and establish recurrences to calculate derivatives with respect to $x_i$ in terms of derivatives with respect to $x_j$, $j >i$. To define these recurrences rigorously, we need to define different functions that differ by the choice of independent variables.

The starting point is to define $x_i$ considering all previous $x_j$, $j < i$, as independent variables. Then:
\[ x_i(x_1, x_2, \ldots, x_{i-1}) = f_i(\bx_{\pi(i)}) \]
Next, we observe that $x_{i-1}$ is a function of previous $x_j$, $j < i-1$, and so on; so that we can recursively define $x_i$ in terms of fewer independent variables, say in terms of $x_1$, \dots, $x_k$, with $k < i-1$. This is done recursively using the following definition:
\begin{equation*}
    x_i(x_1, x_2, \ldots, x_j) = x_i(x_1, x_2, \ldots, x_j, f_{j+1}(\bx_{\pi(j+1)})), \quad n < j+1 < i
\end{equation*}
Observe that the function of the left-hand side has $j$ arguments, while the function on the right has $j+1$ arguments. This equation is used to ``reduce'' the number of arguments in $x_i$.

With these definitions, we can define recurrences for our partial derivatives which form the basis of the back-propagation algorithm. The partial derivatives for
\[ x_N(x_1, x_2, \ldots, x_{N-1}) \]
are readily available since we can differentiate
\[ f_N(\bx_{\pi(N)}) \]
directly. The problem is therefore to calculate partial derivatives for functions of the type $x_N(x_1, x_2, \ldots, x_i)$ with $i<N-1$. This is done using the following recurrence:
\[
    \frac{\partial x_N(x_1, x_2, \ldots, x_{i})}{\partial x_i} = \sum_{j\,:\,i\in \pi(j)}
    \frac{\partial x_N(x_1, x_2, \ldots, x_j)}{\partial x_j}
    \frac{\partial x_j(x_1, x_2, \ldots, x_{j-1})}{\partial x_i}
\]
with $n < i< N-1$. Since $i \in \pi(j)$, we have $i < j$. So we are defining derivatives with respect to $x_i$ in terms of derivatives with respect to $x_j$ with $j > i$. The last term
\[ \frac{\partial x_j(x_1, x_2, \ldots, x_{j-1})}{\partial x_k} \]
is readily available since:
\[ x_j(x_1, x_2, \ldots, x_{j-1}) = f_j(\bx_{\pi(j)}) \]
The computational cost of this recurrence is proportional to the number of edges in the computational graph (excluding the nodes $1$ through $n$), assuming that the cost of differentiating $f_k$ is $O(1)$. The last step is defining
\[
    \frac{\partial x_N(x_1, x_2, \ldots, x_n)}{\partial x_i} = \sum_{j\,:\,i\in \pi(j)}
    \frac{\partial x_N(x_1, x_2, \ldots, x_j)}{\partial x_j}
    \frac{\partial x_j(x_1, x_2, \ldots, x_{j-1})}{\partial x_i}
\]
with $1 \le i \le n$. Since $n < j$, the first term
\[ \frac{\partial x_N(x_1, x_2, \ldots, x_j)}{\partial x_j} \]
has already been computed in earlier steps of the algorithm. The computational cost is equal to the number of edges connected to one of the nodes in $\{1, \dots, n\}$.

We can see that the complexity of the back-propagation is bounded by that of the forward step, up to a constant factor. Reverse mode differentiation is very useful in the penalty method, where the loss function is a scalar, and no other constraints are present. 

\section{Automatic Jacobian Calculation}\label{sect:ace}

When the constraint $F_h(\theta, u_h) = 0$ is nonlinear, we need extra effort to compute the Jacobian 
\[ \frac{{\partial {F_h}}}{{\partial {u_h}}}(\theta , u_h)\Bigg|_{u_h = {G_h}(\theta )} \]
for both the forward modeling (e.g., the Jacobian is used in Newton-Raphson iteration for solving nonlinear systems) and computing the gradient \Cref{equ:s2}. \revise{This should not be confused with reverse mode automatic differentiation: the technique described in this section is only used for computing sparse Jacobians; when computing the gradient \Cref{equ:s2}, on top of the Jacobians, the reverse mode automatic differentiation is applied to extract gradients (\ref{sect:trick}).} 
For efficiency, it is desired that we can exploit the sparsity of the Jacobian term.  

In this section, we describe a method for automatically computing the sparse Jacobian matrix. This method is not restricted to the numerical methods we use in our numerical examples (finite difference and isogeometric analysis) and can be extended to most commonly used numerical schemes such as finite element analysis and finite volume methods.

The technique used here is similar to \textit{forward} mode automatic differentiation. The difference is that we pass the \textit{sparse} Jacobian matrices in the computational graph instead of gradient vectors. The locality of differential operators is the key to preserving the sparsity of those Jacobian matrices. The major three steps of our method for computing Jacobians automatically are 
\begin{enumerate}
    \item Building the computational graph; 
    \item For each operator in the computational graph, implementing the differentiation rule with respect to its input;
    \item Pushing the sparse Jacobians through the computational graph with chain rules in the same order as the operators of the forward modeling. 
\end{enumerate}

We consider a specific example to illustrate our algorithm. Assume that the numerical solution is represented by a linear combination of basis functions such as isogeometric analysis, $u_{\mathbf{c}}$, where $\mathbf{c}$ is the coefficients. Let the physical constraint and its discretization be (we omit parameters $\theta$ here since it is irrelevant) 
\[F(u) = \nabla \cdot((1+u^2) \nabla u) \qquad F_h(u_\mathbf{c}) = \nabla \cdot( (1+u_\mathbf{c}^2)\nabla u_\mathbf{c}  ) \]
We want to compute the Jacobian matrix 
\begin{equation}
	J(\mathbf{c}) = \frac{\partial F_h(u_\mathbf{c})}{\partial \mathbf{c}}
\end{equation}

Now we decompose the operator into several individual operators in the computational graph (\Cref{fig:graph})
\begin{align*}
	F_0(\mathbf{c}) =&\; u_\mathbf{c} = M\mathbf{c}\\
	F_1(\mathbf{c}) =&\; \nabla F_0(\mathbf{c}) = [M_xF_0(\mathbf{c})\ M_yF_0(\mathbf{c})] \\
	F_2(\mathbf{c}) =&\; F_0(\mathbf{c})^2 \\
	F_3(\mathbf{c}) =&\; 1 + F_2(\mathbf{c}) \\
	F_4(\mathbf{c}) =&\; F_3(\mathbf{c}) F_1(\mathbf{c}) \\
	F_5(\mathbf{c}) =&\; \nabla \cdot F_4(\mathbf{c})
\end{align*}
here $M$ is the coefficient matrix for numerical representation of the solution $u$, $M_x$ and $M_y$ are coefficient matrices for the numerical representation of $\frac{\partial}{\partial x}$ and $\frac{\partial}{\partial y}$ associated with the selected basis functions (they can be obtained analytically by differentiating the basis functions) and all the operations denote component-wise operations, for example,
\begin{align*}
	[F_0(\mathbf{c})^2]_i =&\; F_0(\mathbf{c})_i^2\\
	[\nabla F_0(\mathbf{c})]_{ik} =&\; \nabla [\big(F_0(\mathbf{c})_i\big)]_k\\
	[F_3(\mathbf{c}) F_1(\mathbf{c})]_{ik} =&\; F_3(\mathbf{c})_iF_1(\mathbf{c})_{ik}\quad k=1,2\\
	[\nabla \cdot F_4(\mathbf{c})]_i =&\; \nabla\cdot [F_4(\mathbf{c})_{i1} \ F_4(\mathbf{c})_{i2}]
\end{align*}

\begin{figure}[hbt]
\centering
  \includegraphics[width=0.8\textwidth]{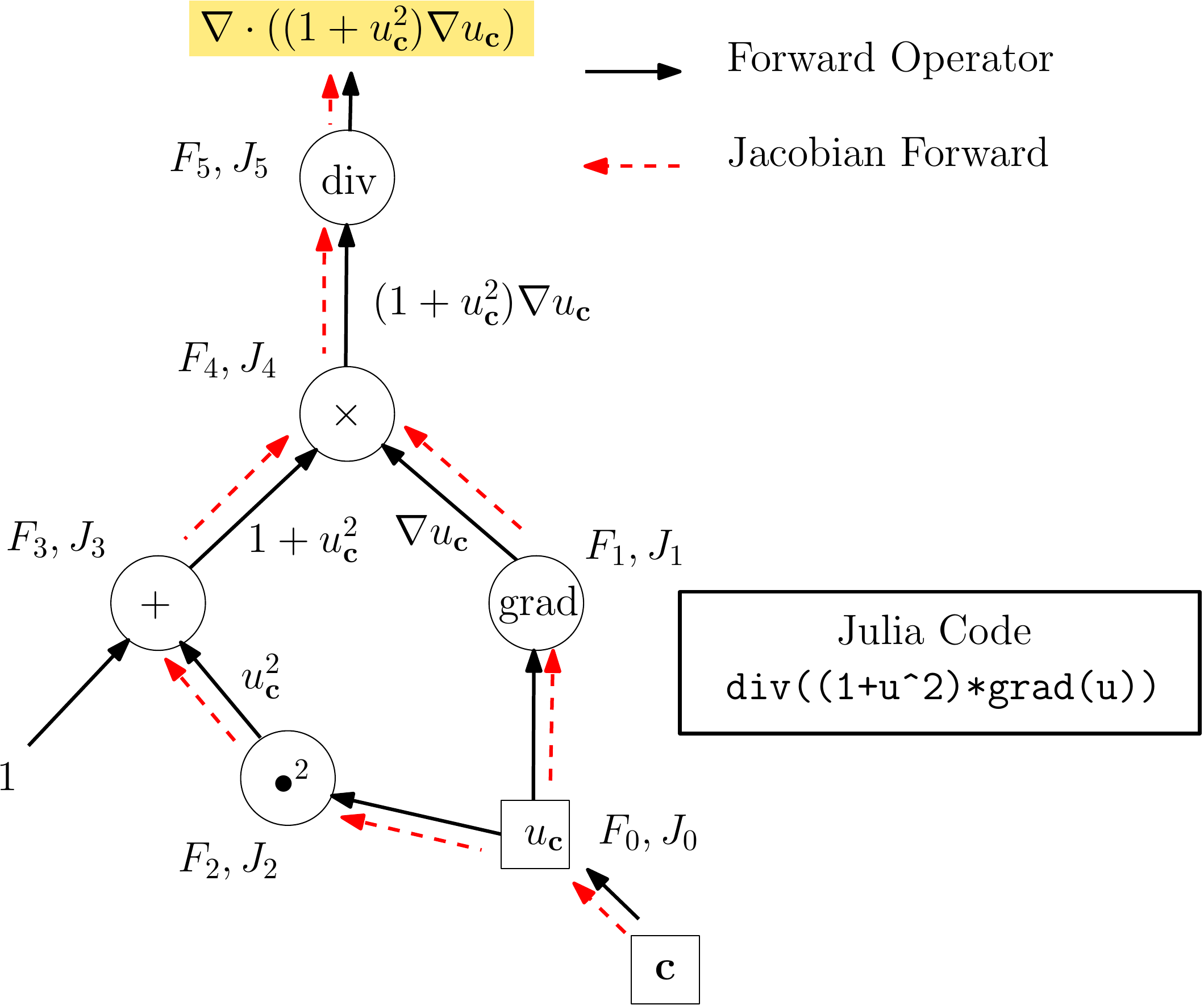}
  \caption{Jacobian forward propagation in the computational graph. The Jacobian computation is automatic: the users only need to specify the forward operations.}
  \label{fig:graph}
\end{figure}

The corresponding differentiation rule for each operator is 
\begin{align*}
	{J_0(\mathbf{c})} =&\; \frac{\partial}{\partial \mathbf{c}}F_0(\mathbf{c}) = M\\
	{J_1(\mathbf{c})} =&\; \frac{\partial}{\partial \mathbf{c}} F_1(\mathbf{c}) = [M_x{J_0(\mathbf{c})} \ M_y{J_0(\mathbf{c})}] \\
	{J_2(\mathbf{c})} = &\; \frac{\partial}{\partial \mathbf{c}} F_2(\mathbf{c}) =2\mathrm{diag}(F_0(\mathbf{c})) {J_0(\mathbf{c})} \\
	{J_3(\mathbf{c})} =&\; \frac{\partial}{\partial \mathbf{c}} F_3(\mathbf{c}) ={J_2(\mathbf{c})} \\
	{J_4(\mathbf{c})} = &\; \frac{\partial}{\partial \mathbf{c}} F_4(\mathbf{c}) = 2\mathrm{diag}(F_3(\mathbf{c})) {J_1(\mathbf{c})} \\
	& + [\mathrm{diag}(M_xF_0(\mathbf{c})){J_3(\mathbf{c})} \ \mathrm{diag}(M_yF_0(\mathbf{c})){J_3(\mathbf{c})}]\\
     {J_5(\mathbf{c})} =&\; \frac{\partial}{\partial \mathbf{c}} F_5(\mathbf{c}) = {J_4(\mathbf{c})}\begin{bmatrix}
		M_x\\
		M_y
	\end{bmatrix}
\end{align*}
Here $\mathrm{diag}(\mathbf{v})$ denotes the diagonal matrix whose diagonal entries are $\mathbf{v}$. By chaining together all the differentiation rules, $J_5(\mathbf{c}) = J(\mathbf{c})$ is the desired Jacobian. We can see the dependency of $J_k(\mathbf{c})$, $k=1$, \dots, $5$ is the same as the forward modeling, which also explains why the dashed red arrows (Jacobian Forward) in \Cref{fig:graph} parallel the forward computation. The order of the computation is different from reverse mode automatic differentiation in \Cref{equ:s2}. Reverse mode automatic differentiation is most efficient when the final output is a scalar value. $F(u, \theta)$ is a multiple input multiple output operator and therefore computing $\frac{\partial F(u, \theta)}{\partial u}$ using reverse mode automatic differentiation is inefficient. Nevertheless, $F(u, \theta)$ is composed of differential operators and our representation of the solution $u$ consists of local basis functions. This locality enables us to obtain an efficient automatic Jacobian calculation algorithm that preserves the sparsity.

\section{Automatic Jacobian Calculation Implementation in\\\texttt{IGACS.jl}}\label{sect:detail}

It is useful to have an understanding of how we implement automatic Jacobian calculation in \texttt{IGACS.jl}. The implementation can also be extended to other basis functions or weak formulations such as finite difference method, finite element method, and finite volume method. It is also possible to extend to global basis functions by back-propagating special structured matrices instead of sparse Jacobian matrices. 

The automatic Jacobian calculation implementation is based on\\\texttt{ADCME.jl}~\cite{adcme}, which leverages \texttt{TensorFlow} \cite{abadi2016tensorflow_proc,abadi2016tensorflow_art} for graph optimization and numerical acceleration (e.g., XLA, CUDA). The computational graph parsing and bookkeeping are done in \texttt{Julia}~\cite{DBLP:journals/corr/abs-1209-5145}, which does not affect runtime performance. \texttt{ADCME.jl} also implements a sparse linear algebra custom operator library to augment the built-in \texttt{tf.sparse} in \texttt{TensorFlow}. In sum, we have made our best effort to attain high performance of \texttt{IGACS.jl} by leveraging \texttt{TensorFlow} and customizing performance critical computations. 

One important concern for designing \texttt{IGACS.jl} is the accessibility to users. We hide the details of gradients back-propagation or Jacobian for\-ward-propagation by abstraction via the structure \texttt{Coefficient}, which holds the data of the NURBS representation and tracing information in the computational graph. For example, to compute $J(c) = \nabla \cdot ((1+c^2)\nabla c)$, users can simply write
\[
\texttt{J} = \texttt{div}((1+\texttt{c}^2)*\texttt{grad}(\texttt{c}))    
\]
This will build the computational graph automatically. For computing Jacobian, users only need to write
\[
\texttt{deriv}(\texttt{J}, \texttt{c})
\]
This will parse the dependency of the computational graph and return a sparse matrix (\texttt{SparseTensor} struct in \texttt{ADCME}) at the Greville abscissae (other collocation points can be specified by passing an augment). 

As an example, let the loss function be
\begin{equation*}
    L(\mu) =\int_{\Omega} \nabla \cdot ((1+\mu c^2)\nabla c)d\bx,\quad \mu\in\RR
\end{equation*}
To compute the gradient $\frac{\partial L}{\partial \mu}$, we need to write
\begin{align*}
&\texttt{J} = \texttt{div}((1+\mu * \texttt{c}^2)*\texttt{grad}(\texttt{c}))    \\
&\texttt{g} = \texttt{gradients}(\texttt{J}, \mu)
\end{align*}

\section{Computing the Gradient with AD}\label{sect:trick}

Having introduced the technical tools AD and automatic Jacobian calculation, we can describe the numerical procedure for computing \Cref{equ:s2}
\begin{equation}\label{equ:s3}
    \frac{{\partial {{\tilde L}_h}(\theta )}}{{\partial \theta }} =  - \frac{{\partial {L_h}({u_h})}}{{\partial {u_h}}}{\left( {\frac{{\partial {F_h}}}{{\partial {u_h}}}\Bigg|_{u_h = {G_h}(\theta )}} \right)^{ - 1}}\frac{{\partial {F_h}}}{{\partial \theta }}(\theta ,{G_h}(\theta ))
\end{equation}
For efficiency, we do \textit{not} compute 
\[ \frac{{\partial {L_h}({u_h})}}{{\partial {u_h}}},\qquad{\left( {\frac{{\partial {F_h}}}{{\partial {u_h}}}\Bigg|_{u_h = {G_h}(\theta )}} \right)^{ - 1}},\qquad\frac{{\partial {F_h}}}{{\partial \theta }}(\theta ,{G_h}(\theta )) \]
separately. Instead, we take advantage of reverse mode automatic differentiation and sparse linear algebra. 

First, we compute $\frac{{\partial {L_h}({u_h})}}{{\partial {u_h}}}$. Since $L_h:\RR^n\rightarrow \RR$, we can apply the reverse mode automatic differentiation to compute the gradients.

Next, computing
\[ 
    x = \frac{{\partial {L_h}({u_h})}}{{\partial {u_h}}}{\Big( {\frac{{\partial {F_h}}}{{\partial {u_h}}}\Bigg|_{u_h = {G_h}(\theta )}} \Big)^{ - 1}} 
\]
is equivalent to solving a linear system
\begin{equation}\label{equ:sparse}
    x \; 
    \frac{{\partial {F_h}}}{{\partial {u_h}}}\Bigg|_{u_h = {G_h}(\theta )} 
    = \frac{{\partial {L_h}({u_h})}}{{\partial {u_h}}}
\end{equation}
The term
\[ 
    {\frac{{\partial {F_h}}}{{\partial {u_h}}}\Bigg|_{u_h = {G_h}(\theta )}}
\] 
is computed with automatic Jacobian calculation and is sparse. Therefore, we can solve the linear system \Cref{equ:sparse} with a sparse linear solver. 

Consequently, we have according to \Cref{equ:s3} (\revise{in principle, $x$ depends on $\theta$; but in the equation below, we treat $x$ as independent of $\theta$}) 
\begin{equation}\label{equ:s5}
    \frac{{\partial {{\tilde L}_h}(\theta )}}{{\partial \theta }}  = -x\frac{{\partial {F_h}}}{{\partial \theta }}(\theta , u_h)\Big|_{u_h = G_h(\theta)} = \frac{\partial \Big(-x F_h\big(\theta, u_h\big)\Big)}{\partial \theta}\Bigg|_{u_h = G_h(\theta)}
\end{equation}
Note that 
\begin{equation*}
    \theta \mapsto  -x^T F_h(\theta, u_h)
\end{equation*}
is a mapping from $\RR^d$ to $\RR$, we can again apply the reverse mode automatic differentiation to compute the gradients \Cref{equ:s2}. 

The following code snippet shows a possible implementation of the discussion above
\begin{verbatim}
    l  = L(u)
    r  = F(theta, u)
    g  = gradients(l, u)
    x  = dF'\g
    x  = independent(x)
    dL = -gradients(sum(r*x), theta)
\end{verbatim}
here \texttt{dF} is the sparse Jacobian computed with automatic Jacobian calculation. \revise{Here {\tt independent} is the programmatic way of treating $x$ as independent of $\theta$.} 

\begin{figure}[hbt]
    \centering
    \includegraphics[width=0.8\textwidth]{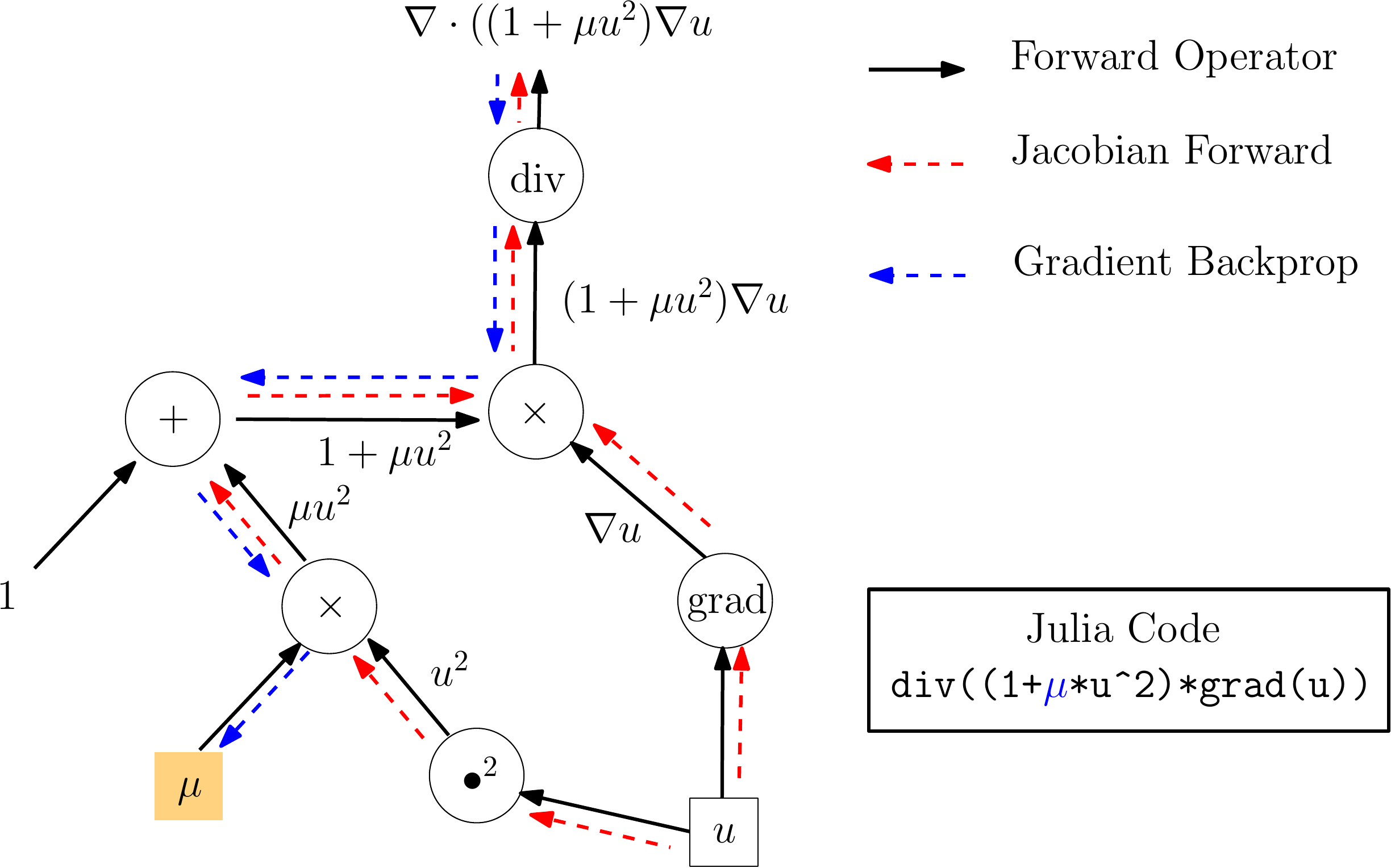}
    \caption{Forward-Backward pattern in constraint enforcement method. In each iteration, a forward simulation is performed first (black and red arrows). A gradient back-propagation (blue arrows) follows and updates the unknown parameter $\mu$.}
    \label{fig:forward-backward}
\end{figure}

In the inverse modeling optimization process, for each iteration, we need a forward simulation where we populate each edge with intermediate data and Jacobian matrices. Then a gradient back-propagation follows and updates the unknown parameters (\Cref{fig:forward-backward}). This forward-backward pattern also exists in the penalty method, but no Jacobian information is required.

\end{appendix}

\newpage

\bibliographystyle{unsrt}
\bibliography{IMSO}

\end{document}